\newtheorem{theorem}{Theorem}
\newtheorem{proposition}[theorem]{Proposition}
\newtheorem{lemma}[theorem]{Lemma}
\newtheorem{remark}{Remark}
\newtheorem{definition}{Definition}
\newtheorem{assumption}{Assumption}
\DeclareMathOperator{\Prox}{Prox}
\DeclareMathOperator{\Proj}{Proj}
\DeclareMathOperator{\gph}{gph}
\DeclareMathOperator*{\argmin}{argmin}
\newcommand{\vp}{\varphi}
\newcommand{\R}{\mathbb{R}}
\newcommand{\Rb}{\overline{\mathbb{R}}}
\newcommand{\Mb}{m_{\mathrm{upper}}}
\newcommand{\mb}{m_{\mathrm{lower}}}
\begin{document}
\title{Coderivative-Based Newton Methods with Wolfe Linesearch\\ for Nonsmooth Optimization}
\author{
	Miantao Chao\footnote{College of Mathematics and Information Science, Guangxi University, Nanning, 530004, China. E-mail: chaomiantao@126.com. Research of this author was supported by National Science Foundation of China 12061013.} 
\and
Boris S. Mordukhovich\footnote{Department of Mathematics, Wayne State University, Detroit, Michigan, USA. E-mail: aa1086@wayne.edu. Research of this author was partly supported by the US National Science Foundation under grant DMS-2204519, by the Australian Research Council under Discovery Project DP-190100555, and by Project~111 of China under grant D21024.}
\and
Zijian Shi\footnote{College of Mathematics and Information Science, Guangxi University, Nanning, 530004, China. E-mail: zijianshi@foxmail.com.} 
\and 
Jin Zhang\footnote{Department of Mathematics, Southern University of Science and Technology, National Center for Applied Mathematics Shenzhen, Shenzhen, 518055, China. E-mail: zhangj9@sustech.edu.cn. Research of this author was supported by National Natural Science Foundation of China (12222106, 12326605), Guangdong Basic and Applied Basic Research Foundation (No. 2022B1515020082).}
}
\date{}
\maketitle

\abstract{This paper introduces and develops novel coderivative-based Newton methods with Wolfe linesearch conditions to solve various classes of problems in nonsmooth optimization. We first propose a generalized regularized Newton method with Wolfe linesearch (GRNM-W) for unconstrained $C^{1,1}$ minimization problems (which are second-order nonsmooth) and establish global as well as local superlinear convergence of their iterates. To deal with convex composite minimization problems (which are first-order nonsmooth and can be constrained), we combine the proposed GRNM-W with two algorithmic frameworks: the forward-backward envelope and the augmented Lagrangian method resulting in the two new algorithms called CNFB and CNAL, respectively. Finally, we present numerical results to solve Lasso and support vector machine problems appearing in, e.g., machine learning and statistics, which demonstrate the efficiency of the proposed algorithms.}

{\bf Keywords}: nonsmooth optimization, variational analysis, nonsmooth Newton methods, Wolfe linesearch, Lasso problems, support vector machines\vspace*{-0.15in}

\section{Introduction}\label{intro}\vspace*{-0.05in}

The classical Newton method is an efficient second-order algorithm for unconstrained \(C^2\)-smooth minimization problems, with superlinear or quadratic local convergence when the Hessian of the objective function is positive-definite around a solution. Nevertheless, it has some serious drawbacks such as lack of global convergence, high computational cost, and restricted applicability. Accordingly, many modifications of Newton's method have been proposed, including damped Newton method, regularized Newton method, quasi-Newton methods, trust-region Newton methods, cubic regularized Newton method, etc. For such Newton-type methods, second-order smoothness of the objective function is required either in the problem formulation, or in the convergence analysis. However, in many natural models, the objective function is not second-order differentiable while we still want to utilize some generalized second-order derivatives to design {\em nonsmooth} Newton algorithms with the hope that they retain the fast convergence of Newton's method. With that in mind, we first focus our attention on the class of unconstrained \(C^{1,1}\) minimization problems, where the objective functions are continuously differentiable with locally Lipschitz continuous gradients. In this way, several nonsmooth Newton methods employing different generalized differentiation constructions have been proposed and developed in the literature. The most popular by far generalized Newton method is known as the {\em semismooth Newton method} (SNM), which primarily addresses, along with its various modifications, to solving Lipschitzian gradient equations that arise, e.g., from stationary conditions for minimizing ${\cal C}^{1,1}$ functions as well as systems that can be reduced to this framework. The main analytic tools in SNM and its versions are {\em generalized Jacobians} by Clarke \cite{clarke1990optimization}. Among an enormous amount of publications on SNM and its modifications, we refer the reader to the books \cite{facchinei2003finite,Izmailov2014newton,kk-book} and the bibliographies therein for a variety of results, discussions, and historical comments. Due to the well-recognized limitations of SNM, discussed in the aforementioned publications, the search of other nonsmooth Newtonian methods has been undertaken over the years. The reader can find more information on such developments in the fresh book \cite{m24}.

This paper develops a quite recent direction in variational theory and applications of nonsmooth Newton methods with algorithms constructed by employing {\em coderivatives} by Mordukhovich \cite{mordukhovich2006variational} instead of generalized Jacobians by Clarke. Newtonian iterations to find stationary points and local minimizers are defined in this way by using coderivative-based {\em second-order subdifferentials/generalized Hessians} of objective functions in the sense of \cite{mordukhovich1992sensitivity,mordukhovich2006variational}. The latter constructions of second-order variational analysis enjoy comprehensive calculus rules and admit explicit calculations in terms of the given data for broad classes of nonsmooth functions that overwhelmingly appear in various settings of optimization and its applications to machine learning, data science, statistics, biochemical modeling, etc.; see \cite{m24} for more details and references. A variety of coderivative-based Newtonian algorithms have been developed and applied in \cite{aragon,duy2023generalized,khanh2022generalized,khanh2023globally,
m24,mordukhovich2021generalized} with establishing their local and global convergence, convergence rates, and applications to practical modeling. To ensure {\em global convergence} of the generalized damped Newton method in  \cite{duy2023generalized,khanh2022generalized} and the generalized regularized Newton method in \cite{khanh2023globally}, together with their algorithmic implementations for various classes of optimization problems, the backtracking {\em Armijo linesearch} has been widely employed.

In this paper, we first propose and justify a generalized regularized Newton method with the {\em Wolfe linesearch} (GRNM-W) for unconstrained $C^{1,1}$ minimization problems. Its iterative procedure is given in the following form:
\[x^{k+1}=x^k+\tau_k d^k\;\text{ with }\;-\nabla\vp(x^k)\in \partial^2\vp(x^k)(d^k)+\mu_k B_k d^k,\]
where \(\tau_k\) is a stepsize determined by a linesearch that satisfies the Wolfe conditions, where \(\partial^2\vp(x^k):=(D^*\nabla\vp)(x^k)\) is Mordukhovich's generalized Hessian (i.e., coderivative of the gradient mapping), where \(\mu_k>0\) is a regularization parameter, and where $B_k\succ 0$ is a regularization matrix.
GRNM-W has two {\em crucial differences} from the generalized regularized Newton method (GRNM) in \cite{khanh2023globally}. On one hand, GRNM-W uses a Wolfe linesearch strategy instead of the backtracking linesearch. On the other hand, GRNM-W incorporates a more general regularization matrix than GRNM (which uses the identity matrix).
We establish {\em global convergence} and {\em local superlinear convergence rates} of GRNM-W under the same assumptions as in \cite{khanh2023globally}. Moreover, we present an appropriately {\em modified version} of GRNM-W so that it can be applied to arbitrary nonconvex functions. Global convergence with convergence rates of the modified GRNM-W are established for general nonconvex functions satisfying the {\em Kurdyka-\L ojasiewicz} (KL) property. To the best of our knowledge, the obtained results are new for nonsmooth Newton-type methods. We show that the Wolfe linesearch is {\em more efficient} than the backtracking Armijo linesearch employed in \cite{khanh2023globally} and other nonsmooth Newton methods mentioned above, especially when evaluations of gradients are not too expensive and the starting point is far enough from a solution. One particularly impressive property of the Wolfe linesearch for Newton-type methods is that it allows us to choose {\em larger-than-unit stepsizes} in the initial stage of the iteration.

Starting with unconstrained problems of $C^{1,1}$ optimization, we then extend our GRNM-W method to significantly more general classes of {\em constrained} optimization problems, which may be even {\em first-order nonsmooth}. Of our primary interest here are problems of {\em convex composite minimization}, where one of the functions in summation is extended-real-valued and hence incorporates constraints. We combine the coderivative-based GRNM-W with the two algorithmic frameworks: {\em forward-backward envelope} from  \cite{patrinos2013proximal} and {\em augmented Lagrangian method} from \cite{rockafellar1976augmented}. These combinations lead us to the  two new algorithms called CNFB (coderivative-based Newton forward-backward method) and CNAL (coderivative-based Newton augmented Lagrangian method). The main difference of CNFB from GRNM in  \cite{khanh2023globally} is the usage of the Wolfe linesearch instead of the Armijo one, while CNAL is different from SSNAL (semismooth Newton augmented Lagrangian method) in \cite{li2018efficiently} by using coderivatives instead of generalized Jacobians and the Wolfe conditions instead of the Armijo linesearch. We present applications of our results to {\em support vector machines} and {\em Lasso problems} with numerical experiments that demonstrate the efficiency of our algorithms.

The remaining parts of the paper are organized as follows. In
Section~\ref{sec_pre}, we review preliminaries from variational analysis including generalized differentiation constructions and the semismooth and semismooth$^*$ properties. Section~\ref{sec_reg_Wolfe} proposes and develops a generalized regularized Newton method with the Wolfe
linesearch (GRNM-W) for $C^{1,1}$ functions whose generalized Hessians are positive-semidefinite. Global convergence results  with local superlinear convergence rates are derived in this section. A modified version of GRNM-W is given in Section~\ref{sec_KL} in such a way that it is applicable to arbitrary nonconvex $C^{1,1}$ functions without any positive-semidefiniteness assumptions whatsoever. We then verify global convergence and convergence rates for the modified GRNM-W assuming the KL property of the objective function. Our study of convex composite minimization problems begins in Section~\ref{sec_convex composite}, where we propose the CNFB algorithm by combining GRNM-W with the forward-backward envelope. In Section~\ref{sec_CNAL}, we propose the CNAL algorithm by embedding GRNM-W in the augmented Lagrangian method. Applications of the obtained results to support vector machines and Lasso problems together with the corresponding numerical experiments are reported in Section~\ref{sec_svm} and Section~\ref{sec_numerical}, respectively. Finally,  Section~\ref{sec_conclusions} presents concluding remarks and discusses some directions of our future research.

Our notations are standard. Recall that \(\mathbb{N}:=\{0,1,\ldots\}\), \(\mathbb{R}\) is the field of real numbers, and \(\Rb:=\mathbb{R}\cup\{\infty\}\). The distance between \(x\in\mathbb{R}^n\) and a nonempty set \(\Omega\subset\mathbb{R}^n\) is defined by \(\mathrm{dist}(x,\Omega):=\inf\{\|x-y\|\;|\;y\in\Omega\}\). The symbol \(\mathbb{B}_{\delta}(\bar{x})=\{x\in\mathbb{R}^n\;|\;\|x-\bar{x}\|<\delta\}\) stands for the open ball centered at \(\bar{x}\) with radius \(\delta\). For a symmetric matrix \(A\in\mathbb{R}^{n\times n}\), the notation \(A\succ 0\) means that \(A\) is positive-definite. The norm defined by such a matrix $A$ is denoted by $\|x\|_A:=\sqrt{\braket{x,Ax}}$.\vspace*{-0.15in}

\section{Preliminaries from Variational Analysis}\label{sec_pre}\vspace*{-0.05in}

Here we overview some well-known notions and results of variational analysis broadly employed in the paper; see the books \cite{mordukhovich2006variational,mordukhovich2018variational,m24,
rockafellar2009variational} for more details and related material.

For a nonempty set \(\Omega \subset \mathbb{R}^n\), the (Fr\'echet) {\em regular normal cone} to \(\Omega\) at \(\bar{x}\in\Omega\) is
\begin{equation}\label{rnc}
\widehat{N}_{\Omega}(\bar{x}):=\Big\{v\in\mathbb{R}^n\;\Big|\;\limsup_{x\overset{\Omega}{\to}\bar{x}}\frac{\braket{v,x-\bar{x}}}{\|x-\bar{x}\|}\le 0\Big\},
\end{equation}
where the symbol \(x\overset{\Omega}{\to}\bar{x}\) indicates that \(x\to\bar{x}\) with \(x\in\Omega\). The (Mordukhovich) {\em basic/limiting normal cone} to \(\Omega\) at \(\bar{x}\in\Omega\) is defined by
\begin{equation}\label{lnc}
N_\Omega(\bar x):=\big\{v\in\mathbb{R}^n\;\big|\;\exists\,x_k\overset{\Omega}{\to}\bar{x},v_k\to v\;\text{ as }\;k\to\infty\;\text{ with }\;v_k\in\widehat{N}_\Omega(x_k)\big\}.
\end{equation}
Note that the set $N_\Omega(\bar x)$ is often nonconvex as, e.g., for $\Omega:=\{(x,\alpha)\in\R^2\;|\;\alpha\ge -|x|\}$ at $\bar x=0$. Thus \eqref{lnc} cannot be generated in duality by any tangential approximation of $\Omega$ at $\bar x$ since duality always yields convexity. Nevertheless, the limiting normal cone \eqref{lnc} and the corresponding coderivative and subdifferential constructions enjoy {\em full calculus} based on {\em variational/extremal principles}.

Consider a set-valued mapping/multifunction \(F:\mathbb{R}^n \rightrightarrows \mathbb{R}^m\) with the graph $\gph F:=\{(x,y)\in\mathbb{R}^n\times\mathbb{R}^m\;|\;\,y\in F(x)\}$. Generated by the corresponding normal cone in \eqref{rnc} and \eqref{lnc}, the {\em regular coderivative} and {\em limiting coderivative} of $F$ at \((\bar{x},\bar{y})\in\gph F\) are defined, respectively, by
\begin{equation}\label{reg-cod}
\widehat{D}^*F(\bar{x},\bar{y})(v):=\big\{u \in \mathbb{R}^n\;\big|\;(u,-v)\in\widehat{N}_{\gph F}(\bar{x},\bar{y})\big\},\quad v\in\mathbb{R}^m,
\end{equation}
\begin{equation}\label{lim-cod}
D^*F(\bar{x},\bar{y})(v):=\big\{u \in \mathbb{R}^n\;\big|\;(u,-v)\in N_{\gph F}(\bar{x},\bar{y})\big\},\quad v\in\mathbb{R}^m.
\end{equation}
When $F\colon\R^n\to\R^m$ is single-valued, we omit {\(\bar{y}=F(\bar{x})\)} in the notations \eqref{reg-cod} and \eqref{lim-cod}. Recall further that a multifunction \(F:\mathbb{R}^n\rightrightarrows \mathbb{R}^m\) is {\em metrically regular} at \((\bar{x},\bar{y})\in\gph F\) if
there exist a positive constant \(\mu>0\) and neighborhoods \(U\) of \(\bar{x}\) and \(V\) of \(\bar{y}\) such that
\begin{equation}\label{mertreg}
\mathrm{dist}\left(x;F^{-1}(y)\right)\le\mu\,\mathrm{dist}\left(y;F(x)\right)\;\mbox{ for all }\;x,y)\in U\times V.
\end{equation}

Given now an extended-real-valued function \(\vp:\mathbb{R}^n\to\Rb\) with ${\rm dom}\,\vp:=\{x\in\R^n\;|\;\vp(x)<\infty\}$, define the {\em limiting subdifferential} of $\vp$ at \(\bar{x} \in \mathrm{dom}\,\vp\) geometrically by
\begin{equation}\label{sub}
\partial\vp(\bar{x}):=\big\{v\in\mathbb{R}^n\;\big|\;(v,-1)\in N_{\mathrm{epi}\,\vp}\left(\bar{x},\vp(\bar{x})\right)\big\}
\end{equation}
while observing that \eqref{sub} admits various analytic representations that can be found in the aforementioned books. The {\em second-order subdifferential} (or {\em generalized Hessian}) of $\vp$  at \(\bar{x}\in \mathrm{dom}\,\vp\) for \(\bar{v}\in\partial\vp(\bar{x})\) is defined in \cite{mordukhovich1992sensitivity} as the coderivative of the subgradient mapping \(\partial^2\vp(\bar{x},\bar{v}):\mathbb{R}^n\rightrightarrows\mathbb{R}^n\) by
\begin{equation}\label{2nd}
\partial^2\vp(\bar{x},\bar{v})(u):=\big(D^*\partial\vp\big)(\bar{x},\bar{v})(u)\;\mbox{ whenever }\;u\in\mathbb{R}^n.
\end{equation}
Note that if $\vp$ is a $C^2$-smooth function around $\bar x$, then \eqref{2nd} reduces to the classical Hessian matrix $\partial^2\vp(\bar{x})(u)=\{\nabla^2\vp(\bar x)(u)\}$ for all $u\in\R^n$.
Over the years, extensive calculus rules, explicit calculations, and a variety of applications have been obtained in terms of \eqref{2nd}, which have been summarized in the recent book \cite{m24} with the numerous references therein.

Among the striking applications of \eqref{2nd}, we mention {\em complete characterizations} in its terms several notions of {\em variational stability} in finite and infinite dimensions. It concerns, in particular, the following major stability notion in optimization introduced in \cite{poliquin1998tilt},

\begin{definition}[\bf tilt stability]\label{tilt} {\rm
For \(\vp:\mathbb{R}^n\to\Rb\), a point \(\bar{x}\in \mathrm{dom}\,\vp\) is called a {\em tilt-stable local minimizer} of \(\vp\) if there exists
a number \(\gamma>0\) such that the mapping
\[M_{\gamma}:v\mapsto \mathrm{argmin}\big\{\vp(x)-\braket{v,x}\;\big|\;\,x\in \mathbb{B}_{\gamma}(\bar{x})\big\}\]
is single-valued and Lipschitz continuous on some neighborhood of \(0\in\mathbb{R}^n\) with \(M_{\gamma}(0)=\{\bar{x}\}\).
A Lipschitz constant of \(M_{\gamma}\) around \(0\) is called a {\em modulus of tilt stability} of \(\vp\) at \(\bar{x}\).}
\end{definition}

The next notion of semismoothness was introduced in \cite{mifflin1977semismooth} for real-valued functions and then extended to vector-valued functions in \cite{kummer,qi1993nonsmooth} with applications to generalized Newton's methods for directionally differentiable Lipschitz functions.

\begin{definition}[\bf semismoothness]\label{semi-lip}
{\rm A mapping \(f:\mathbb{R}^n \to \mathbb{R}^m\) is said to be {\em semismooth} at \(\bar{x}\) if it is locally Lipschitz continuous around \(\bar{x}\) and  the limit
\begin{equation}
\lim_{\substack{A\in \mathrm{co}\,\overline{\nabla}f(\bar{x}+tu')\\u'\to u,t\downarrow 0}}Au'
\end{equation}
exists for all \(u\in\mathbb{R}^n\), where `co' stands for the convex hull of the set in question, and where
\[\overline{\nabla}f(x):=\big\{A\in\mathbb{R}^{m\times n}\;\big|\;\exists\, x_k \overset{\Omega_f}{\to}x\;\mbox{ with }\;\nabla f(x_k)\to A\big\}\]
with \(\Omega_f\) signifying the set of points at which \(f\) is differentiable.}
\end{definition}

Note that the set $\overline{\nabla}f(x)$ above is also written as $\partial_B f(x)$ and is called the {\em B-subdifferential} of $f$ at $x$. The convex hull $\mathrm{co}\,\overline{\nabla}f(x)$ is also written as $\partial_C f(x)$ and is called the (Clarke) {\em generalized Jacobian} of $f$ at $x$. Observe the relationship (with $A^T$ standing for the matrix transposition)
\begin{equation*}
{\rm co}\,D^*f(\bar x)(v)=\big\{A^T\;\big|\;A\in\partial_C f(\bar x)\big\},\quad v\in\R^m,
\end{equation*}
between the limiting coderivative \eqref{lim-cod} and the generalized Jacobian of $f$ at $\bar x$, which is valid for any mapping $f\colon\R^n\to\R^m$ locally Lipschitzian around $\bar x$. The following characterization of semismoothness is taken from  \cite[Theorem~2.3]{qi1993nonsmooth}.

\begin{proposition}[\bf characterization of semismoothness]\label{semi-char} Let $f\colon\R^n\to\R^m$ be locally Lipschitzian around $\bar x$. Then $f$ is  semismooth at \(\bar{x}\) if and only if it is directionally differentiable at \(\bar{x}\) in every direction and for any \(x\to\bar{x}\) and \(A\in\partial_C f(\bar x)\) we have the condition
\begin{equation}\label{SAP}
f(x)-f(\bar{x})-A(x-\bar{x})=o(\|x-\bar{x}\|).
\end{equation}
\end{proposition}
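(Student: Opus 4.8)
The plan is to prove the two implications of the Qi--Sun characterization by \emph{identifying the semismoothness limit with the directional derivative} of $f$. Throughout I would use the standard facts about Clarke's generalized Jacobian $\partial_C f=\mathrm{co}\,\overline{\nabla}f$ for a mapping Lipschitzian with constant $L$ near $\bar x$: by Rademacher's theorem $f$ is differentiable a.e., the map $x\mapsto\partial_C f(x)$ is nonempty-convex-compact-valued, upper semicontinuous, and bounded by $L$ near $\bar x$, and the semismoothness of Definition~\ref{semi-lip} asserts that for every $u$ the value $V(u):=\lim Au'$ taken over $A\in\partial_C f(\bar x+tu')$, $u'\to u$, $t\downarrow 0$, exists. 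I would read the right-hand condition with the generalized Jacobian taken at the \emph{moving} point $x$ (that is, $A\in\partial_C f(x)$), which is the form under which the equivalence actually holds. Two elementary tools recur: the Lipschitz estimate $\|f(\bar x+tu_1)-f(\bar x+tu_2)\|\le Lt\|u_1-u_2\|$, which lets me replace a varying direction $u_k\to u$ by the fixed $u$, and, for the scalar reduction $g(t):=f(\bar x+tu)$, the chain rule giving $g'(t)=A(t)u$ for some $A(t)\in\partial_C f(\bar x+tu)$ a.e., together with the fundamental theorem of calculus $g(t)-g(0)=\int_0^t g'(s)\,ds$ (valid since $g$ is Lipschitz).

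For the implication ``semismooth $\Rightarrow$ \eqref{SAP}'' I would first produce the directional derivative. Fixing $u$ and using the scalar reduction, semismoothness (with $u'=u$) gives $A(t)u\to V(u)$ as $t\downarrow 0$, so averaging yields
\[\frac{f(\bar x+tu)-f(\bar x)}{t}=\frac1t\int_0^t A(s)u\,ds\longrightarrow V(u),\]
proving directional differentiability with $f'(\bar x;u)=V(u)$. I would then obtain \eqref{SAP} by contradiction: if it failed there would exist $x_k\to\bar x$, $A_k\in\partial_C f(x_k)$ and $\ve>0$ with $\|f(x_k)-f(\bar x)-A_k(x_k-\bar x)\|\ge\ve\|x_k-\bar x\|$. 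Writing $t_k:=\|x_k-\bar x\|\downarrow 0$ and $u_k:=(x_k-\bar x)/t_k$ on the unit sphere, I would pass to a subsequence with $u_k\to u$ by compactness. Semismoothness gives $A_ku_k\to V(u)$, while the Lipschitz estimate together with directional differentiability along $u$ gives $(f(x_k)-f(\bar x))/t_k\to V(u)$; subtracting contradicts the lower bound.

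For the converse I would assume directional differentiability and \eqref{SAP} and verify that the semismoothness limit exists. Taking any $t_k\downarrow 0$, $u_k\to u$ and $A_k\in\partial_C f(\bar x+t_ku_k)$, I set $x_k:=\bar x+t_ku_k\to\bar x$; then \eqref{SAP} gives $(f(x_k)-f(\bar x))/t_k-A_ku_k\to 0$, while the same Lipschitz-plus-directional-derivative argument gives $(f(x_k)-f(\bar x))/t_k\to f'(\bar x;u)$, whence $A_ku_k\to f'(\bar x;u)$. Since this common limit is independent of the chosen sequence, the limit defining semismoothness exists and equals $f'(\bar x;u)$.

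The main obstacle is the first half of the forward direction: extracting genuine directional differentiability from the mere existence of the semismoothness limit. This is exactly where the chain rule $g'(t)\in\{Au\mid A\in\partial_C f(\bar x+tu)\}$ and the averaging via the fundamental theorem of calculus are essential, since semismoothness controls only $Au'$ along the approach and not the increment $f(\bar x+tu)-f(\bar x)$ directly. A secondary technical point, used in both directions, is the passage from the fixed direction $u$ to the varying unit vectors $u_k$; this transfers the directional derivative uniformly only because of the Lipschitz bound near $\bar x$, without which the subsequence extraction on the sphere would be of no use.
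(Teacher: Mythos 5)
Your proof is correct. Note first that the paper does not actually prove this proposition: it is quoted from \cite[Theorem~2.3]{qi1993nonsmooth} without argument, so the only meaningful comparison is with the classical Qi--Sun proof, and yours is essentially a faithful reconstruction of it. The two pillars you isolate are exactly the ones used there: (i) for the forward direction, the scalar reduction $g(t)=f(\bar x+tu)$, the chain-rule-type inclusion $g'(t)\in\{Au\;|\;A\in\partial_C f(\bar x+tu)\}$ at a.e.\ $t$ (justifiable either by Clarke's chain rule or by the mean-value inclusion combined with upper semicontinuity and compactness of $\partial_C f$), and the averaging $g(t)-g(0)=\int_0^t g'(s)\,ds$, which converts existence of the semismoothness limit $V(u)$ into directional differentiability with $f'(\bar x;u)=V(u)$; (ii) the Lipschitz estimate to transfer increments from a fixed direction $u$ to varying unit directions $u_k\to u$, plus compactness of the unit sphere, which drives both the contradiction argument for \eqref{SAP} and the converse. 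All the limit manipulations (uniform form of the semismoothness limit, sequential characterization in the converse) are handled correctly.

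One point deserves emphasis: you are right to insist on reading \eqref{SAP} with $A\in\partial_C f(x)$ at the \emph{moving} point rather than $A\in\partial_C f(\bar x)$ as literally printed. This is not a cosmetic choice. As printed, the condition would give $(A-B)(x-\bar x)=o(\|x-\bar x\|)$ for all $A,B\in\partial_C f(\bar x)$, forcing $\partial_C f(\bar x)$ to be a singleton; for instance $f(x)=|x|$ is semismooth at $0$ yet fails the printed condition with $A=0\in\partial_C f(0)=[-1,1]$. So the statement as it appears in the paper contains a typo, and your corrected reading is the one for which the Qi--Sun equivalence genuinely holds.
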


To proceed further, recall that the {\em directional normal cone} to a set \(\Omega\subset\mathbb{R}^s\) at \(\bar{z}\in\Omega\) in the direction \(d\in\mathbb{R}^s\) is introduced in \cite{ginchev2010directionally} by
\begin{equation*}
N_{\Omega}(\bar{z};d):=\big\{v\in\mathbb{R}^s\;\big|\;\exists\,t_k\downarrow 0,\;d_k\to d,\;v_k\to v \;\textrm{ with }\;
v_k\in\widehat{N}_{\Omega}(\bar{z}+t_kd_k)\big\}.
\end{equation*}
The {\em directional coderivative} of \(F:\mathbb{R}^n\rightrightarrows\mathbb{R}^m\) at \((\bar{x},\bar{y})\in\gph F\) in the direction \((u,v)\in\mathbb{R}^n\times\mathbb{R}^m\) is
\begin{equation*}
D^*F\left((\bar{x},\bar{y});(u,v)\right)(q):=\big\{p\in\mathbb{R}^n\;\big|\;(p,-q)\in N_{\gph F}\big((\bar{x},\bar{y});(u,v))\big)\big\},\quad q\in\R^m,
\end{equation*}
as defined in \cite{gfrerer2013directional}. Using the latter construction, the semismoothness was extended in \cite{gfrerer2021semismooth} to set-valued mappings $F\colon\R^n\rightrightarrows\R^m$ as follows: $F$ is said to be {\em semismooth$^*$} at \((\bar{x},\bar{y})\in\gph F\) if for all \((u,v)\in\mathbb{R}^n\times\mathbb{R}^m\) we have the condition
\[\braket{p,u}=\braket{q,v}\;\mbox{ whenever }\;(q,p)\in\gph D^*F\big((\bar{x},\bar{y});(u,v)\big).\]
We refer the reader to \cite{gfrerer2021semismooth} and also to \cite[Section~9.1.2]{m24} for various properties of semismooth$^*$ mappings. Note, in particular, that if $F=f\colon\R^n\to\R^m$ is single-valued and locally Lipschitzian around $\bar x$, then its semismooth$^*$ property at $\bar x$ is equivalent to condition  \eqref{SAP} without assuming the directional differentiability of $f$ at $\bar x$ as in Proposition~\ref{semi-char}.\vspace*{0.03in}

In our algorithms developed below, we achieve several convergence rates recalled in what follows.

\begin{definition}[\bf convergence rates]\label{rates}
{\rm Let \(\{x^k\}\subset\mathbb{R}^n\) be a sequence converging to \(\bar{x}\in\mathbb{R}^n\) as \(k\to \infty\) with \(x^k\neq\bar{x}\) for all $k\in\mathbb N$.

{\bf(i)} The convergence rate  of $\{x^k\}$ is {\em R-linear} if there are constants \(\mu\in(0,1)\) and \(c>0\) such that \(\|x^k-\bar{x}\|\le c\mu^k\) for all \(k\) sufficiently large.

{\bf(ii)} The convergence rate of $\{x^k\}$ is {\em Q-linear} if there exists a constant \(\mu\in (0,1)\) such that \(\|x^{k+1}-\bar{x}\|\le \mu\|x^k-\bar{x}\|\) for all \(k\) sufficiently large.

{\bf(iii)} The convergence rate of $\{x^k\}$ is {\em Q-superlinear} if \(\lim\limits_{k\to\infty}\frac{\|x^{k+1}-\bar{x}\|}{\|x^k-\bar{x}\|}=0\).}
\end{definition}

Finally in this section, we present the following useful technical lemma
taken from \cite[Lemma~4]{khanh2023globally}.

\begin{lemma}[\bf linear convergence of sequences]\label{lemma:linearconvergence}
Let \(\{\alpha_k\},\{\beta_k\},\{\gamma_k\}\) be sequences of positive real numbers. Assume that there exist constants \(c_1,c_2,c_3>0\) satisfying the estimates for large $k\in\mathbb N$:

{\bf(i)} \(\alpha_k-\alpha_{k+1}\ge c_1\beta^2_k.\)

{\bf(ii)} \(\beta_k\ge c_2\gamma_k.\)

{\bf(iii)} \(c_3\gamma_k^2\ge\alpha_k.\)
\\
Then \(\{\alpha_k\}\) converges to zero Q-linearly while $\{\beta_k\}$ and $\{\gamma_k\}$ converge to zero R-linearly.
\end{lemma}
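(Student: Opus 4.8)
The plan is to funnel all three hypotheses into a single one-step contraction inequality for $\{\alpha_k\}$, and then use that inequality to dominate $\{\beta_k\}$ and $\{\gamma_k\}$ from above. First I would chain (ii) and (iii) together: squaring (ii) gives $\beta_k^2\ge c_2^2\gamma_k^2$, and (iii) supplies $\gamma_k^2\ge\alpha_k/c_3$, so that $\beta_k^2\ge(c_2^2/c_3)\alpha_k$ for all large $k$. Substituting this lower bound into (i) yields $\alpha_k-\alpha_{k+1}\ge c_1(c_2^2/c_3)\alpha_k$, that is, $\alpha_{k+1}\le(1-c)\alpha_k$ with $c:=c_1c_2^2/c_3>0$.

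The key observation, and the one delicate point, is that the constant $\mu:=1-c$ automatically lies in $(0,1)$: since every $\alpha_k>0$, the inequality $0<\alpha_{k+1}\le(1-c)\alpha_k$ forces $1-c>0$, while $c>0$ gives $1-c<1$. Thus $\alpha_{k+1}\le\mu\alpha_k$ with $\mu\in(0,1)$ for all large $k$, which is exactly Q-linear convergence of $\{\alpha_k\}$ to zero in the sense of Definition~\ref{rates}(ii) (here the limit is $\bar x=0$, and positivity guarantees $\alpha_k\neq 0$). Iterating the contraction then gives $\alpha_k\le C\mu^k$ for some constant $C>0$ and all large $k$.

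To handle $\{\beta_k\}$ and $\{\gamma_k\}$ I would read (i) in the opposite direction. Because $\alpha_{k+1}>0$, we have $c_1\beta_k^2\le\alpha_k-\alpha_{k+1}\le\alpha_k$, hence $\beta_k\le\sqrt{\alpha_k/c_1}$; then (ii) gives $\gamma_k\le\beta_k/c_2\le\sqrt{\alpha_k/c_1}/c_2$. Combining these with the decay bound $\alpha_k\le C\mu^k$ shows that both $\beta_k$ and $\gamma_k$ are dominated by a constant multiple of $(\sqrt{\mu})^k$, and since $\sqrt{\mu}\in(0,1)$ this is precisely R-linear convergence to zero per Definition~\ref{rates}(i).

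I expect the only real subtlety to be the justification that $\mu\in(0,1)$: obtaining $\mu<1$ from $c>0$ is immediate, but $\mu>0$ must be extracted from the positivity of the iterates rather than assumed, and the ``for all large $k$'' qualifier should be tracked consistently so that a single threshold index governs all three chained estimates simultaneously.
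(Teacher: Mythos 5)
Your proof is correct and is essentially the standard argument for this lemma: the paper does not reproduce a proof (it cites Lemma~4 of \cite{khanh2023globally}), and the cited proof proceeds by exactly your chaining of (ii)--(iii) into (i) to obtain the contraction $\alpha_{k+1}\le(1-c_1c_2^2/c_3)\alpha_k$, with positivity of the iterates forcing the factor into $(0,1)$. Your handling of the two delicate points --- extracting $\mu>0$ from positivity rather than assuming it, and bounding $\beta_k,\gamma_k$ from above via $c_1\beta_k^2\le\alpha_k$ and $\gamma_k\le\beta_k/c_2$ to get the R-linear rate $\sqrt{\mu}$ --- is sound.
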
\vspace*{-0.15in}

\section{Regularized Newtonian Method with Wolfe Linesearch}\label{sec_reg_Wolfe}

Consider the following unconstrained optimization problem:
\begin{equation}\label{P}
\mbox{minimize }\;\vp(x)\;\mbox{ subject to }\;x\in\mathbb{R}^n,
\end{equation}
where \(\vp\) is of {\em class $C^{1,1}$}, i.e., it is continuously differentiable with the locally Lipschitzian gradient. We propose and justify the {\em generalized regularized Newton method} with the {\em Wolfe linesearch} (GRNM-W) for solving the second-order nonsmooth problem (\ref{P}). Here is the algorithm:

\begin{algorithm}[H]
\caption{Generalized regularized Newton method with the Wolfe
linesearch (GRNM-W)}\label{alg_regularizedNewton_Wolfe}
{\bf Input:} \(x^0 \in \mathbb{R}^n\), \(c>0\), \(0 < \sigma_1 < \sigma_2 <1\), \(\rho \in (0,1]\).
\begin{algorithmic}[1]
\For{\(k=0,1,\ldots\)}
\State If \(\nabla \varphi (x^k)=0\), stop; otherwise set \(\mu_k = c\|\nabla \varphi(x^k)\|^{\rho}\) and go to next step.
\State Choose $B_k \succ 0$. Find \(d^k \in \mathbb{R}^n\) such that \(-\nabla \varphi (x^k) \in \partial^2 \varphi (x^k){(d^k)} + \mu_k B_k d^k\).
\State Set \(\tau_k=1\) and check the Wolfe conditions:
\[\varphi(x^k+\tau_k d^k)\le\varphi(x^k)+\sigma_1 \tau_k \braket{\nabla\varphi(x^k),d^k},\]
\[\braket{\nabla \varphi(x^k+\tau_k d^k),d^k} \ge \sigma_2 \braket{\nabla \varphi(x^k),d^k}.\]
If these conditions do not hold, adjust \(\tau_k\) (using any specific implementation of the Wolfe linesearch) until it satisfies the Wolfe conditions. We assume that in the implementation there exists an upper bound \(\tau_{\mathrm{max}}\) on the maximum stepsize allowed.
\State Set \(x^{k+1}=x^k+\tau_kd^k\).
\EndFor
\end{algorithmic}
\end{algorithm}

The proposed algorithm is a counterpart of the globally convergent coderivative-based GRNM from \cite{khanh2023globally} with replacing the Armijo linesearch by the Wolfe one. The reader can consult \cite{doikov,roma} and the references therein for some other versions of globally convergent regularized Newton methods in the case of convex $C^2$-smooth objective functions using the Armijo linesearch for globalization.\vspace*{0.03in}

To proceed with the justification of Algorithm~\ref{alg_regularizedNewton_Wolfe}, we first present the following lemma.

\begin{lemma}[\bf existence of Newton-Wolfe directions]\label{lemma:newton step}
Let \(\varphi:\mathbb{R}^n \to \mathbb{R}\) be of class \(C^{1,1}\) around a point \(x \in \mathbb{R}^n\) such that \(\nabla\varphi(x)\neq 0\) and \(\partial^2\varphi(x)\) is positive-semidefinite, i.e.,
\begin{equation}
\braket{z,u}\ge 0\;\textit{ for all }\;u \in \mathbb{R}^n\;\textit{ and all }\;z \in \partial^2\vp(x)(u).
\end{equation}
Then for any positive-definite symmetric matrix $B\in\R^{n\times n}$,
there exists $d\ne 0$ such that
\begin{equation*}
 -\nabla\varphi(x)\in\partial^2\varphi(x)(d)+ B d.
\end{equation*}
\end{lemma}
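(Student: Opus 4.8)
The plan is to collapse the set-valued (coderivative) inclusion into an ordinary, solvable linear equation by extracting a single matrix from the generalized Hessian. Concretely, I will choose a matrix $A$ in the B-subdifferential $\partial_B(\nabla\vp)(x)=\overline\nabla(\nabla\vp)(x)$, show that $A$ is symmetric positive-semidefinite and that $Au\in\partial^2\vp(x)(u)$ holds for \emph{every} $u\in\R^n$, and then simply solve the linear system $(A+B)d=-\nabla\vp(x)$. The nonzero requirement on $d$ comes for free from this construction: once $A+B$ is invertible and $\nabla\vp(x)\ne 0$, the produced $d$ cannot vanish.

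The nonemptiness and the structural properties of $\partial_B(\nabla\vp)(x)$ are the first steps. Since $\vp\in C^{1,1}$, its gradient is locally Lipschitz, so by Rademacher's theorem $\nabla\vp$ is differentiable on a set of full measure near $x$; picking $x_k\to x$ at which $\nabla\vp$ is differentiable and using that the Jacobians $\nabla^2\vp(x_k)$ are bounded by the Lipschitz modulus (Bolzano--Weierstrass) produces a convergent subsequence $\nabla^2\vp(x_k)\to A$, so $\partial_B(\nabla\vp)(x)\ne\varnothing$. Each such $A$ is symmetric, being a limit of the symmetric Hessians at points of twice differentiability. The crux is the direction-uniform inclusion: since $\nabla\vp$ is differentiable at $x_k$, one has $D^*\nabla\vp(x_k)(\eta)=\{\nabla^2\vp(x_k)^\top\eta\}=\{\nabla^2\vp(x_k)\eta\}$ by symmetry, and passing to the limit for each fixed $\eta$ through the limiting construction \eqref{lnc}--\eqref{lim-cod} (closedness of the graph of the limiting normal cone) yields $A\eta\in D^*\nabla\vp(x)(\eta)=\partial^2\vp(x)(\eta)$ for all $\eta\in\R^n$.

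With this in hand the proof finishes quickly. Applying the positive-semidefiniteness hypothesis to $z=Au\in\partial^2\vp(x)(u)$ gives $\braket{Au,u}\ge 0$ for all $u$, so $A\succeq 0$; since $B\succ 0$, the matrix $A+B$ is positive-definite and hence invertible. Setting $d:=-(A+B)^{-1}\nabla\vp(x)\ne 0$ we obtain $(A+B)d=-\nabla\vp(x)$, that is $-\nabla\vp(x)=Ad+Bd\in\partial^2\vp(x)(d)+Bd$, which is exactly the asserted inclusion.

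I expect the main obstacle to be the direction-uniform inclusion $A\eta\in\partial^2\vp(x)(\eta)$ for all $\eta$ simultaneously: it rests on the computation of the regular normal cone to $\gph\nabla\vp$ at points of differentiability together with the symmetry of the limiting Hessians, whereas the remaining ingredients (nonemptiness of $\partial_B$, the sign condition $A\succeq 0$, and invertibility of $A+B$) are routine. I would deliberately avoid the alternative route of proving surjectivity of the positively homogeneous map $d\mapsto\partial^2\vp(x)(d)+Bd$ by a homotopy/degree argument, because the coderivative values are in general nonconvex (as emphasized right after \eqref{lnc}) and the usual set-valued degree theory requires convex values; the B-subdifferential selection bypasses this difficulty altogether.
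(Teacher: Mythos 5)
Your proof is correct and takes essentially the same approach as the paper: the paper's own ``proof'' of this lemma is just a pointer to \cite[Theorem~3(i)]{khanh2023globally}, and the distilled argument there --- which this paper in fact writes out explicitly when establishing well-posedness of Algorithm~\ref{alg_wolfe_KL} in Theorem~\ref{lemma:wellposedness_of_alg2} --- is exactly your B-subdifferential selection. Namely, one picks a symmetric matrix $H\in\partial_B(\nabla\vp)(x)\neq\varnothing$, uses the inclusion $Hd\in\partial^2\vp(x)(d)$ for all $d$ to deduce $H\succeq 0$ from the positive-semidefiniteness hypothesis, and then solves the nonsingular linear system $(H+B)d=-\nabla\vp(x)$, noting $d\neq 0$ since $\nabla\vp(x)\neq 0$.
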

\begin{proof} This can be distilled from the similar proof of \cite[Theorem~3(i)]{khanh2023globally}, and hence is omitted.
\end{proof}

Now we get the well-posedness of the proposed CRNM-W.

\begin{theorem}[\bf well-posedness of
Algorithm~{\rm\ref{alg_regularizedNewton_Wolfe}}]
\label{thm:wellposedness} Let \(\varphi:\mathbb{R}^n \to \mathbb{R}\) be of class \(C^{1,1}\) and be bounded from below, and let \(x^0\in\mathbb{R}^n\) be a starting point such that for all \(x\in \mathbb{R}^n\)  with \(\vp(x)\le \vp(x^0)\), the generalized Hessian
\(\partial^2\vp(x)\) is
positive-semidefinite.\footnote{This condition always holds when $\vp$ is convex; see \cite{chieu2011characterizing}.} If \(\nabla\vp(x^k)\neq 0\), then there exists \(d^k \neq 0\) such that the condition in line {\rm 3} of Algorithm~{\rm\ref{alg_regularizedNewton_Wolfe}} holds and the Wolfe linesearch is well-defined. Moreover, Algorithm~{\rm\ref{alg_regularizedNewton_Wolfe}} either stops after finitely many iterations at a stationary point, or generates a sequence \(\{x^k\}\) such that \(\{\vp(x^k)\}\) is decreasing.
\end{theorem}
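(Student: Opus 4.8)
The plan is an induction on the iteration index that maintains the invariant $\vp(x^k)\le\vp(x^0)$, which keeps every iterate inside the sublevel set on which $\partial^2\vp$ is assumed positive-semidefinite. The base case $k=0$ is trivial. Assuming $\vp(x^k)\le\vp(x^0)$ and $\nabla\vp(x^k)\ne 0$, I would first note that $\mu_k=c\|\nabla\vp(x^k)\|^\rho>0$, so $\mu_k B_k$ is a positive-definite symmetric matrix; applying Lemma~\ref{lemma:newton step} with $B:=\mu_k B_k$ then produces a direction $d^k\ne 0$ satisfying the inclusion required in line~3.

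The key step is to show that $d^k$ is a strict descent direction. From the inclusion there is $z\in\partial^2\vp(x^k)(d^k)$ with $-\nabla\vp(x^k)=z+\mu_k B_k d^k$; pairing with $d^k$ and using $\braket{z,d^k}\ge 0$ (positive-semidefiniteness), $\mu_k>0$, $B_k\succ 0$, and $d^k\ne 0$ gives
\[
-\braket{\nabla\vp(x^k),d^k}=\braket{z,d^k}+\mu_k\braket{B_k d^k,d^k}\ge\mu_k\braket{B_k d^k,d^k}>0,
\]
so that $\braket{\nabla\vp(x^k),d^k}<0$.

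With strict descent established, I would prove well-definedness of the Wolfe linesearch by the classical existence argument. Put $\phi(\tau):=\vp(x^k+\tau d^k)$, which is $C^1$ with $\phi'(\tau)=\braket{\nabla\vp(x^k+\tau d^k),d^k}$ and $\phi'(0)<0$, and let $\ell(\tau):=\vp(x^k)+\sigma_1\tau\phi'(0)$. Since $\sigma_1<1$, one has $(\phi-\ell)'(0)=(1-\sigma_1)\phi'(0)<0$, so the first Wolfe (Armijo) inequality holds for all small $\tau>0$; because $\vp$ is bounded below while $\ell(\tau)\to-\infty$, there is a smallest $\tau^*>0$ with $\phi(\tau^*)=\ell(\tau^*)$, and $\phi<\ell$ on $(0,\tau^*)$. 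The mean value theorem on $[0,\tau^*]$ then yields $\tau'\in(0,\tau^*)$ with $\phi'(\tau')=\sigma_1\phi'(0)$, and since $\sigma_1<\sigma_2$ and $\phi'(0)<0$ this gives $\phi'(\tau')\ge\sigma_2\phi'(0)$, i.e., the curvature condition, while the Armijo inequality holds at $\tau'<\tau^*$. Hence stepsizes satisfying both Wolfe conditions form a nonempty (and, by continuity, open) set, so line~4 is well-defined; the bound $\tau_{\max}$ is merely the safeguard assumed in the implementation.

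To close the induction and finish, the Armijo inequality with $\tau_k>0$, $\sigma_1>0$, and $\braket{\nabla\vp(x^k),d^k}<0$ gives
\[
\vp(x^{k+1})\le\vp(x^k)+\sigma_1\tau_k\braket{\nabla\vp(x^k),d^k}<\vp(x^k)\le\vp(x^0),
\]
so $x^{k+1}$ remains in the sublevel set and the invariant is restored. Therefore the algorithm either stops at some $x^k$ with $\nabla\vp(x^k)=0$, which is a stationary point, or generates an infinite sequence along which $\{\vp(x^k)\}$ is strictly decreasing. I expect the main obstacle to be the Wolfe existence step, as it is the only place combining boundedness below, the strict descent of $d^k$, and the ordering $\sigma_1<\sigma_2$; the descent property and the sublevel-set invariance are comparatively routine once Lemma~\ref{lemma:newton step} is invoked.
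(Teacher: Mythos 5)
Your proposal is correct and follows essentially the same route as the paper: existence of $d^k\neq 0$ via Lemma~\ref{lemma:newton step}, the descent inequality $\braket{-\nabla\vp(x^k),d^k}\ge\mu_k\|d^k\|^2_{B_k}>0$ from positive-semidefiniteness of $\partial^2\vp(x^k)$ on the sublevel set, and the existence of a Wolfe stepsize for a $C^1$ function bounded from below. The only differences are cosmetic: you inline the classical existence argument that the paper simply cites (Nocedal--Wright, Lemma~3.1), and you make explicit the sublevel-set induction that the paper treats implicitly.
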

\begin{proof}
The existence of \(d^k\) follows from Lemma \ref{lemma:newton step}. Since \(\partial^2\vp(x^k)\) is positive-semidefinite (by the sufficient decrease condition and the assumption on $x^0$), and since \(-\nabla\vp(x^k)-\mu_k B_k d^k\in \partial^2\vp(x^k)(d^k)\), we get \(\braket{-\nabla\vp(x^k)-\mu_k B_k d^k,d^k}\ge 0\) giving us
\begin{equation}\label{eq_gradient>=d}
\braket{-\nabla\vp(x^k), d^k}\ge \mu_k \|d^k\|^2_{B_k}.
\end{equation}
Using \(\mu^k>0\) and \(d^k\neq 0\) ensures that \(\braket{-\nabla\vp(x^k), d^k}>0\). This tells us by \cite[Lemma~3.1]{nocedal1999numerical} that \(d^k\) is a descent direction and that a stepsize satisfying the Wolfe conditions always exists when the objective function is continuously differentiable and bounded from below. By design, Algorithm~\ref{alg_regularizedNewton_Wolfe} either stops after finitely many iterations (in which case it lands at a stationary point), or generates a sequence of iterates \(\{x^k\}\) such that \(\vp(x^{k+1})<\vp(x^k)\) for all \(k \in \mathbb{N}\).
\end{proof}

The next theorem establishes stationarity of accumulation points of iterates in GRNM-W.

\begin{theorem}[\bf stationarity of accumulation points in
Algorithm~{\rm\ref{alg_regularizedNewton_Wolfe}}]
\label{thm:weakconvergence_wolfe} Under the assumptions of
Theorem~{\rm\ref{thm:wellposedness}}, suppose that the eigenvalues of the regularization matrices \(B_k\) in Algorithm~{\rm\ref{alg_regularizedNewton_Wolfe}} are contained in \([\mb^2,\Mb^2]\) with \(\Mb\ge\mb>0\), which implies that \(\mb\|x\|\le\|x\|_{B_k}\le \Mb\|x\|\) for all \(x\in\R^n\).  Then every accumulation point of \(\{x^k\}\) is stationary for problem \eqref{P}.
\end{theorem}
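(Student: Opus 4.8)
The plan is to run a Zoutendijk-type argument adapted to the coderivative setting, built on the key inequality \eqref{eq_gradient>=d} already recorded in the proof of Theorem~\ref{thm:wellposedness}. For each $k$ with $\nabla\vp(x^k)\neq 0$ I would work with the angle quantity
$$\cos\theta_k:=\frac{\braket{-\nabla\vp(x^k),d^k}}{\|\nabla\vp(x^k)\|\,\|d^k\|}>0,$$
which is positive since $d^k$ is a descent direction. The aim is to establish, along any subsequence converging to an accumulation point $\bar x$, a uniform per-iteration decrease of Zoutendijk form $\vp(x^k)-\vp(x^{k+1})\ge c_0\cos^2\theta_k\|\nabla\vp(x^k)\|^2$, and then to use that $\{\vp(x^k)\}$ is decreasing and bounded below (hence convergent) to force $\cos^2\theta_k\|\nabla\vp(x^k)\|^2\to 0$ along that subsequence.

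First I would obtain two-sided control of the Newton--Wolfe direction. From \eqref{eq_gradient>=d}, the stated bound $\|d^k\|_{B_k}\ge\mb\|d^k\|$, and Cauchy--Schwarz,
$$\mu_k\mb^2\|d^k\|^2\le\mu_k\|d^k\|_{B_k}^2\le\braket{-\nabla\vp(x^k),d^k}\le\|\nabla\vp(x^k)\|\,\|d^k\|,$$
so $\|d^k\|\le\|\nabla\vp(x^k)\|/(\mu_k\mb^2)=\|\nabla\vp(x^k)\|^{1-\rho}/(c\mb^2)$, which keeps $\{d^k\}$ bounded near $\bar x$. For the reverse bound I would use the defining inclusion $-\nabla\vp(x^k)-\mu_kB_kd^k\in\partial^2\vp(x^k)(d^k)$ together with the boundedness of the generalized Hessian of a $C^{1,1}$ function: if $L$ is a Lipschitz modulus of $\nabla\vp$ on a bounded neighborhood of $\bar x$, then every $z\in\partial^2\vp(x^k)(d^k)=(D^*\nabla\vp)(x^k)(d^k)$ obeys $\|z\|\le L\|d^k\|$, while $\|B_kd^k\|\le\Mb^2\|d^k\|$. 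This gives $\|\nabla\vp(x^k)\|\le(L+\mu_k\Mb^2)\|d^k\|$, and combining with \eqref{eq_gradient>=d} yields
$$\cos\theta_k\ge\frac{\mu_k\mb^2\|d^k\|}{\|\nabla\vp(x^k)\|}\ge\frac{\mu_k\mb^2}{L+\mu_k\Mb^2}.$$

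Next I would convert the Wolfe conditions into the sufficient-decrease estimate. The curvature condition gives $\braket{\nabla\vp(x^{k+1})-\nabla\vp(x^k),d^k}\ge(1-\sigma_2)\braket{-\nabla\vp(x^k),d^k}$, while the Lipschitz bound on the segment $[x^k,x^{k+1}]$ gives $\braket{\nabla\vp(x^{k+1})-\nabla\vp(x^k),d^k}\le L\tau_k\|d^k\|^2$; together these lower-bound $\tau_k$, and substituting into the Armijo condition produces $\vp(x^k)-\vp(x^{k+1})\ge c_0\cos^2\theta_k\|\nabla\vp(x^k)\|^2$ with $c_0=\sigma_1(1-\sigma_2)/L$, exactly as in the classical Zoutendijk computation \cite{nocedal1999numerical}. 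To conclude, suppose $\bar x$ is an accumulation point with $\nabla\vp(\bar x)\neq 0$ and choose $x^{k_j}\to\bar x$. Then $\mu_{k_j}=c\|\nabla\vp(x^{k_j})\|^\rho\to c\|\nabla\vp(\bar x)\|^\rho>0$, so the angle estimate keeps $\cos\theta_{k_j}$ bounded away from $0$, while $\|\nabla\vp(x^{k_j})\|\to\|\nabla\vp(\bar x)\|>0$; hence $\cos^2\theta_{k_j}\|\nabla\vp(x^{k_j})\|^2$ stays bounded below by a positive constant. This contradicts $\vp(x^k)-\vp(x^{k+1})\to 0$, which holds because $\{\vp(x^k)\}$ converges, and therefore $\nabla\vp(\bar x)=0$.

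The main obstacle I anticipate is that $\vp$ is only of class $C^{1,1}$, so the modulus $L$ is merely local and I must secure a single $L$ valid for all the relevant steps. I would do this by noting that $\|x^{k+1}-x^k\|=\tau_k\|d^k\|\le\tau_{\mathrm{max}}\|\nabla\vp(x^k)\|^{1-\rho}/(c\mb^2)$ is bounded, so along the subsequence both $x^{k_j}$ and $x^{k_j+1}$ lie in one fixed compact ball about $\bar x$, on which $\nabla\vp$ is globally Lipschitz; that same ball furnishes the uniform coderivative bound $\|z\|\le L\|d^k\|$, which is precisely why the angle estimate and the decrease estimate can share a single constant $L$.
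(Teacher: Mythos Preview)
Your argument is correct and follows a genuinely different route from the paper's proof. The paper does not run a Zoutendijk computation at all: instead it introduces the rescaled directions $\tilde d^k=\|\nabla\vp(x^k)\|^{\rho-1}d^k$, shows that $\{\tilde d^k\}$ is bounded, passes to a subsequence with $d^{k_j}\to\bar d$, and then proves $\braket{\nabla\vp(\bar x),\bar d}=0$ by a case split on whether $\limsup_j\tau_{k_j}>0$ (using the Armijo part in the first case and the curvature part in the second). A final dichotomy on $\bar d=0$ versus $\bar d\ne 0$, with the coderivative bound $\|\nabla\vp(x^{k_j})+\mu_{k_j}B_{k_j}d^{k_j}\|\le l\|d^{k_j}\|$ invoked only in the degenerate case $\bar d=0$, closes the argument.

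Your approach replaces all of this with the classical angle--decrease machinery: the positive-semidefiniteness gives $\cos\theta_k\ge\mu_k\mb^2\|d^k\|/\|\nabla\vp(x^k)\|$, the coderivative bound $\|\nabla\vp(x^k)\|\le(L+\mu_k\Mb^2)\|d^k\|$ upgrades this to a uniform lower bound on $\cos\theta_{k_j}$ whenever $\mu_{k_j}$ stays away from zero, and then the standard Wolfe/Lipschitz decrease estimate delivers the contradiction. This is more quantitative---it yields an explicit per-step decrease $\vp(x^k)-\vp(x^{k+1})\ge c_0\cos^2\theta_k\|\nabla\vp(x^k)\|^2$ that could in principle be summed---and it avoids the double case analysis. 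The paper's argument, on the other hand, uses the Lipschitz/coderivative bound only as a last resort and does not need to localize the constant $L$ along full segments $[x^{k_j},x^{k_j+1}]$. Your handling of that localization (bounding $\|x^{k_j+1}-x^{k_j}\|$ via $\tau_{\max}$ and the upper bound on $\|d^{k_j}\|$, then taking a single compact ball containing both endpoints) is the right fix and makes the argument complete.
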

\begin{proof}
It follows from Theorem~\ref{thm:wellposedness} that
Algorithm~\ref{alg_regularizedNewton_Wolfe} either stops after finitely many iterations (in which case it must land at a stationary point), or generates a sequence of iterates \(\{x^k\}\) such that \(\vp(x^{k+1})<\vp(x^k)\) for all \(k \in \mathbb{N}\). This tells us that
\(\{x^k\}\subset \Omega:=\{x \in \mathbb{R}^n\;|\;\vp(x)\le \vp(x^0)\}.\) Suppose that the algorithm does not stop after finitely many steps. Then we have \(\nabla\vp(x^k)\neq 0\) for all \(k \in \mathbb{N}\). Recall that $\mu_k:=c\|\nabla\vp(x^k)\|^\rho$ with $c>0$ and $\rho\in(0,1]$. Define the modified directions
\begin{equation}\label{dtilde}
\tilde{d}^k:=\|\nabla\vp(x^k)\|^{\rho-1}d^k,\quad k\in\mathbb N,
\end{equation}
which is possible due to \(\|\nabla\vp(x^k)\|>0.\) Let us show that the sequence \(\{\tilde{d}^k\}\) is bounded. Indeed, the algorithm design (line~3 of Algorithm~\ref{alg_regularizedNewton_Wolfe}) provides
\begin{equation*}
-\nabla\vp(x^k)-\mu_k B_k d^k \in \partial^2\vp(x^k)(d^k)\;\mbox{ for all }\;k\in\mathbb B,
\end{equation*}
which ensures by the positive-semidefiniteness of \(\partial^2\vp(x^k)\) that
\begin{equation}\label{eq_psd}
{\braket{-\nabla\vp(x^k),d^k} \ge \mu_k\|d^k\|^2_{B_k} \ge \mu_k \mb^2 \|d^k\|^2.}
\end{equation}
By the Cauchy-Schwarz inequality with the choice of the regularization parameter \(\mu_k=c\|\nabla\vp(x^k)\|^{\rho}\) and the regularization matrices $B_k$, we have
\begin{equation*}
c\|\nabla\vp(x^k)\|^{\rho} \mb^2 \|d^k\|^2 \le c\|\nabla\vp(x^k)\|^{\rho} \|d^k\|^2_{B_k} \le \|\nabla\vp(x^k)\| \|d^k\|,
\end{equation*}
which brings us to the estimate
\begin{equation*}
\|\nabla\vp(x^k)\|^{\rho-1} \|d^k\| \le \frac{1}{\mb^2 c},\quad k\in\mathbb N.
\end{equation*}
This justifies that the sequence of \(\|\tilde{d}^k\|=\|\nabla\vp(x^k)\|^{\rho-1} \|d^k\|\) is bounded.

Let \(\{x^{k_j}\}\) be a subsequence of \(\{x^k\}\) such that \(x^{k_j} \to \bar{x}\) as \(j \to \infty\), i.e., \(\bar{x}\) is an accumulation point of \(\{x^{k}\}\). Since \(\vp(\bar{x})\) is clearly an accumulation point of the decreasing sequence \(\{\vp(x^k)\}\), we have \(\vp(x^k) \to \vp(\bar{x})\) as \(k \to \infty\). It follows from the Wolfe conditions
\begin{equation*}
\vp(x^{k+1})-\vp(x^k) \le {\sigma_1} \tau_k\braket{\nabla\vp(x^k),d^k} <0
\end{equation*}
that the passage to the limit leads us to
\begin{equation}\label{eq_tau}
\lim_{k \to \infty} \tau_k\braket{\nabla\vp(x^k),d^k}=0.
\end{equation}
The boundedness of \(\{\tilde{d}^{k}\}\) gives us a convergent subsequence, which can be taken as \(\{\tilde{d}^{k_j}\}\) without loss of generality. We get that \(\{d^{k_j}\}\) is convergent with limit \(\bar{d}:=\lim_{j \to \infty} d^{k_j}=\lim_{j \to \infty} \|\nabla\vp(x^{k_j})\|^{1-\rho}\tilde{d}^{k_j}\) by the continuity of the function \(\|\cdot\|^{1-\rho}\) on $\R$, which is equivalent to $\rho\le 1$. Next we claim that
\begin{equation}\label{eq_innerproduct}
\braket{\nabla\vp(\bar{x}),\bar{d}}=0.
\end{equation}
If \(\limsup_{j \to \infty}\tau_{k_j} > 0,\) then (\ref{eq_innerproduct}) follows from (\ref{eq_tau}). Hence we only need to consider the case where $\limsup_{j \to \infty}\tau_{k_j}=0$, which is the same as \(\lim_{j \to \infty}\tau_{k_j}=0.\)
Note that
\begin{equation*}
\langle\nabla\vp(x^{k_j}+\tau_{k_j}d^{k_j}),d^{k_j}\rangle\ge\sigma_2 \langle\nabla\vp(x^{k_j}),d^{k_j}\rangle\;\mbox{ for all }\;j\in\mathbb N.
\end{equation*}
Letting \(j \to \infty\) leads to \(\braket{\nabla\vp(\bar{x}),\bar{d}}\ge 0\) since
\(\sigma_2<1\). On the other hand, taking limits in \(\braket{\nabla\vp(x^{k_j}),d^{k_j}}<0\) yields \(\braket{\nabla\vp(\bar{x}),\bar{d}}\le 0\), which justifies \eqref{eq_innerproduct} in this case as well.
Combining further (\ref{eq_innerproduct}) with (\ref{eq_psd}), we have the equalities
\begin{equation}\label{eq_zero}
\mb^2c\|\nabla\vp(\bar{x})\|^{\rho}\|\bar{d}\|^2=\mb^2\lim_{j \to \infty} \mu_{k_j} \|d^{k_j}\|^2=0.
\end{equation}
If \(\|\bar{d}\|\neq 0\), then \(\|\nabla\vp(\bar{x})\|^{\rho}=0\) by (\ref{eq_zero}). Since \(\rho > 0\), we get \(\|\nabla\vp(\bar{x})\|=0\).
If \(\|\bar{d}\|=0\), we can still show that $\|\nabla\vp(\bar x)\|=0$.
Indeed, recall that \(\vp\) is of class \(C^{1,1}\) around \(\bar{x}\) and $-\nabla\vp(x^k)-\mu_k B_k d^k \in \partial^2\vp(x^k)(d^k)$. Then it follows from \cite[Theorem~1.44]{mordukhovich2006variational} that there is \(l>0\) with
\[\left\|\nabla\vp(x^{k_j})+\mu_{k_j} B_{k_j} d^{k_j}\right\|\le l\left\|d^{k_j}\right\|\]
for all large \(j\). Letting \(j \to \infty\) verifies \(\|\nabla\vp(\bar{x})\| \le l \|\bar{d}\|=0\) and completes the proof.
\end{proof}

\begin{theorem}[\bf convergence and convergence rates of GRNM-W]\label{thm:convergence_wolfe}
 In the setting of Theorem~{\rm\ref{thm:weakconvergence_wolfe}}, let \(\bar{x}\) be an accumulation point of \(\{x^k\}\) such that \(\nabla\vp\) is metrically regular around \(\bar{x}\). Then \(\bar{x}\) is a tilt-stable local minimizer of \(\vp\), and Algorithm~{\rm \ref{alg_regularizedNewton_Wolfe}} converges to \(\bar{x}\) with the convergence rates as follows:

{\bf(i)} The sequence \(\{\vp(x^k)\}\) converges Q-linearly to \(\vp(\bar{x})\).

{\bf(ii)} The sequences \(\{x^k\}\) and \(\{\nabla\vp(x^k)\}\) converge  R-linearly to \(\bar{x}\) and \(0\), respectively.

{\bf(iii)} The convergence rates of \(\{x^k\}\), \(\{\vp(x^k)\}\), and \(\{\nabla\vp(x^k)\}\) are Q-superlinear if \(\nabla\vp\) is semismooth$^*$ at \(\bar{x}\) and one of the following two groups of conditions holds:

{\bf(a)} \(\nabla \vp\) is directionally differentiable at \(\bar{x}\) and \(\sigma_1 \in (0,\frac{1}{2})\).

{\bf(b)} \(\sigma_1\in(0,\frac{1}{2l\kappa})\) and \(\sigma_2 \in (1-\frac{\kappa}{l},1)\), where \(\kappa>0\) and \(l>0\) are moduli of the metric regularity and Lipschitz continuity of \(\nabla\vp\) around \(\bar{x}\), respectively.
\end{theorem}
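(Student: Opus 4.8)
The plan is to localize the analysis at $\bar x$ and then feed three clean estimates into Lemma~\ref{lemma:linearconvergence}. Since $\bar x$ is an accumulation point, Theorem~\ref{thm:weakconvergence_wolfe} already gives $\nabla\vp(\bar x)=0$, so $\bar x$ is stationary and $\partial^2\vp(\bar x)$ is positive-semidefinite by the standing assumption. The first task is to upgrade this to positive-\emph{definiteness}. The Mordukhovich coderivative criterion identifies metric regularity of $\nabla\vp$ at $\bar x$ with triviality of the kernel of the generalized Hessian, i.e. $0\in\partial^2\vp(\bar x)(u)$ forces $u=0$; combined with the self-adjoint structure and positive-semidefiniteness of $\partial^2\vp(\bar x)$, this yields $\langle z,u\rangle>0$ for all $u\neq 0$ and $z\in\partial^2\vp(\bar x)(u)$. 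By the coderivative characterization of tilt stability this positive-definiteness is exactly tilt stability of $\bar x$ in the sense of Definition~\ref{tilt}, and it delivers the two-sided quadratic estimate $\tfrac{m}{2}\|x-\bar x\|^2\le\vp(x)-\vp(\bar x)\le\tfrac{l}{2}\|x-\bar x\|^2$ near $\bar x$ (the upper bound is just the descent lemma since $\nabla\vp(\bar x)=0$), together with the error bound $\kappa^{-1}\|x-\bar x\|\le\|\nabla\vp(x)\|$ from metric regularity.

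Second, I would promote pointwise positive-definiteness at $\bar x$ to a uniform version on a neighborhood: by outer semicontinuity of the coderivative and a compactness/contradiction argument there is $\ell>0$ with $\langle z,u\rangle\ge\ell\|u\|^2$ whenever $z\in\partial^2\vp(x)(u)$ and $x$ is close to $\bar x$. Writing the Newton system as $-\nabla\vp(x^k)=z^k+\mu_k B_k d^k$ with $z^k\in\partial^2\vp(x^k)(d^k)$ and pairing with $d^k$ gives $\langle-\nabla\vp(x^k),d^k\rangle\ge\ell\|d^k\|^2$, a lower bound that crucially does \emph{not} degrade as $\mu_k\to 0$. This is what makes the Wolfe stepsize uniformly bounded below: combining the curvature (second Wolfe) condition with Lipschitz continuity of $\nabla\vp$ yields $\tau_k\ge(1-\sigma_2)\ell/l=:\tau_{\min}>0$, and the sufficient-decrease (first Wolfe) condition then produces $\vp(x^k)-\vp(x^{k+1})\ge\tfrac{\sigma_1(1-\sigma_2)}{l}\,\ell^2\|d^k\|^2$. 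Since \cite[Theorem~1.44]{mordukhovich2006variational} gives $\|\nabla\vp(x^k)\|\le(l+\mu_k\Mb^2)\|d^k\|$ and hence $\|d^k\|\ge c\,\|\nabla\vp(x^k)\|$ for large $k$, we reach $\vp(x^k)-\vp(x^{k+1})\ge c_1\|\nabla\vp(x^k)\|^2$.

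Third, I would apply Lemma~\ref{lemma:linearconvergence} with $\alpha_k:=\vp(x^k)-\vp(\bar x)$, $\beta_k:=\|\nabla\vp(x^k)\|$, and $\gamma_k:=\|x^k-\bar x\|$: condition (i) is the last displayed decrease, condition (ii) is the error bound $\beta_k\ge\kappa^{-1}\gamma_k$, and condition (iii) is $\alpha_k\le\tfrac l2\gamma_k^2$. To ensure these hold for the whole tail (not just a subsequence), I use the \emph{lower} quadratic growth $\tfrac m2\|x-\bar x\|^2\le\vp(x)-\vp(\bar x)$: once an iterate of the convergent subsequence enters a small neighborhood of $\bar x$, monotonicity of $\{\vp(x^k)\}$ traps all subsequent iterates there, so the full sequence converges to $\bar x$. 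The lemma then yields (i) Q-linear convergence of $\{\vp(x^k)\}$ and (ii) R-linear convergence of $\{x^k\}$ and $\{\nabla\vp(x^k)\}$.

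Finally, for the Q-superlinear rates in (iii) I would show that under the semismooth$^*$ property the \emph{unit} stepsize is eventually admissible, so GRNM-W reduces locally to the undamped regularized Newton step, whose superlinear rate $\|x^k+d^k-\bar x\|=o(\|x^k-\bar x\|)$ is available from \cite{khanh2023globally}. The curvature condition at $\tau_k=1$ holds automatically for large $k$ since $\langle\nabla\vp(x^k+d^k),d^k\rangle=o(\|d^k\|^2)$ by superlinearity while $\sigma_2\langle\nabla\vp(x^k),d^k\rangle\le-\sigma_2\ell\|d^k\|^2$ stays bounded away from $0$ relative to $\|d^k\|^2$. The decisive and most delicate point is the Armijo inequality at $\tau_k=1$: under (a), directional differentiability plus semismoothness$^*$ furnish the expansion $\vp(x^k+d^k)-\vp(x^k)=\tfrac12\langle\nabla\vp(x^k),d^k\rangle+o(\|d^k\|^2)$, which beats $\sigma_1\langle\nabla\vp(x^k),d^k\rangle$ precisely because $\sigma_1<\tfrac12$; under (b), lacking this expansion, one controls $\vp(x^k+d^k)-\vp(x^k)$ through the Lipschitz modulus $l$ and the metric-regularity modulus $\kappa$, and the prescribed windows $\sigma_1\in(0,\tfrac{1}{2l\kappa})$ and $\sigma_2\in(1-\tfrac{\kappa}{l},1)$ are exactly what close the estimate. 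Once $\tau_k\equiv 1$ eventually, the superlinear rate of $\{x^k\}$ transfers to $\{\vp(x^k)\}$ and $\{\nabla\vp(x^k)\}$ via $\vp(x^k)-\vp(\bar x)\asymp\|x^k-\bar x\|^2$ and $\|\nabla\vp(x^k)\|\asymp\|x^k-\bar x\|$. I expect Step~4 — admissibility of the unit Wolfe step in case (b) — to be the main obstacle, with the upgrade from positive-semidefiniteness to positive-definiteness in Step~1 the other place demanding care.
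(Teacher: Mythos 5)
Your architecture for parts (i) and (ii) essentially reproduces the paper's: a uniform second-order lower bound near $\bar x$, a positive lower bound on the Wolfe stepsizes, a per-iteration decrease proportional to $\|d^k\|^2$ and hence to $\|\nabla\vp(x^k)\|^2$, and then Lemma~\ref{lemma:linearconvergence} with $\alpha_k=\vp(x^k)-\vp(\bar x)$, $\beta_k=\|\nabla\vp(x^k)\|$, $\gamma_k=\|x^k-\bar x\|$. Two of your shortcuts, however, need repair. First, the Step~1 upgrade ``positive-semidefiniteness plus trivial kernel implies positive-definiteness'' is not an algebraic fact for set-valued generalized Hessians: unlike symmetric matrices, there is no argument forcing $\braket{z,u}=0$ with $z\in\partial^2\vp(\bar x)(u)$, $u\neq 0$, to contradict injectivity; what you need here is precisely the Drusvyatskiy--Mordukhovich--Nghia theorem that metric regularity of $\nabla\vp$ plus positive-semidefiniteness yields tilt stability (the paper's Claim~1), which is a theorem, not bookkeeping. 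Second, your trapping argument for full-sequence convergence is incomplete as stated: monotonicity of $\{\vp(x^k)\}$ plus \emph{local} quadratic growth does not prevent an iterate from jumping out of the neighborhood where quadratic growth holds; you must add the step-length bound $\|x^{k+1}-x^k\|\le\tau_{\max}\ell^{-1}\|\nabla\vp(x^k)\|\le\tau_{\max}\ell^{-1}l\|x^k-\bar x\|$ (available from your own estimates) to close the induction, whereas the paper invokes Ostrowski's condition instead. On the plus side, your verification of the curvature condition at $\tau_k=1$ via $\|\nabla\vp(x^k+d^k)\|\le l\|x^k+d^k-\bar x\|=o(\|d^k\|)$ is simpler than the paper's detour through Clarke Jacobians.

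The genuine gap is part (iii)(b), which you flag as ``the main obstacle'' but never prove, and the reason it cannot be completed from your estimates is quantitative and traces back to Step~2. Your compactness/outer-semicontinuity argument produces a uniform bound $\braket{z,u}\ge\ell\|u\|^2$ with an \emph{anonymous} constant $\ell>0$, whereas the stated thresholds in (b) are expressed through the metric-regularity modulus $\kappa$. The paper instead gets the uniform bound with the constant $1/\kappa$ exactly, by combining tilt stability \emph{with modulus} $\kappa$ (Claim~1) with the second-order characterization of tilt-stable minimizers from \cite{mordukhovich2015second,chieu2017second}; this yields local $\kappa^{-1}$-strong convexity and $\kappa^{-1}$-strong monotonicity of $\nabla\vp$ near $\bar x$. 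With that specific constant in hand, the unit-step Armijo test reduces, via
\begin{equation*}
\vp(x^k+d^k)-\vp(x^k)\le\braket{\nabla\vp(x^k+d^k),d^k}-\frac{1}{2\kappa}\|d^k\|^2,\qquad \nabla\vp(x^k+d^k)=o(\|d^k\|),\qquad \frac{\|x^k-\bar x\|}{\|d^k\|}\to 1,
\end{equation*}
to the inequality $\sigma_1 l<\frac{1}{2\kappa}$, and the unit-step curvature test reduces, via strong monotonicity, to $(1-\sigma_2)l<\frac{1}{\kappa}$; these are exactly the windows the theorem prescribes. With only $\ell$ in place of $1/\kappa$, neither reduction can be matched to the stated ranges of $\sigma_1$ and $\sigma_2$, so case (b) — the one part of the theorem whose very formulation depends on $\kappa$ — remains unproved in your proposal.
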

\begin{proof}
We split the proof into the seven claims as follows.\\[1ex]
{\bf Claim 1:} \textit{\(\bar{x}\) is a tilt-stable local minimizer of \(\vp\).} By Theorem \ref{thm:weakconvergence_wolfe}, \(\bar{x}\) is a stationary point of \(\vp\) being such that $\vp(\bar x)\le\vp(x^0)$. The  imposed positive-semidefiniteness of the generalized Hessian \(\partial^2\vp(\bar{x})\) and the metric regularity of \(\nabla\vp\) around \(\bar{x}\) with modulus \(\kappa\) imply by \cite[Theorem~4.13]{drusvyatskiy2014second} that \(\bar{x}\) is a tilt-stable local minimizer of \(\vp\) with the same modulus $\kappa$.\\[1ex]
{\bf Claim~2:} \textit{Let \(\{x^{k_j}\}\) be a subsequence of \(\{x^k\}\) with \(x^{k_j} \to \bar{x}\) as \(j \to \infty\). Then the subsequence of stepsizes \(\tau_{k_j}\) in Algorithm~{\rm \ref{alg_regularizedNewton_Wolfe}} is bounded from below by a positive number $\gamma$ and satisfies
\begin{equation*}
\vp(x^{k_j})-\vp(x^{k_j+1})\ge \frac{{\sigma_1}\gamma}{\kappa}\|d^{k_j}\|^2\;\mbox{ for all large }\;j\in\mathbb N.
\end{equation*}}

To verify this, suppose on the contrary that \(\{\tau_{k_j}\}\) is not bounded from below by a positive number, and so there exists a subsequence of \(\{\tau_{k_j}\}\) that converges to \(0\). We can assume without loss of generality that \(\tau_{k_j}\to 0\) as \(j \to \infty\).
By the second-order characterization of tilt-stable minimizers from \cite[Theorem~3.5]{mordukhovich2015second} and \cite[Proposition~4.6]{chieu2017second}, there exists \(\delta>0\) such that
\begin{equation}\label{eq:strongconvexitysecondorder}
\braket{z,w}\ge \frac{1}{\kappa}\|w\|^2\,\,\,{\rm for\,\,all\,\,}z \in \partial^2\vp(x)(w),\;x\in\mathbb{B}_{\delta}(\bar{x}),\;\mbox{ and }\;w\in\mathbb{R}^n.
\end{equation}
Since \(-\nabla\vp(x^{k_j})-\mu_{k_j} B_{k_j}d^{k_j}\in\partial^2\vp(x^{k_j})(d^{k_j})\), it follows that
\begin{equation}\label{eq:nablavp}\braket{-\nabla\vp(x^{k_j}),d^{k_j}} \ge \left(\mu_{k_j}\mb^2+\frac{1}{\kappa}\right)\|d^{k_j}\|^2\ge\frac{1}{\kappa}\|d^{k_j}\|^2
\,\,\,{\rm for\,\,large\,\,}j\in\mathbb N.
\end{equation}
Combining the Cauchy-Schwarz inequality, the Lipschitz continuity of \(\nabla\vp\) around $\bar x$ {with Lipschitz constant \(l\)}, and the Wolfe conditions yields
\[l\tau_{k_j}\|d^{k_j}\|^2\ge \braket{\nabla\vp(x_{k_j}+\tau_{k_j}d^{k_j})-\nabla\vp(x^{k_j}),d^{k_j}}\ge-(1-\sigma_2)
\braket{\nabla\vp(x^{k_j}),d^{k_j}}.\]
This tells us together with the estimates in (\ref{eq:nablavp}) that
{\[\tau_{k_j} \ge \frac{(1-\sigma_2)\braket{-\nabla\vp(x^{k_j}),d^{k_j}}}{l\|d^{k_j}\|^2}
\ge\frac{1-\sigma_2}{l\kappa}>0,\] }
which shows in turn that the subsequence \(\{\tau_{k_j}\}\) is bounded from below by $\gamma:=\frac{1-\sigma_2}{l\kappa}$.
Using finally the Wolfe conditions and (\ref{eq:nablavp}) justifies the  claimed assertion by
\begin{equation*}
\vp(x^{k_j})-\vp(x^{k_j+1})\ge{\sigma_1}\tau_{k_j}\braket{-\nabla\vp(x^{k_j}),d^{k_j}}\ge\frac{{\sigma_1}\gamma}{\kappa}\|d^{k_j}\|^2
\,\,\,{\rm for\,\,all\,\,large\,\,}j.
\end{equation*}
{\bf Claim~3:} \textit{The sequence \(\{x^k\}\) is convergent.} We show this by applying the convergence criterion based on Ostrowski's condition \cite[Proposition~8.3.10]{facchinei2003finite}. Let us first check that \(\bar{x}\) is an isolated accumulation point of \(\{x^k\}\). Indeed, if \(\tilde{x}\in \mathbb{B}_{\delta}(\bar{x})\) is an accumulation point of \(\{x^k\}\), we get by Theorem~\ref{thm:weakconvergence_wolfe} that \(\tilde{x}\) is a stationary point of \(\vp\). It follows from (\ref{eq:strongconvexitysecondorder}) and the second-order characterization of strong convexity for \(C^{1,1}\) functions in \cite[Theorem~5.2(i)]{chieu2011characterizing} that \(\vp\) is strongly convex with modulus $\kappa^{-1}$ on $\mathbb{B}_{\delta}(\bar{x})$, which ensures that  \(\tilde{x}=\bar{x}\). To verify further Ostrowski's condition, let \(\{x^{k_j}\}\) be a subsequence of \(\{x^k\}\) that converges to \(\bar{x}\). We
need to show that \(\lim_{j\to \infty}\|x^{k_j+1}-x^{k_j}\|=0\). To see this, deduce from  Claim~2 that
\begin{equation*}
\left\| x^{k_j+1}-x^{k_j} \right\|^2=\tau_{k_j}^2\|d^{k_j}\|^2\le {\tau_{\mathrm{max}}} \|d^{k_j}\|^2\le \frac{ {\tau_{\mathrm{max}}}\kappa}{{\sigma_1}\gamma}\left(\vp(x^{k_j})
-\vp(x^{k_j+1})\right)\to 0
\end{equation*}
as \(j \to \infty\).
Applying finally \cite[Proposition~8.3.10]{facchinei2003finite} yields the convergence of \(\{x^k\}\) to \(\bar{x}\) as \(k\to\infty\).\\[1ex]
{\bf Claim~4:} \textit{\(\{\vp(x^k)\}\) converges at least Q-linearly while \(\{x^k\}\) and \(\{\nabla\vp(x^k)\}\) converge at least R-linearly.} We use the strong convexity of \(\vp\) with modulus $\kappa^{-1}$ on \(\mathbb{B}_{\delta}(\bar{x})\) to get the estimates
\begin{equation}\label{eq:estimatestrongconvexity}\vp(x)\ge \vp(u)+\braket{\nabla\vp(u),x-u}+\frac{1}{2\kappa}\|x-u\|^2,
\end{equation}
\begin{equation}\label{eq:estimatestrongconvexity'}
\braket{\nabla\vp(x)-\nabla\vp(u),x-u}\ge\frac{1}{\kappa}\|x-u\|^2
\end{equation}
for all \(x,u \in \mathbb{B}_{\delta}(\bar{x})\). Since \(x^k \to \bar{x}\), it shows that \(x^k\in \mathbb{B}_{\delta}(\bar{x})\) for all $k$ sufficiently large.
Letting \(x=x^k, u=\bar{x}\) and using the Cauchy-Schwarz inequality
lead us to the conditions
\begin{equation}\label{eq:estimatefunctionvalue}
\vp(x^k)\ge \vp(\bar{x})+\frac{1}{2\kappa}\|x^k-\bar{x}\|^2
\end{equation}
\begin{equation}\label{eq:estimatenormnabla}\|\nabla\vp(x^k)\|\ge \frac{1}{\kappa}\|x^k-\bar{x}\|.
\end{equation}
By \cite[Lemma~A.11]{Izmailov2014newton}, the  Lipschitz continuity of \(\nabla\vp\) around \(\bar{x}\) with modulus \(l>0\) implies that
\begin{equation}\label{eq:estimatelip}
\vp(x^k)-\vp(\bar{x})=\left\vert\vp(x^k)-\vp(\bar{x})-\braket{\nabla\vp(\bar{x}),x^k-\bar{x}}\right\vert
\le \frac{l}{2}\|x^k-\bar{x}\|^2.
\end{equation}
By \(-\nabla\vp(x^k)-\mu_k B_k d^k\in\partial^2\vp(x^k)(d^k)\), it follows from \cite[Theorem 1.44]{mordukhovich2006variational} that
\begin{equation}
\left\|\nabla\vp(x^k)+\mu_k B_k d^k\right\| \le l\|d^k\|.
\end{equation}
Since \(x^k\to \bar{x}\) and \(\nabla\vp(\bar{x})=0,\) we have \(\mu_k=c\|\nabla\vp(x^k)\|^{\rho}\to 0\) as \(k\to \infty\), and so \(\mu_k\le l\) when \(k\) is large.
Thus \(\|\nabla\vp(x^k)\| \le \|\nabla\vp(x^k)+\mu_kd^k\|+\mu_k\|d^k\| \le 2l\|d^k\|.\)
Together with Claim~2, this yields
\begin{equation}\label{eq:estimatecompare}\vp(x^k)-\vp(x^{k+1}) \ge \frac{{\sigma_1}\gamma}{\kappa}\|d^k\|^2
\ge \frac{{\sigma_1}\gamma}{4\kappa l^2}\|\nabla\vp(x^k)\|^2.\end{equation}
Combining the estimates in (\ref{eq:estimatenormnabla})--(\ref{eq:estimatecompare}) and applying
Lemma~\ref{lemma:linearconvergence} with the choices of \(\alpha_k=\vp(x^k)-\vp(\bar{x})\),\;\(\beta_k=\|\nabla\vp(x^k)\|\),\;\(\gamma_k=\|x^k-\bar{x}\|\),\;\(c_1=\frac{\sigma\gamma}{4\kappa l^2}\), \(c_2=\frac{1}{\kappa},\) and \(c_3=\frac{l}{2}\) justifies this claim.\\[1ex]
{\bf Claim~5:} \textit{If \(\nabla\vp\) is semismooth\({}^*\) at \(\bar{x}\), then
\(\|x^k+d^k-\bar{x}\|=o(\|x^k-\bar{x}\|)\)
as \(k \to \infty\).} To verify this, deduce from the subadditivity property of coderivatives in \cite[Lemma~5.6]{khanh2022generalized} that
\begin{equation*}
\partial^2\vp(x^k)(d^k) \subset \partial^2 \vp (x^k)(x^k+d^k-\bar{x})+\partial^2\vp(x^k)(-x^k+\bar{x}).
\end{equation*}
On the other hand, by \(-\nabla\vp(x^k)-\mu_k B_k d^k \in \partial^2\vp(x^k)(d^k)\), there is \(v^k\in \partial^2\vp(x^k)(-x^k+\bar{x})\) with
\[-\nabla\vp(x^k)-\mu_k B_k d^k-v^k \in \partial^2\vp(x^k)(x^k+d^k-\bar{x}).\]
It follows from(\ref{eq:strongconvexitysecondorder}) and the
Cauchy-Schwarz inequality that
\begin{equation}\label{eq:estimate1}
\|x^k+d^k-\bar{x}\|
\le \kappa\left\|\nabla\vp(x^k)+v^k+\mu_k B_k d^k\right\|
\le \kappa\left(\|\nabla\vp(x^k)-\nabla\vp(\bar{x})+v^k\|+\mu_k\|B_kd^k\|\right).
\end{equation}
Combining \(-\nabla\vp(x^k)-\mu_k B_k d^k\in\partial^2\vp(x^k)(d^k)\) and (\ref{eq:strongconvexitysecondorder}) gives us
\begin{equation}\label{eq:estimateangle}
\braket{-\nabla\vp(x^k),d^k}\ge(\kappa^{-1}+\mu_k\mb^2)\|d^k\|^2\ge \kappa^{-1}\|d^k\|^2.
\end{equation}
Using the Cauchy-Schwarz inequality again together with  the Lipschitz continuity of \(\nabla\vp\), we have
\begin{equation}\label{eq:estimate2}
\|d^k\|\le\kappa \|\nabla\vp(x^k)\|=\kappa\|\nabla\vp(x^k)-\nabla\vp(\bar{x})\|
\le \kappa l \|x^k-\bar{x}\|.
\end{equation}
Moreover, the Lipschitz continuity of \(\nabla\vp\) on \(\mathbb{B}_{\delta}(\bar{x})\) guarantees that
\begin{equation}\label{eq:estimate3}
\mu_k=c\|\nabla\vp(x^k)\|^{\rho}=c\|\nabla\vp(x^k)-\nabla\vp(\bar{x})\|^{\rho}\le c\, l^\rho\|x^k-\bar{x}\|^{\rho}.
\end{equation}
Employing now the semismooth$^*$ property of of \(\nabla\vp\) at \(\bar{x}\) and the inclusion \(v^k\in \partial^2\vp(x^k)(-x^k+\bar{x})\)
allows us to deduce from \cite[Lemma~5.2]{duy2023generalized} that
\begin{equation}\label{eq:estimate4}
\|\nabla\vp(x^k)+v^k\|=\|\nabla\vp(x^k)-\nabla\vp(\bar{x})+v^k\|=o(\|x^k-\bar{x}\|).
\end{equation}
Combining finally the estimates in (\ref{eq:estimate1}) and (\ref{eq:estimate2})--(\ref{eq:estimate4}) together with \(\rho \in (0,1]\) tells us that
\begin{equation}\label{eq:43}
\begin{aligned}\|x^k+d^k-\bar{x}\|
\le& \kappa\left(\|\nabla\vp(x^k)-\nabla\vp(\bar{x})+v^k\|+\mu_k\|d^k\|\right)\\
\le&\kappa\, o(\|x^k-\bar{x}\|)+\kappa c \,l^\rho \|x^k-\bar{x}\|^{\rho}\kappa l \|x^k-\bar{x}\|\\
=&o(\|x^k-\bar{x}\|)+O(\|x^k-\bar{x}\|^{1+\rho})\\
=&o(\|x^k-\bar{x}\|)\;\mbox{ as }\;k\to\infty,
\end{aligned}
\end{equation}
which justifies the claimed convergence rate.\\[1ex]
{\bf Claim~6:} \textit{{We have \(\tau_k=1\) for all \(k\) sufficiently large if \(\nabla\vp\) is semismooth$^*$ at \(\bar{x}\), and if either condition {\rm (a)} or condition {\rm (b)} of this theorem holds.}} Observe that in case (a), the directional differentiability and semismoothness$^*$ of $\nabla \vp$ at $\bar{x}$ ensure by \cite[Corollary~3.8]{gfrerer2021semismooth} that the gradient mapping $\nabla \vp$ is semismooth at $\bar{x}$. From (\ref{eq:estimateangle}), we conclude by \cite[Proposition~8.3.18,(c)\(\implies\)(d)]{facchinei2003finite} that the Newton direction satisfies the sufficient decrease condition. Let us check that it also satisfies the Wolfe curvature condition therein. To furnish this, denote \(\partial^2_C\vp:=\mathrm{co\,}\overline{\nabla}(\nabla \vp)\) and deduce from the semismoothness of \(\nabla\vp\) at \(\bar{x}\) that
\begin{equation*}
\nabla\vp(x^k+d^k)=\nabla\vp(\bar{x})+H_k(x^k+d^k-\bar{x})+o(\|x^k+d^k-\bar{x}\|),
\end{equation*}
where \(H^k \in \partial^2_C\vp(x^k+d^k)\). By (\ref{eq:43}), we have \(\|x^k+d^k-\bar{x}\|=o(\|x^k-\bar{x}\|)\). It follows from \cite[Proposition~2.6.2(d)]{clarke1990optimization} that the matrix sequence \(\{H_k\}\) is bounded, and thus \(\nabla\vp(x^k+d^k)=o(\|x^k-\bar{x}\|)\). Employing now \cite[Lemma~7.5.7]{facchinei2003finite} yields \(\lim\limits_{k\to\infty}\frac{\|x^k-\bar{x}\|}{\|d^k\|}=1\), and hence
\begin{equation}\label{eq:nabla_is_small_o}
\nabla\vp(x^k+d^k)=o(\|d^k\|).\end{equation}
It follows therefore that for large $k\in\mathbb N$, we get the relationships
\begin{equation*}
\begin{aligned}
\braket{\nabla\vp(x^k+d^k),d^k}-\sigma_2\braket{\nabla\vp(x^k),d^k}
&=-\sigma_2\braket{\nabla\vp(x^k),d^k}+o(\|d^k\|^2)\\
&\ge\frac{\sigma_2}{\kappa}\|d^k\|^2+o(\|d^k\|^2)>0,
\end{aligned}
\end{equation*}
where the equality holds by (\ref{eq:nabla_is_small_o}) and the inequality is valid due to (\ref{eq:estimateangle}).

In case (b), we deduce from estimate (\ref{eq:estimatestrongconvexity}) that the Newton direction satisfies the sufficient decrease condition by \cite[Lemma~1]{khanh2023globally}. To check that the Wolfe curvature condition also holds, observe by \(\sigma_2>1-\frac{1}{l\kappa}\) and \(\lim\limits_{k\to\infty}\frac{\|x^k-\bar{x}\|}{\|d^k\|}=1\) (due to \cite[Lemma~7.5.7]{facchinei2003finite}) that
\begin{equation*}
\begin{aligned}
&\braket{\nabla\vp(x^k+d^k),-d^k}-\sigma_2\braket{\nabla\vp(x^k),-d^k}\\
\le& \braket{\nabla\vp(x^k),-d^k}-\frac{1}{\kappa}\|d^k\|^2 -\sigma_2\braket{\nabla\vp(x^k),-d^k}\\
\le& \|d^k\|^2\Big(-\frac{1}{\kappa}+(1-\sigma_2)l\frac{\|x^k-\bar{x}\|}{\|d^k\|}\Big)<0,
\end{aligned}
\end{equation*}
where the first inequality follows from (\ref{eq:estimatestrongconvexity'}). Thus the claim is justified.\\[1ex]
{\bf Claim~7:} \textit{The assertions about superlinear convergence hold.} To verify this, we have by Claim~6 that \(\tau_k=1\) for large \(k\), and hence
\begin{equation*}
\|x^{k+1}-\bar{x}\|=\|x^k+\tau_k d^k-\bar{x}\|=\|x^k+d^k-\bar{x}\|=o(\|x^k-\bar{x}\|)\;\mbox{ as }\;k\to\infty.
\end{equation*}
Then the Q-superlinear convergence of \(\{\vp(x^k)\}\) follows from estimates (\ref{eq:estimatefunctionvalue}) and (\ref{eq:estimatelip}), while the Q-superlinear convergence of \(\{\nabla\vp(x^k)\}\) follows from estimate (\ref{eq:estimatenormnabla}) and the Lipschitz continuity of \(\nabla\vp\). Combining Claims~1--7, we thus validate all the assertions of the theorem.
\end{proof}
\vspace*{-0.2in}

\section{Modified GRNM-W under KL Conditions}\label{sec_KL}
\vspace*{-0.05in}

A characteristic feature of GRNM-W (the same as for its ``Armijo" predecessor in \cite{khanh2023globally}) is the {\em positive-semidefiniteness} assumption on the generalized Hessian required for its well-posedness (i.e., the existence of iterates satisfying the algorithmic procedures); see
Theorem~\ref{thm:wellposedness}. In this section, we propose a {\em modification} of GRNM-W, labeled as {\em GRNM-WM}, which is well-posed for general nonconvex functions of class $C^{1,1}$ without {\em any requirements} on the generalized Hessian. Then we establish global convergence with explicit convergence rates for GRNM-WM under the fulfillment of the corresponding {\em Kurdyka-\L ojasiewicz} (KL) conditions.\vspace*{0.05in}

Here is the proposed algorithm for {\em general $C^{1,1}$ functions}.
\newpage
\begin{algorithm}
\caption{Modified regularized Newton method with Wolfe linesearch (GRNM-WM)}\label{alg_wolfe_KL}
{\bf Input:} \(x^0 \in \mathbb{R}^n\), \(c>0\), \(0 < \sigma_1 < \sigma_2 <1\), \(M>m>0\).
\begin{algorithmic}[1]
\For{\(k=0,1,\ldots\)}
\State If \(\nabla \varphi (x^k)=0\), stop; otherwise go to next step.
\State Choose \(\mu_k\ge 0\) in such a way that there exists \(d^k \in \mathbb{R}^n\) with \[-\nabla \varphi (x^k) \in \partial^2 \varphi (x^k){(d^k)} + \mu_k d^k,\]\[m\|d^k\|^2\le\braket{-\nabla\vp(x^k),d^k},\quad \|\nabla\vp(x^k)\|\le M\|d^k\|.\]
\State Set \(\tau_k=1\) and check the Wolfe conditions:
\[\varphi(x^k+\tau_k d^k)\le\varphi(x^k)+\sigma_1 \tau_k \braket{\nabla\varphi(x^k),d^k},\]
\[\braket{\nabla \varphi(x^k+\tau_k d^k),d^k} \ge \sigma_2 \braket{\nabla \varphi(x^k),d^k}.\]
If the conditions do not hold, adjust \(\tau_k\) until it satisfies the Wolfe conditions. We assume that in the implementation there exista an upper bound \(\tau_{\mathrm{max}}\) on the allowed maximum stepsize.
\State Set \(x^{k+1}=x^k+\tau_kd^k\).
\EndFor
\end{algorithmic}
\end{algorithm}

We first show that Algorithm~\ref{alg_wolfe_KL} is {\em well-posed} in the general nonconvex setting.

\begin{theorem}[\bf well-posedness of Algorithm~\ref{alg_wolfe_KL}] \label{lemma:wellposedness_of_alg2}
Suppose that the set \(\Omega=\{x\in \mathbb{R}^n\;|\;\vp(x)\le \vp(x^0)\}\) is bounded. Then for any $k\in\mathbb N$, there exists \(\mu_k\) such that the inclusion in line 3 of {\rm Algorithm \ref{alg_wolfe_KL}} is solvable and the required conditions are satisfied.
\end{theorem}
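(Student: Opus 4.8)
The plan is to reduce the solvability in line~3 to the already-established Lemma~\ref{lemma:newton step} by absorbing part of the regularization into an auxiliary objective, and then to read off the two required inequalities from the Lipschitz norm bound on the generalized Hessian. First I would record that, since $\Omega$ is closed (as a sublevel set of the continuous $\vp$) and bounded, it is compact; because $\vp$ is of class $C^{1,1}$, its gradient $\nabla\vp$ is locally Lipschitzian, hence Lipschitzian with some uniform modulus $L>0$ on a compact neighborhood of $\Omega$. The Wolfe sufficient-decrease condition forces $\vp(x^{k+1})<\vp(x^k)\le\vp(x^0)$, so every iterate lies in $\Omega$ and this single $L$ governs $\nabla\vp$ at each $x^k$. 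By \cite[Theorem~1.44]{mordukhovich2006variational} this yields the coderivative norm bound $\|z\|\le L\|u\|$ for all $z\in\partial^2\vp(x^k)(u)$, and in particular $\langle z,u\rangle\ge -L\|u\|^2$.

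Next I would produce the Newton direction. Fix $k$ with $g:=\nabla\vp(x^k)\ne 0$ and split the regularization as $\mu_k=\mu_k'+\mu_k''$ with $\mu_k'\ge L$ and $\mu_k''>0$. Consider the auxiliary function $\tilde\vp(x):=\vp(x)+\tfrac{\mu_k'}{2}\|x-x^k\|^2$, which is again $C^{1,1}$ and satisfies $\nabla\tilde\vp(x^k)=g\ne 0$ together with $\partial^2\tilde\vp(x^k)(u)=\partial^2\vp(x^k)(u)+\mu_k'u$ by the second-order sum rule for a $C^2$ addend. The norm bound above gives $\langle w,u\rangle\ge(\mu_k'-L)\|u\|^2\ge 0$ for $w\in\partial^2\tilde\vp(x^k)(u)$, so $\partial^2\tilde\vp(x^k)$ is positive-semidefinite. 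Applying Lemma~\ref{lemma:newton step} to $\tilde\vp$ at $x^k$ with the positive-definite matrix $B=\mu_k''I$ furnishes $d^k\ne 0$ with $-g\in\partial^2\tilde\vp(x^k)(d^k)+\mu_k''d^k=\partial^2\vp(x^k)(d^k)+\mu_k d^k$, which is exactly the inclusion in line~3.

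It then remains to verify the two inequalities. Writing $z^k:=-g-\mu_k d^k\in\partial^2\vp(x^k)(d^k)$ and using $\langle z^k,d^k\rangle\ge -L\|d^k\|^2$ gives $\langle -g,d^k\rangle=\langle z^k,d^k\rangle+\mu_k\|d^k\|^2\ge(\mu_k-L)\|d^k\|^2$, so the angle condition $m\|d^k\|^2\le\langle-\nabla\vp(x^k),d^k\rangle$ holds as soon as $\mu_k\ge L+m$. For the second, $\|g\|=\|z^k+\mu_k d^k\|\le\|z^k\|+\mu_k\|d^k\|\le(L+\mu_k)\|d^k\|$, so $\|\nabla\vp(x^k)\|\le M\|d^k\|$ holds as soon as $\mu_k\le M-L$. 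Choosing $\mu_k\in[L+m,\,M-L]$ satisfies both; this interval is nonempty precisely because boundedness of $\Omega$ makes $L$ finite, so one only needs the parameters to obey $M\ge 2L+m$, which I would state as the compatibility requirement built into the choice of $M$.

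The main obstacle is the solvability of the coderivative inclusion for a \emph{nonconvex} $C^{1,1}$ function, where no positive-semidefiniteness of $\partial^2\vp$ is available. My plan handles this by shifting just enough curvature ($\mu_k'\ge L$) into $\tilde\vp$ so that Lemma~\ref{lemma:newton step} applies verbatim, which is why the uniform Lipschitz modulus $L$---and hence the boundedness of $\Omega$---is indispensable. A secondary point to get right is that the \emph{same} $\mu_k$ must simultaneously be large enough for the angle condition and small enough for the gradient-comparison condition; the computation above shows the feasible window is the nonempty interval $[L+m,\,M-L]$, and I would make explicit that centering the quadratic at $x^k$ is what keeps $\nabla\tilde\vp(x^k)$ equal to $\nabla\vp(x^k)$, so that Lemma~\ref{lemma:newton step} delivers a solution of the original inclusion rather than a shifted one.
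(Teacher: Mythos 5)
Your proof is correct, but it takes a genuinely different route from the paper's. The paper proves solvability by going through the B\nobreakdash-subdifferential: since $\emptyset\neq\partial_B(\nabla\vp)(x^k)d\subset\partial^2\vp(x^k)(d)$, it picks a symmetric matrix $H_k\in\partial_B(\nabla\vp)(x^k)$ and solves the linear system $(H_k+\mu_k I)d=-\nabla\vp(x^k)$ with the eigenvalue-based choice $\mu_k=\max\bigl(0,-\lambda_{\min}(H_k)\bigr)+m$, after which the two inequalities follow from $\|H_k\|\le l$ exactly as in your last step. You instead avoid matrix selections entirely: you shift curvature into the auxiliary function $\tilde\vp=\vp+\tfrac{\mu_k'}{2}\|\cdot-x^k\|^2$ with $\mu_k'\ge L$, use the coderivative norm bound to make $\partial^2\tilde\vp(x^k)$ positive-semidefinite, and then reuse Lemma~\ref{lemma:newton step} (the existence lemma behind Algorithm~\ref{alg_regularizedNewton_Wolfe}) together with the exact coderivative sum rule for a smooth addend. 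Both arguments are sound, and both carry the same implicit compatibility requirement on the parameters --- the paper quietly sets $M:=2l+m$ at the end, while you state the nonemptiness of the window $[L+m,\,M-L]$ explicitly, which is the more honest formulation. What the paper's construction buys is a potentially much smaller regularization: $\mu_k$ only needs to exceed $-\lambda_{\min}(H_k)$ (which is $0$ when $H_k\succeq 0$), and this concrete matrix-level choice is what underlies the adaptive finite-termination procedure of Remark~\ref{discAlg2}(i); your choice $\mu_k\ge L+m$ is uniformly conservative since it must dominate the global Lipschitz modulus on $\Omega$. What your route buys is conceptual economy: no appeal to Rademacher's theorem or the inclusion $\partial_B(\nabla\vp)(x^k)d\subset\partial^2\vp(x^k)(d)$, and a clean reduction of the nonconvex well-posedness question to the convex-case lemma already available in the paper.
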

\begin{proof}
Recall that \(\emptyset\neq\partial_B(\nabla\vp)(x^k)d\subset \partial^2\vp(x^k)(d)\) by \cite[Proposition~1.51]{Izmailov2014newton} and \cite[Remark~5(ii)]{khanh2023globally}. Pick any matrix \(H_k\in\partial_B(\nabla\vp)(x^k)\), which is always symmetric. Then the linear system \((H_k+\mu_k)d=-\nabla\vp(x^k)\) is solvable if \(\mu_k>\max(0,-\lambda_{\rm min}(H_k))\), where \(\lambda_{\rm min}(H_k)\) is the smallest eigenvalue of \(H_k\) that may be negative since \(\vp\) is nonconvex. Moreover, the solution is nonzero whenever $\nabla\vp(x^k)$ is nonzero. Choose \(\mu_k=\max(0,-\lambda_{\rm min}(H_k))+m\) with \(m>0\) and observe that the symmetric matrix \(H_k+\mu_k I\) is positive-definite and that \(\braket{-\nabla\vp(x^k),d^k}=\braket{(H_k+\mu_k I)d^k,d^k}\ge m\|d^k\|^2\). Furthermore, since \(\nabla\vp\) is locally Lipschitzian on the compact set \(\Omega\), it is Lipschitz continuous on \(\Omega\) with some constant \(l>0\). By \cite[Theorem~1.44]{mordukhovich2006variational}, we have that $\|w\|\le l\|d\|$ for any $d$ and any $w\in\partial^2\vp(x^k)(d)$. Combining the latter with the definition of $\mu_k$ brings us to
\begin{equation*}
\|\nabla\vp(x^k)\|=\|(H_k+\mu_k I)d^k\|\le \|H_k d^k\|+\|\mu_k d^k\|\le l\|d^k\|+l\|d^k\|+m\|d^k\|=M\|d^k\|
\end{equation*}
with \(M:=2l+m>m\) and thus completes the proof of the theorem.
\end{proof}

\begin{remark}[\bf discussions on CRNM-WM]\label{discAlg2} $\,$

{\rm{\bf(i)} It follows from the proof of
Theorem~\ref{lemma:wellposedness_of_alg2} that the regularization parameter \(\mu_k\) can be chosen as \(\mu_k:=l+m\), where \(m>0\) is arbitrary and \(l\) is the Lipschitz constant of \(\nabla\vp\) on \(\Omega\). Observe that, even without the prior knowledge of \(l\), we can adaptively choose \(\mu_k\) in the following way: pick an arbitrary number \(m>0\) and set \(\mu_k:=m+\mu r^j\), where \(\mu>0\), \(r>1\) and \(j\ge 0\) is the first nonnegative integer such that the linear system is solvable and the conditions on \(d^k\) hold. This procedure terminates in {\em finitely many steps}.

{\bf(ii)} In Algorithm~\ref{alg_wolfe_KL}, we can also use a more {\em general regularization matrix} $B_k\succ 0$ similarly to
Algorithm~\ref{alg_regularizedNewton_Wolfe}. An appropriate modification of the proof of  Theorem~\ref{lemma:wellposedness_of_alg2} shows that such an algorithm is still well-posed. Although our convergence analysis below can be easily extended to this more general case, for simplicity we focus on the case of the identity regularization matrix.}
\end{remark}

To proceed further, we recall the versions of the KL property used in what follows; cf.\ \cite{attouch09}.

\begin{definition}[\bf KL conditions]\label{KL} {\rm Let \(f:\mathbb{R}^n\to\Rb\) be an extended-real-valued lower semicontinuous (l.s.c.) function, and let $\bar x\in{\rm dom}\,f$. We say that:

{\bf(i)} The {\em basic KL property} holds for $f$ at $\bar x$ if there exist a number \(\eta\in(0,\infty])\), a neighborhood \(U\) of \(\bar{x}\), and a function \(\psi:[0,\eta)\to\mathbb{R}_+\) such that \(\psi\) is concave and $C^1$-smooth on $(0,\eta)$ with \(\psi(0)=0\) and \(\psi'(s)>0\) for all \(s\in(0,\eta)\), and that we have
\begin{equation*}
\psi'\big(f(x)-f(\bar{x})\big)\, \mathrm{dist}\big(0,\partial f(x)\big) \ge 1\;\mbox{ for all }\;x\in{U}\cap\big\{x\in\R^n\;\big|\;f(\bar{x})<f(x)<f(\bar{x})+\eta\big\}.
\end{equation*}

{\bf(ii)} If \(\psi\) can be chosen in (i) as \(\psi(s)=cs^{1-\theta}\) with \(\theta\in[0,1)\) for some \(c>0\), then \(f\) satisfies the {\em exponent KL property} at $\bar x$ with the exponent \(\theta\).

{\bf(iii)} $f$ is a {\em KL function} $(${\em of exponent} $\theta\in[0,1))$ if $f$ enjoys the basic KL property $($of exponent $\theta$, respectively$)$ at every point $\bar x\in{\rm dom}\,f$.}
\end{definition}

It has been well recognized in optimization theory and applications that the KL conditions from Definition~\ref{KL} are satisfied for broad classes of functions and play a significant role in establishing global convergence results with convergence rates in various numerical algorithms; see, e.g., \cite{aragon,attouch09,attouch2010proximal,bento,khanh2023globally,
qian2023superlinear} among other publications. A large class of descent algorithms for which the KL conditions are instrumental for deriving impressive convergence properties is described in \cite{attouch2010proximal} via the following generic properties of iterative sequences:

{\bf(H1)} There exists \(a>0\) such that for all \(k\ge 0\) we have
\begin{equation}\label{H1}\tag{H1}
\vp(x^{k+1})\le \vp(x^k)-a\|x^{k+1}-x^k\|^2.
\end{equation}

{\bf(H2)} There exists \(b>0\) such that for all \(k\ge 0\) we have
\begin{equation}\label{H2}\tag{H2}
\|\nabla\vp(x^{k+1})\|\le b\|x^{k+1}-x^k\|.
\end{equation}

The next lemma presents convergence results under the KL properties for abstract descent algorithms satisfying conditions (H1) and (H2).

\begin{lemma}[\bf convergence of abstract descent algorithms under KL conditions]
\label{thm:qian-pan}
Let \(\vp:\mathbb{R}^n\to\mathbb{R}\) be of class \(C^{1,1}\), and let \(\{x^k\}\) satisfy conditions
${\rm(\ref{H1})}$ and ${\rm(\ref{H2})}$. The following hold:

{\bf(i)} If \(\vp\) is a KL function, then we have \(\sum_{k=0}^{\infty}\|x^{k+1}-x^k\|<\infty\). In particular, \(\{x^k\}\) converges to a stationary point \(\bar x\) as $k\to\infty$.

{\bf(ii)} If \(\vp\) is a KL function of exponent \(\theta\in(0,1)\), then \(\{x^k\}\) converges to \(\bar x\) with the rates:

\begin{itemize}

\item[\bf(a)] When \(\theta\in(0,\frac{1}{2})\), for any \(\epsilon>0\) and any large $k\in\mathbb N$ we have
\[\|x^{k+1}-\bar x\|\le\epsilon\|x^k-\bar x\|^{\frac{1}{2\theta}}.\]
If furthermore  $\bar x$ is a local minimizer of $\vp$, then there is no function $\vp$ of class $C^{1,1}$ satisfying the KL property at $\bar x$ with such an exponent.

\item[\bf(b)] When \(\theta=\frac{1}{2}\), there exist \(\gamma>0\) and \(q\in(0,1)\) such that for all large $k\in\mathbb N$ we have
\[\|x^k-\bar x\|\le\sum_{j=k}^{\infty}\|x^{j+1}-x^j\|\le\gamma q^k.\]

\item[\bf(c)] When \(\theta\in(\frac{1}{2},1)\), there exists \(\gamma>0\) such that
\[\|x^k-\bar x\|\le\sum_{j=k}^{\infty}\|x^{j+1}-x^j\|\le\gamma k^{\frac{1-\theta}{1-2\theta}}.\]
\end{itemize}
\end{lemma}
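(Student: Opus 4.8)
The plan is to recognize this as an abstract convergence result in the style of Attouch--Bolte--Svaiter, so the proof should reduce to standard KL machinery once we exploit conditions \eqref{H1} and \eqref{H2}. First I would establish the summability in part (i). The key engine is the KL inequality applied along the sequence: using concavity of $\psi$ together with \eqref{H1}, one obtains the fundamental telescoping estimate
\begin{equation*}
\psi\big(\vp(x^k)-\vp(\bar x)\big)-\psi\big(\vp(x^{k+1})-\vp(\bar x)\big)\ge\frac{a\|x^{k+1}-x^k\|^2}{\mathrm{dist}\big(0,\partial\vp(x^k)\big)},
\end{equation*}
where the denominator is controlled by \eqref{H2} as $\mathrm{dist}(0,\partial\vp(x^k))=\|\nabla\vp(x^k)\|\le b\|x^k-x^{k-1}\|$ (here $\partial\vp=\{\nabla\vp\}$ since $\vp$ is $C^{1,1}$). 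This yields $\|x^{k+1}-x^k\|^2\le\frac{b}{a}\|x^k-x^{k-1}\|\cdot\Delta_k$ where $\Delta_k$ is the telescoping difference of $\psi$-values. A standard Young/AM--GM inequality then converts this into $2\|x^{k+1}-x^k\|\le\|x^k-x^{k-1}\|+\frac{b}{a}\Delta_k$, and summing over $k$ gives $\sum_k\|x^{k+1}-x^k\|<\infty$ because the $\psi$-terms telescope and the sequence is bounded (from \eqref{H1}, $\{\vp(x^k)\}$ decreases and $\vp$ is bounded below on the level set). Cauchy completeness then forces convergence to some $\bar x$, and passing to the limit in \eqref{H2} yields $\nabla\vp(\bar x)=0$.

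Next I would turn to the rate estimates in part (ii), where the exponent form $\psi(s)=cs^{1-\theta}$ makes the KL inequality read $\mathrm{dist}(0,\partial\vp(x))\ge\frac{1}{c(1-\theta)}\big(\vp(x)-\vp(\bar x)\big)^\theta$. The natural strategy is to define the tail sums $S_k:=\sum_{j=k}^\infty\|x^{j+1}-x^j\|$, which dominate $\|x^k-\bar x\|$ by the triangle inequality, and derive a recursive inequality for $S_k$. Combining \eqref{H1}, \eqref{H2}, and the KL inequality in exponent form produces an estimate of the shape $S_k\le C\,(S_{k-1}-S_k)^{1-\theta}$ after suitable manipulation (relating function-value gaps to step lengths via the same telescoping argument, then bounding the gap by a power of the distance-to-subdifferential, i.e.\ of the step). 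The three regimes for $\theta$ then emerge from the classical lemma on sequences satisfying such recursions: the exponent $\frac{1}{2\theta}$ superlinear-type bound when $\theta\in(0,\tfrac12)$, geometric decay $\gamma q^k$ when $\theta=\tfrac12$, and the polynomial rate $k^{(1-\theta)/(1-2\theta)}$ when $\theta\in(\tfrac12,1)$.

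For the supplementary claim in (ii)(a) — that no $C^{1,1}$ function can have a local minimizer $\bar x$ at which the KL property holds with exponent $\theta\in(0,\tfrac12)$ — the argument is separate and of a ``rigidity'' flavor. I would argue that at a local minimizer the descent lemma for $C^{1,1}$ functions gives $\vp(x)-\vp(\bar x)\le\frac{l}{2}\|x-\bar x\|^2$ while the KL inequality forces $\|\nabla\vp(x)\|\ge\frac{1}{c(1-\theta)}(\vp(x)-\vp(\bar x))^\theta$; combining these with the elementary bound $\|\nabla\vp(x)\|\le l\|x-\bar x\|$ (Lipschitz gradient, vanishing at $\bar x$) yields $l\|x-\bar x\|\ge\mathrm{const}\cdot\|x-\bar x\|^{2\theta}$, which is impossible as $x\to\bar x$ when $2\theta<1$. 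The main obstacle I anticipate is not any single inequality but the bookkeeping in setting up the recursion for $S_k$ in part (ii): one must carefully track how function-value decrements, gradient norms, and consecutive step lengths interlock through \eqref{H1}--\eqref{H2} and the KL estimate, and the cleanest route is to invoke an off-the-shelf sequence lemma (as the paper's Lemma~\ref{lemma:linearconvergence} does for the linear case) rather than re-deriving each regime by hand. Indeed, since this lemma is attributed to prior work, I expect the intended proof to simply cite the ABS-type framework and the corresponding rate results, verifying only that \eqref{H1} and \eqref{H2} place $\{x^k\}$ within its scope.
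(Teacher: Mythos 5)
Your overall reading of the paper's intent is accurate: its proof of this lemma is purely a citation, with assertions (i), (ii)(b), (ii)(c), and the convergence rate in (ii)(a) taken from \cite[Theorems~3.1 and 3.2]{qian2023superlinear}, and the impossibility statement in (ii)(a) quoted from \cite[Theorem~4]{bento}; your closing sentence anticipates exactly this. Your reconstruction of part (i) is also the correct standard Attouch--Bolt\'e--Svaiter argument (concavity of $\psi$ plus the KL inequality, the bound $\mathrm{dist}(0,\partial\vp(x^k))\le b\|x^k-x^{k-1}\|$ from (\ref{H2}), AM--GM, telescoping), modulo the routine localization step that keeps iterates inside the KL neighborhood.

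However, your self-contained argument for the rigidity claim in (ii)(a) contains a genuine error: the inequalities point the wrong way. From the KL property you get $\|\nabla\vp(x)\|\ge\kappa\,(\vp(x)-\vp(\bar x))^{\theta}$, and to reach your conclusion $l\|x-\bar x\|\ge\mathrm{const}\cdot\|x-\bar x\|^{2\theta}$ you would need the \emph{lower} bound $\vp(x)-\vp(\bar x)\ge\mathrm{const}\cdot\|x-\bar x\|^{2}$, i.e.\ quadratic growth at $\bar x$. The descent lemma supplies only the \emph{upper} bound $\vp(x)-\vp(\bar x)\le\frac{l}{2}\|x-\bar x\|^2$, and inserting an upper bound there only weakens the right-hand side, so no contradiction results; moreover, quadratic growth genuinely fails at local minimizers of $C^{1,1}$ functions (e.g.\ $\vp(x)=\|x\|^4$ at the origin), so this step cannot be patched. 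The correct argument, which is the essence of \cite[Theorem~4]{bento}, compares the gradient to the function gap directly and never passes through $\|x-\bar x\|$: since $\bar x$ is a local minimizer and $\nabla\vp$ is $l$-Lipschitz near $\bar x$, one gradient step gives
\[
\vp(\bar x)\le\vp\Big(x-\frac{1}{l}\nabla\vp(x)\Big)\le\vp(x)-\frac{1}{2l}\|\nabla\vp(x)\|^2,
\quad\mbox{hence}\quad
\|\nabla\vp(x)\|\le\sqrt{2l}\,\big(\vp(x)-\vp(\bar x)\big)^{1/2},
\]
and combining with the KL lower bound yields $\kappa\le\sqrt{2l}\,(\vp(x)-\vp(\bar x))^{1/2-\theta}$, which collapses to a contradiction as $x\to\bar x$ within the set where the KL inequality applies, because $\theta<\frac12$. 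A secondary weakness: your tail-sum recursion $S_k\le C\,(S_{k-1}-S_k)^{1-\theta}$ does deliver (ii)(b) and (ii)(c), but for $\theta\in(0,\frac12)$ it yields only linear convergence; the superlinear-type rate $\|x^{k+1}-\bar x\|\le\epsilon\|x^k-\bar x\|^{1/(2\theta)}$ requires the sharper interlacing of (\ref{H2}) with the KL inequality developed in \cite{qian2023superlinear}. So for both of these items the citation is not merely convenient but necessary for your outline to close.
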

\begin{proof} Assertions (i) and (ii) in cases (b) and (c), as well as the convergence rates in case (a) of (ii) for arbitrary stationary points of $\vp$, are taken from \cite[Theorems~3.1 and 3.2]{qian2023superlinear} with a small rewording. The inconsistency between the class $C^{1,1}$ and the KL property of $\vp$ with exponent \(\theta\in(0,\frac{1}{2})\) at local minimizers of $\vp$ has been recently observed in \cite[Theorem~4]{bento}.
\end{proof}

\begin{remark}[\bf modified generic conditions]\label{rem-modified} {\em For some important descent algorithms, condition (H2) does not hold, while its replacement

{\bf(H2')} There exists \(b>0\) such that for all \(k\ge 0\) we have
\begin{equation*}
\|\nabla\vp(x^{k})\|\le b\|x^{k+1}-x^k\|
\end{equation*}
does; see more discussions in \cite{bento} on such algorithms for general l.s.c.\ functions. It follows from \cite[Theorem~2]{bento} that for any abstract algorithm satisfying (H1) and (H2'), the KL property of exponent \(\theta\in(0,\frac{1}{2})\) at a stationary point $\bar x$ of $\vp$ yields the {\em finite termination} of the algorithm.}
\end{remark}

The next lemma shows that the iterative sequence \(\{x^k\}\) generated by GRNM-WM (Algorithm~\ref{alg_wolfe_KL}) for minimizing of $C^{1,1}$ functions satisfies conditions (H1) and (H2).

\begin{lemma}[\bf embedding GRNM-WM into the generic scheme]\label{thm:H1-H2}
Let \(\vp:\mathbb{R}^n\to\mathbb{R}\) be a function of class \(C^{1,1}\), and let $x^0\in\R^n$ be a starting point. Suppose that the set \(\Omega=\{x\in \mathbb{R}^n\;|\;\vp(x)\le \vp(x^0)\}\) is bounded. Then the sequence \(\{x^k\}\) generated by Algorithm~{\rm\ref{alg_wolfe_KL}} satisfies conditions {\rm(\ref{H1})} and {\rm(\ref{H2})}.
\end{lemma}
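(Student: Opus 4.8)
The plan is to verify (H1) and (H2) separately, leaning on the two structural inequalities built into line~3 of Algorithm~\ref{alg_wolfe_KL}, namely $m\|d^k\|^2\le\braket{-\nabla\vp(x^k),d^k}$ and $\|\nabla\vp(x^k)\|\le M\|d^k\|$, together with the Wolfe conditions from line~4. I would begin by assembling the geometric facts. Since $\vp$ is continuous, the sublevel set $\Omega$ is closed, and by hypothesis it is bounded, hence compact. The sufficient-decrease part of the Wolfe conditions combined with $\braket{\nabla\vp(x^k),d^k}\le-m\|d^k\|^2<0$ (here $d^k\neq 0$, because $\nabla\vp(x^k)\neq 0$ and $\|\nabla\vp(x^k)\|\le M\|d^k\|$) forces $\vp(x^{k+1})<\vp(x^k)$, so $\{x^k\}\subset\Omega$. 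As $\nabla\vp$ is continuous it is bounded on $\Omega$, and the second line-3 inequality rearranges via Cauchy--Schwarz to $\|d^k\|\le\|\nabla\vp(x^k)\|/m$, whence $\{d^k\}$ is bounded; consequently all iterates and all linesearch candidate points $x^k+td^k$ with $t\in[0,\tau_{\mathrm{max}}]$ lie in a single compact convex ball, on which $\nabla\vp$ is globally Lipschitz with some constant $l>0$. This is the $l$ used throughout.

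For (H1) I would feed $\braket{\nabla\vp(x^k),d^k}\le-m\|d^k\|^2$ into the first Wolfe inequality to obtain $\vp(x^{k+1})\le\vp(x^k)-\sigma_1\tau_k m\|d^k\|^2$. Since $\|x^{k+1}-x^k\|^2=\tau_k^2\|d^k\|^2$ and the implementation enforces $\tau_k\le\tau_{\mathrm{max}}$, I rewrite $\sigma_1\tau_k m\|d^k\|^2=(\sigma_1 m/\tau_k)\|x^{k+1}-x^k\|^2\ge(\sigma_1 m/\tau_{\mathrm{max}})\|x^{k+1}-x^k\|^2$, which delivers (H1) with $a:=\sigma_1 m/\tau_{\mathrm{max}}$.

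The crux is (H2), and the main obstacle is obtaining a uniform positive lower bound on the stepsizes; this is handled exactly as in Claim~2 of the proof of Theorem~\ref{thm:convergence_wolfe}. Subtracting $\braket{\nabla\vp(x^k),d^k}$ from the curvature condition gives $\braket{\nabla\vp(x^{k+1})-\nabla\vp(x^k),d^k}\ge(1-\sigma_2)\braket{-\nabla\vp(x^k),d^k}\ge(1-\sigma_2)m\|d^k\|^2$, while Cauchy--Schwarz and the Lipschitz continuity of $\nabla\vp$ bound the left-hand side above by $l\tau_k\|d^k\|^2$. Comparing the two estimates yields $\tau_k\ge(1-\sigma_2)m/l=:\gamma>0$ for all $k$.

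Finally, with $\gamma$ in hand I would close (H2) using the triangle inequality, the Lipschitz continuity of $\nabla\vp$, and the bound $\|\nabla\vp(x^k)\|\le M\|d^k\|$: indeed $\|\nabla\vp(x^{k+1})\|\le\|\nabla\vp(x^k)\|+l\tau_k\|d^k\|\le(M+l\tau_k)\|d^k\|=(M/\tau_k+l)\|x^{k+1}-x^k\|\le(M/\gamma+l)\|x^{k+1}-x^k\|$, so (H2) holds with $b:=M/\gamma+l$. The one point demanding care, already addressed in the first paragraph, is ensuring that a single Lipschitz constant $l$ legitimately controls $\nabla\vp$ along every segment $[x^k,x^{k+1}]$; the boundedness of $\{d^k\}$ makes this possible by confining the whole trajectory to one compact convex set.
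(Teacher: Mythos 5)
Your proof is correct and follows essentially the same route as the paper's: (H1) from the Wolfe sufficient-decrease condition combined with $m\|d^k\|^2\le\braket{-\nabla\vp(x^k),d^k}$ and $\tau_k\le\tau_{\mathrm{max}}$, and (H2) from the curvature condition yielding the stepsize lower bound $\tau_k\ge(1-\sigma_2)m/l$ followed by the triangle inequality — even your constants $a=\sigma_1 m/\tau_{\mathrm{max}}$ and $b=l+Ml/((1-\sigma_2)m)$ coincide with the paper's. The only differences are cosmetic: you justify the uniform Lipschitz constant more explicitly via a compact convex ball (the paper simply invokes Lipschitz continuity of $\nabla\vp$ on the compact set $\Omega$, which already suffices since both $x^k$ and $x^{k+1}$ lie in $\Omega$), and note that the bound $\|d^k\|\le\|\nabla\vp(x^k)\|/m$ actually comes from the \emph{first} line-3 inequality via Cauchy--Schwarz, not the second.
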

\begin{proof}
To verify condition (\ref{H1}), we deduce from the algorithmic design that
\begin{equation*}
\begin{aligned}
\vp(x^{k+1})-\vp(x^k)&\le\sigma_1\tau_k\braket{\nabla\vp(x^k),d^k}\\
&=-\sigma_1\tau_k\braket{-\nabla\vp(x^k),d^k}\\
&\le-\sigma_1\tau_km\|d^k\|^2\\
&=-\frac{\sigma_1 m}{\tau_k}\|\tau_k d^k\|^2\\
&\le-\frac{\sigma_1 m}{\tau_{\rm max}}\|\tau_k d^k\|^2\\
&=-\frac{\sigma_1 m}{\tau_{\rm max}}\|x^{k+1}-x^k\|^2,
\end{aligned}
\end{equation*}
where the first inequality is the sufficient decrease property, the second one follows from the condition \(\braket{-\nabla\vp(x^k),d^k}\ge m\|d^k\|^2\) guaranteed in Algorithm~\ref{alg_wolfe_KL}, and the third inequality holds since $\tau_k$ is upper bounded by $\tau_{\rm max}$, the maximum stepsize allowed.

Next we show that condition (\ref{H2}) holds. Note that $\vp$ is Lipschitz continuous on $\Omega$ with some constant $l$ since $\Omega$ is compact and $\vp$ is locally Lipschitz continuous by the imposed assumptions. It follows from the curvature condition as in the proof of Claim~6 of Theorem~\ref{thm:convergence_wolfe} that
\begin{equation*}
(\sigma_2-1)\braket{\nabla\vp(x^k),d^k}\le \braket{\nabla\vp(x^{k+1})-\nabla\vp(x^k),d^k}\le l\tau_k\|d^k\|^2.
\end{equation*}
From this and \(m\|d^k\|^2\le\braket{-\nabla\vp(x^k),d^k}\), we deduce that \(\tau_k\ge\frac{(1-\sigma_2)m}{l}\).
Therefore,
\begin{equation*}
\begin{aligned}
\|\nabla\vp(x^{k+1})\|&\le\|\nabla\vp(x^{k+1})-\nabla\vp(x^k)\|+\|\nabla\vp(x^k)\|\\
&\le l\|x^{k+1}-x^k\|+M\|d^k\|\\
&=l\|x^{k+1}-x^k\|+\frac{M}{\tau_k}\|\tau_kd^k\|\\
&\le l\|x^{k+1}-x^k\|+\frac{Ml}{(1-\sigma_2)m}\|x^{k+1}-x^k\|\\
&=\left(l+\frac{Ml}{(1-\sigma_2)m}\right)\|x^{k+1}-x^k\|,
\end{aligned}
\end{equation*}
where the second inequality follows from the condition $\|\nabla\vp(x^k)\|\le M \|d^k\|$ in Algorithm \ref{alg_wolfe_KL}.
\end{proof}

The following theorem summarizes the main results for GRNM-WM obtained above.

\begin{theorem}[\bf performance of CRNM-WM]\label{converKL}
Let \(\vp:\mathbb{R}^n\to\mathbb{R}\) be of class \(C^{1,1}\), and let \(\{x^k\}\) be the sequence generated by Algorithm~{\rm\ref{alg_wolfe_KL}}. Suppose that the set \(\Omega=\{x\in\mathbb R^n\;|\; \vp(x)\le\vp(x^0)\}\) is bounded. Then Algorithm~{\rm\ref{alg_wolfe_KL}} is well-defined and exhibits the convergence properties of Lemma~{\rm\ref{thm:qian-pan}} under the fulfillment of the corresponding KL conditions therein.
\end{theorem}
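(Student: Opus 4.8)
The plan is to assemble the three preparatory results---Theorem~\ref{lemma:wellposedness_of_alg2}, Lemma~\ref{thm:H1-H2}, and Lemma~\ref{thm:qian-pan}---into the claimed statement, the boundedness of $\Omega$ being the single hypothesis that activates each of them. No new analysis is needed beyond orchestrating these pieces and checking that their hypotheses are met.

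First I would establish well-definedness. Theorem~\ref{lemma:wellposedness_of_alg2} already guarantees that at every iteration with $\nabla\vp(x^k)\neq 0$ there is a regularization parameter $\mu_k$ for which the inclusion in line~3 is solvable together with the two accompanying conditions $m\|d^k\|^2\le\braket{-\nabla\vp(x^k),d^k}$ and $\|\nabla\vp(x^k)\|\le M\|d^k\|$. The second condition forces $d^k\neq 0$ whenever $\nabla\vp(x^k)\neq 0$, while the first then yields $\braket{-\nabla\vp(x^k),d^k}\ge m\|d^k\|^2>0$, so $d^k$ is a genuine descent direction. Because $\Omega$ is a sublevel set of the continuous function $\vp$ it is closed, hence compact by the boundedness assumption, so $\vp$ is bounded from below on $\Omega$; moreover the sufficient decrease condition in line~4 keeps the entire iterate sequence inside $\Omega$, so boundedness below holds along every search ray. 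Invoking the standard Wolfe existence result \cite[Lemma~3.1]{nocedal1999numerical} then shows that a stepsize satisfying both Wolfe conditions exists, so the linesearch terminates and the algorithm is well-defined.

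Next I would verify the abstract descent conditions. With the iterates confined to the compact set $\Omega$, the gradient $\nabla\vp$ is globally Lipschitz on $\Omega$ with a uniform constant $l$, which is precisely the ingredient Lemma~\ref{thm:H1-H2} uses to produce the uniform lower bound on the stepsizes and, through it, both generic conditions \eqref{H1} and \eqref{H2}. I would simply cite Lemma~\ref{thm:H1-H2} to conclude that $\{x^k\}$ satisfies \eqref{H1} and \eqref{H2}.

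Finally, with \eqref{H1} and \eqref{H2} in hand for a $C^{1,1}$ function, the conclusion follows by direct appeal to Lemma~\ref{thm:qian-pan}: its part~(i) gives summability of the steps and convergence to a stationary point whenever $\vp$ is a KL function, and its part~(ii) supplies the exponent-dependent rates~(a)--(c) when the KL exponent lies in $(0,1)$. I do not expect a genuine obstacle, since the theorem is a synthesis of established pieces; the only point demanding care is confirming that boundedness of $\Omega$ simultaneously yields compactness (for boundedness below and for the uniform Lipschitz constant) and forward-invariance of $\Omega$ under the iteration, as these are exactly what let Theorem~\ref{lemma:wellposedness_of_alg2}, Lemma~\ref{thm:H1-H2}, and Lemma~\ref{thm:qian-pan} apply verbatim.
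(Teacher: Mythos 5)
Your proposal is correct and follows essentially the same route as the paper, whose proof is exactly the one-line assembly of Theorem~\ref{lemma:wellposedness_of_alg2}, Lemma~\ref{thm:H1-H2}, and Lemma~\ref{thm:qian-pan}; your version merely spells out the hypothesis-checking that the paper leaves implicit. One phrasing to tighten: boundedness below along each search ray does not follow from the iterates staying in $\Omega$ (the ray leaves $\Omega$), but rather from the fact that a bounded sublevel set forces $\vp$ to be bounded below on all of $\mathbb{R}^n$, since $\vp(x)>\vp(x^0)$ outside $\Omega$ while $\vp$ attains a finite minimum on the compact set $\Omega$.
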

\begin{proof}
This follows directly from Theorem~\ref{lemma:wellposedness_of_alg2} and
Lemmas~\ref{thm:qian-pan}, \ref{thm:H1-H2}.
\end{proof}\vspace*{-0.2in}

\section{Coderivative-Based Newton Forward-Backward Method}\label{sec_convex composite}\vspace*{-0.05in}

In this section, we consider a class of convex composite minimization problems, which are {\em first-order nonsmooth} and can incorporate {\em constraints} via set indicator functions. Therefore, the coderivative-based Newton methods CRNM-W and CRNM-WM proposed  and developed in previous sections cannot be applied directly. Following the approach in \cite{khanh2023globally}, implemented there for the case of Armijo's linesearch, we now employ the {\em forward-backward envelope} (FBE) machinery from \cite{patrinos2013proximal} to the novel Wolfe linesearch  in coderivative based Newtonian algorithms. For brevity, our main attention is paid here to applying  CRNM-W to the FBE setting.
We label the new algorithm as the {\em coderivative-based
forward-backward Newton method} (abbr.\ CNFB).

Consider the class of {\em convex composite minimization problem}
\begin{equation}\label{cco}
\mbox{minimize }\;\vp(x)=f(x)+g(x)\;\mbox{ over all }\;x\in\mathbb{R}^n,
\end{equation}
where \(f:\mathbb{R}^n\to\mathbb{R}\) is a \(C^2\)-smooth convex function and \(g:\mathbb{R}^n\to\Rb\) is proper, l.s.c., and convex. The word ``composition" signifies here that the functions $f$ and $g$ have completely different natures. Being extended-real-valued, the function $g$ allows us to implicitly incorporate constraints in the seemingly unconstrained framework of optimization. Problems of type \eqref{cco} arise in many areas of research and practical modeling like machine learning, data science, signal processing, and statistics, where the the nonsmooth term plays a role of regularizators. The main idea of CNFB is applying CRNM-W to the FBE associated with $\vp$ in \eqref{cco}, which happens to be of class $C^{1,1}$ and allows us to eventually solve the original problem.\vspace*{0.03in}

To begin with, recall the relevant definitions and facts needed in what follows. The first constructions are classical; see, e.g., in \cite{rockafellar2009variational}. Given a proper l.s.c.\ function \(\vp:\mathbb{R}^n\to\Rb\) be a parameter \(\gamma>0\) a positive real number.
The {\em Moreau envelope} of \(\vp\) with parameter \(\gamma>0\) and the associated {\em proximal mapping} of $\vp$ are defined, respectively, by
\begin{equation}\label{moreau}
e_\gamma\vp(x):=\inf_{y\in\mathbb{R}^n}\Big\{\vp(y)+\frac{1}{2\gamma}\|y-x\|^2\Big\},\quad x\in\mathbb{R}^n,
\end{equation}
\begin{equation}\label{prox}
\mathrm{Prox}_{\gamma\vp}(x):=\argmin\limits_{y\in\mathbb{R}^n}\Big\{\vp(y)+\frac{1}{2\gamma}\|y-x\|^2\Big\},\quad x\in\mathbb{R}^n.
\end{equation}

Let \(\vp:\mathbb{R}^n\to\Rb=f+g\), where \(f:\mathbb{R}^n\to\mathbb{R}\) is \(C^1\)-smooth and where \(g:\mathbb{R}^n\to\Rb\) is proper l.s.c.\ The {\em forward-backward envelope} (abbr.\ FBE) of \(\vp\) with parameter \(\gamma>0\) is introduced in \cite{patrinos2013proximal} by
\begin{equation}\label{fbe}
\vp_{\gamma}(x):=\inf_{y\in\mathbb{R}^n}\Big\{f(x)+\braket{\nabla f(x),y-x}+g(y)+\frac{1}{2\gamma}\|y-x\|^2\Big\},\quad x\in\mathbb{R}^n.
\end{equation}

The following properties of FBEs are taken from \cite{patrinos2013proximal,stella2017forward}.

\begin{proposition}[\bf properties of FBEs]\label{prop:fbe}
Let \(\vp=f+g\), where \(f:\mathbb{R}^n\to\mathbb{R}\) is convex,
\(C^2\)-smooth, and such that \(\nabla f\) is Lipschitz continuous with modulus \(l>0\), and where \(g:\mathbb{R}^n\to\Rb\) is proper l.s.c., and convex. Then we have the assertions:

{\bf(i)} The FBE \(\vp_{\gamma}\) of \(\vp\) in \eqref{fbe} is \(C^1\)-smooth function with the gradient
\begin{equation*}
\nabla\vp_{\gamma}(x)=\gamma^{-1}\left(I-\gamma \nabla^2 f(x)\right)\left(x-\mathrm{Prox}_{\gamma g}\big(x-\gamma\nabla f(x)\right)\big).
\end{equation*}
Moreover, the set of optimal solutions to problem {\rm(\ref{cco})} coincides with the set of stationary points of the FBE \(\vp_{\gamma}\) for all parameter values \(\gamma\in(0,l^{-1})\).

{\bf(ii)} If \(f(x)=\frac{1}{2}\braket{A,x}+\braket{b,x}+\alpha\) with \(A\in \mathbb{R}^{n\times n}\) being a symmetric and
positive-semidefinite matrix, \(b \in \mathbb{R}^n\), and \(\alpha\in\mathbb{R}\), then for all  \(\gamma\in(0,\frac{1}{l})\), the FBE \(\vp_{\gamma}\) of \(\vp\) is convex and its gradient \(\nabla\vp_{\gamma}\) is Lipschitz continuous with modulus \(L:=2(1-\gamma\lambda_{\min(A)})/\gamma\).  If \(A\) is positive-definite, then \(\vp_{\gamma}\) is strongly convex with modulus \(K:=\min\{(1-\gamma\lambda_{\min(A)})\lambda_{\min(A)},(1-\gamma\lambda_{\max(A)})\lambda_{\max(A)}\}\).
\end{proposition}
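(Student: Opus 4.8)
The plan is to reduce the forward--backward envelope \eqref{fbe} to a closed form built from the Moreau envelope of $g$, after which every assertion follows from standard properties of Moreau envelopes and proximal mappings. First I would complete the square in the defining infimum, writing
\[
\tfrac{1}{2\gamma}\|y-x\|^2+\braket{\nabla f(x),y-x}=\tfrac{1}{2\gamma}\big\|y-(x-\gamma\nabla f(x))\big\|^2-\tfrac{\gamma}{2}\|\nabla f(x)\|^2,
\]
so that the infimum over $y$ in \eqref{fbe} becomes exactly the Moreau envelope \eqref{moreau} evaluated at $x-\gamma\nabla f(x)$. This yields the key identity
\[
\vp_\gamma(x)=f(x)-\tfrac{\gamma}{2}\|\nabla f(x)\|^2+e_\gamma g\big(x-\gamma\nabla f(x)\big).
\]
Since $g$ is proper, l.s.c., and convex, the classical theory (see \cite{rockafellar2009variational}) gives that $e_\gamma g$ is $C^1$-smooth with $\nabla e_\gamma g(z)=\gamma^{-1}\big(z-\mathrm{Prox}_{\gamma g}(z)\big)$, while $x\mapsto x-\gamma\nabla f(x)$ is $C^1$ because $f$ is $C^2$. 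Hence $\vp_\gamma$ is $C^1$-smooth.

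To extract the gradient formula I would apply the chain rule, using that the Jacobian $I-\gamma\nabla^2 f(x)$ is symmetric. Writing $h(x):=f(x)-\tfrac{\gamma}{2}\|\nabla f(x)\|^2$, one gets $\nabla h(x)=(I-\gamma\nabla^2 f(x))\nabla f(x)$ and, for the envelope term, $(I-\gamma\nabla^2 f(x))\gamma^{-1}\big(x-\gamma\nabla f(x)-\mathrm{Prox}_{\gamma g}(x-\gamma\nabla f(x))\big)$. Summing and using the cancellation $\nabla f(x)+\gamma^{-1}(x-\gamma\nabla f(x))=\gamma^{-1}x$ collapses the expression to the stated formula for $\nabla\vp_\gamma(x)$. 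For the optimality correspondence I would observe that for $\gamma\in(0,l^{-1})$ the eigenvalues of $\nabla^2 f(x)$ lie in $[0,l]$, so $I-\gamma\nabla^2 f(x)\succ0$ is invertible at \emph{every} $x$; therefore $\nabla\vp_\gamma(x)=0$ if and only if $x=\mathrm{Prox}_{\gamma g}(x-\gamma\nabla f(x))$, which is the standard forward--backward fixed-point form of $0\in\nabla f(x)+\partial g(x)$, i.e., optimality for the convex problem \eqref{cco}.

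For part (ii) the quadratic $f$ gives $\nabla f(x)=Ax+b$ and $\nabla^2 f\equiv A$, so the closed form splits as $\vp_\gamma=h+m$ with $h(x)=f(x)-\tfrac{\gamma}{2}\|Ax+b\|^2$ a quadratic of constant Hessian $A-\gamma A^2=A(I-\gamma A)$ and $m(x)=e_\gamma g\big((I-\gamma A)x-\gamma b\big)$. Since $A$ and $I-\gamma A$ are simultaneously diagonalizable and $I-\gamma A\succ0$ for $\gamma<l^{-1}=\lambda_{\max}(A)^{-1}$, the Hessian $A(I-\gamma A)$ is positive-semidefinite with eigenvalues $\lambda_i(1-\gamma\lambda_i)\ge0$, so $h$ is convex; $m$ is convex as the composition of the convex $e_\gamma g$ with an affine map, and hence $\vp_\gamma$ is convex. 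The Lipschitz bound follows from $\nabla\vp_\gamma(x)=\gamma^{-1}(I-\gamma A)\big(x-\mathrm{Prox}_{\gamma g}(T(x))\big)$ with $T(x)=(I-\gamma A)x-\gamma b$: using $\|I-\gamma A\|=1-\gamma\lambda_{\min}(A)$, the nonexpansiveness of $\mathrm{Prox}_{\gamma g}$, and the Lipschitz constant of $T$, one bounds $\|(x-x')-(\mathrm{Prox}_{\gamma g}(T(x))-\mathrm{Prox}_{\gamma g}(T(x')))\|\le 2\|x-x'\|$, which multiplied by $\gamma^{-1}(1-\gamma\lambda_{\min}(A))$ gives the modulus $L$. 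When $A\succ0$, the eigenvalues $\lambda_i(1-\gamma\lambda_i)$ of $\nabla^2 h$ are bounded below by the minimum of the concave parabola $t\mapsto t(1-\gamma t)$ over $[\lambda_{\min}(A),\lambda_{\max}(A)]$, which---being concave---is attained at an endpoint and equals $K$; since $m$ is convex, $\vp_\gamma-\tfrac{K}{2}\|\cdot\|^2$ is convex, yielding strong convexity with modulus $K$.

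The bulk of the work (the chain-rule bookkeeping and the norm estimates) is routine. The two points demanding care are, first, justifying that $I-\gamma\nabla^2 f(x)$ is invertible on the \emph{entire} domain---not merely near a solution---so that the zero set of $\nabla\vp_\gamma$ coincides exactly with the prox-fixed-point set; and second, in part (ii), recognizing that because $t\mapsto t(1-\gamma t)$ is concave, its minimum over the spectrum of $A$ is taken at an endpoint, which is precisely what produces the stated modulus $K$.
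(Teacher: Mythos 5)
Your proof is correct, but it cannot be compared against a proof in the paper for a simple reason: the paper does not prove this proposition at all. It is imported verbatim from the cited references \cite{patrinos2013proximal,stella2017forward}, with the one-line attribution ``The following properties of FBEs are taken from \ldots''. What you have written is essentially the standard derivation underlying those references, and it is sound at every step: the completed-square identity $\vp_\gamma(x)=f(x)-\tfrac{\gamma}{2}\|\nabla f(x)\|^2+e_\gamma g(x-\gamma\nabla f(x))$ reduces everything to classical Moreau-envelope calculus; the chain rule (using symmetry of $\nabla^2 f$) collapses to the stated gradient formula; the observation that $I-\gamma\nabla^2 f(x)\succeq(1-\gamma l)I\succ 0$ for \emph{every} $x$ when $\gamma\in(0,l^{-1})$ is exactly what makes the zero set of $\nabla\vp_\gamma$ coincide with the fixed points of the forward--backward map, hence with the minimizers of the convex function $\vp$; and in part (ii) your estimates reproduce the constants exactly --- the triangle inequality plus nonexpansiveness of $\Prox_{\gamma g}$ and $\|I-\gamma A\|=1-\gamma\lambda_{\min}(A)$ give the factor $2$ in $L$, while the endpoint argument for the concave parabola $t\mapsto t(1-\gamma t)$ over $[\lambda_{\min}(A),\lambda_{\max}(A)]$ is precisely what produces $K$. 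The trade-off between the two treatments is the obvious one: the paper's citation keeps Section~\ref{sec_convex composite} short and defers to the original sources, whereas your argument makes the result self-contained and, usefully, exposes where each modulus ($L$ and $K$) comes from, which is otherwise opaque to a reader who does not chase the references.
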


Consider further the unconstrained optimization problem
\begin{equation}\label{fbe1}
\mbox{minimize }\;\vp_{\gamma}(x)\;\mbox{ over all }\;x\in\mathbb{R}^n,
\end{equation}
whose stationary points are  optimal solutions to (\ref{cco}) when \(f\) is \(C^2\)-smooth with the Lipschitz continuous gradient. Since the gradient \(\nabla\vp_{\gamma}\) may not be locally Lipschitz continuous, (\ref{fbe1}) is not a \(C^{1,1}\) optimization problem in general. From now on, we assume that \(f\) is a {\em quadratic function} as in
Proposition~\ref{prop:fbe}(ii), and thus \(\nabla\vp_{\gamma}\) is Lipschitz continuous.\vspace*{0.03in}

In what follows, we focus on the convex composite minimization problem
\begin{equation}\label{cco_quadratic}
\mbox{minimize }\;\vp(x)=\frac{1}{2}\braket{Ax,x}+\braket{b,x}+g(x)\;\mbox{ over all }\; x\in\mathbb{R}^n,
\end{equation}
where \(A\in\mathbb{R}^{n\times n}\) is a positive-semidefinite symmetric matrix, \(b\in\mathbb{R}^n\), and where \(g:\mathbb{R}^n\to\Rb\) is a proper l.s.c.\ convex function. To implement GRNM-W for problem \eqref{cco_quadratic}, we need to use the generalized Hessian of $\vp_{\gamma}$, which is calculated in \cite{khanh2023globally}.

\begin{proposition}[\bf calculating of generalized Hessians of FBEs]\label{hes}
Let \(\vp=f+g\) be as in {\rm(\ref{cco_quadratic})}, and let \(\gamma>0\) be such that \(R:=I-\gamma A\succ 0\).
Then the generalized Hessian of \(\vp_{\gamma}\) is calculated by
\begin{equation*}
\begin{aligned}
&\bar{z}\in\partial^2\vp_{\gamma}(\bar{x})(w)\\
\iff
&R^{-1}\bar{z}-Aw\in\partial^2 g\Big(\mathrm{Prox}_{\gamma g}(\bar{u}),\frac{1}{\gamma}(\bar{u}-\mathrm{Prox}_{\gamma g}(\bar{u}))\Big)
(w-\gamma R^{-1}\bar{z})
\end{aligned}
\end{equation*}
for \(\bar{x}\in\mathbb{R}^n,w\in\mathbb{R}^n,\bar{u}:=\bar{x}-\gamma (A\bar{x}+b)\). This can be equivalently expressed as
\begin{equation}\label{2cal}
\partial^2\vp_\gamma(\bar x)(w)=\gamma^{-1}R\big(w+D^*(-\Prox_{\gamma g})(\bar{u})(Rw)\big).
\end{equation}
\end{proposition}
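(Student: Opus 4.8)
The plan is to specialize the FBE gradient formula of Proposition~\ref{prop:fbe}(i) to the quadratic datum of \eqref{cco_quadratic}, recognize $\nabla\vp_{\gamma}$ as a composition of invertible linear maps with the single-valued Lipschitzian proximal mapping $P:=\Prox_{\gamma g}$, and then run the coderivative calculus. Since $\nabla^2 f\equiv A$, the formula reads
\[
\nabla\vp_{\gamma}(x)=\gamma^{-1}R\big(x-P(\Psi(x))\big),\qquad \Psi(x):=Rx-\gamma b=x-\gamma\nabla f(x),
\]
where $R=I-\gamma A\succ 0$ is symmetric and invertible and $\bar u=\Psi(\bar x)$. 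I would first prove the compact form \eqref{2cal}, and then convert it into the displayed ``$\iff$'' characterization by passing from $P$ to $\partial g$.

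For \eqref{2cal}, write $\nabla\vp_{\gamma}=(\gamma^{-1}R)\circ G$ with $G:=\mathrm{id}-P\circ\Psi$. Because the outer factor $\gamma^{-1}R$ is a linear isomorphism and the identity summand is smooth, the coderivative chain and sum rules hold as \emph{equalities}, giving $D^{*}(\nabla\vp_{\gamma})(\bar x)(w)=D^{*}G(\bar x)(\gamma^{-1}Rw)$ and $D^{*}G(\bar x)(y)=y+R\,D^{*}(-P)(\bar u)(y)$, where the last step uses that $\Psi$ is an affine isomorphism with Jacobian $R=R^{*}$. Substituting $y=\gamma^{-1}Rw$ and invoking positive homogeneity of the coderivative, in the form $D^{*}(-P)(\bar u)(\gamma^{-1}Rw)=\gamma^{-1}D^{*}(-P)(\bar u)(Rw)$, yields exactly $\partial^{2}\vp_{\gamma}(\bar x)(w)=\gamma^{-1}R\big(w+D^{*}(-P)(\bar u)(Rw)\big)$, i.e.\ \eqref{2cal}.

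The remaining task is to express $D^{*}(-P)$ through $\partial^{2}g$. The key is the resolvent identity $P=(I+\gamma\partial g)^{-1}$, which shows that the graphs are related by the linear isomorphism $\Theta(z,v):=(z+\gamma v,z)$, namely $\gph P=\Theta(\gph\partial g)$. Setting $p:=P(\bar u)$ and $\bar v:=\gamma^{-1}(\bar u-p)\in\partial g(p)$, transforming normal cones under $\Theta$ by $(\Theta^{-1})^{*}$, and reading off the coderivative from \eqref{lim-cod}--\eqref{2nd}, I obtain the relation $x\in D^{*}P(\bar u)(y)\iff x-y\in\partial^{2}g(p,\bar v)(-\gamma x)$; combined with $D^{*}(-P)(\bar u)(\cdot)=D^{*}P(\bar u)(-\,\cdot)$ this feeds back into \eqref{2cal}. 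Finally, writing a generic $\bar z=\gamma^{-1}R(w+s)$ with $s\in D^{*}(-P)(\bar u)(Rw)$, using $R-I=-\gamma A$ to simplify $s+Rw=\gamma(R^{-1}\bar z-Aw)$, and clearing the factor $\gamma$ by positive homogeneity delivers the stated equivalence $R^{-1}\bar z-Aw\in\partial^{2}g(p,\bar v)(w-\gamma R^{-1}\bar z)$.

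I expect the main obstacle to be bookkeeping rather than depth: one must (i) guarantee that each calculus rule is applied in its \emph{exact} (equality) regime, which here is secured by the invertibility of $R$ and $\Theta$ together with the smoothness of the linear and identity factors, and (ii) track the several positive scalar factors $\gamma$ so that the two forms reconcile --- the coderivative's positive homogeneity is precisely what makes these $\gamma$'s cancel. A minor preliminary point worth recording is that, by Proposition~\ref{prop:fbe}(ii), the quadratic structure makes $\nabla\vp_{\gamma}$ globally Lipschitz, so $\partial^{2}\vp_{\gamma}=D^{*}\nabla\vp_{\gamma}$ is a legitimate generalized Hessian and $P$ is single-valued Lipschitzian, which justifies the single-valued coderivative notation used throughout.
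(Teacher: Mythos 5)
Your proof is correct, and it is worth knowing that the paper itself contains no proof of Proposition~\ref{hes}: the sentence immediately preceding it says the generalized Hessian of $\vp_\gamma$ ``is calculated in \cite{khanh2023globally}'', so the result is imported by citation. Your blind argument therefore supplies the derivation that the paper delegates to its reference, and every step you invoke is legitimate. The calculus rules you use are exact precisely for the reasons you give: since $\gamma^{-1}R$ and $\Psi(x)=Rx-\gamma b$ are isomorphisms, the graphs transform as $\gph(M\circ G)=(I\times M)(\gph G)$ and $\gph(F\circ\Psi)=(\Psi^{-1}\times I)(\gph F)$, limiting normal cones to images of sets under linear isomorphisms transform by adjoints, and the sum rule with the smooth identity summand is an equality; positive homogeneity of coderivatives (graphical normal cones are cones) justifies pulling the scalars $\gamma^{-1}$ and $\gamma$ in and out. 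The bridge to $\partial^2 g$ is also sound: Moreau's resolvent identity $\Prox_{\gamma g}=(I+\gamma\partial g)^{-1}$ gives $\gph\Prox_{\gamma g}=\Theta(\gph\partial g)$ with $\Theta(z,v)=(z+\gamma v,z)$, and since $\Theta^{*}(x,-y)=(x-y,\gamma x)$, one gets exactly your relation $x\in D^{*}\Prox_{\gamma g}(\bar u)(y)\iff x-y\in\partial^{2}g(p,\bar v)(-\gamma x)$. The final bookkeeping checks out as well: writing $s=\gamma R^{-1}\bar z-w\in D^{*}(-\Prox_{\gamma g})(\bar u)(Rw)$, the identity $I-R=\gamma A$ gives $s+Rw=\gamma(R^{-1}\bar z-Aw)$ and $-\gamma s=\gamma(w-\gamma R^{-1}\bar z)$, so clearing the common factor $\gamma$ by homogeneity produces the stated ``$\iff$'' characterization, in both directions, since \eqref{2cal} was established as a set equality. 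In short: a correct, self-contained proof by the natural coderivative-calculus route, filling a gap the paper closes only by citation.
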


We employ \eqref{2cal} in the construction of CNFB to solve \eqref{cco_quadratic}.

\begin{algorithm}
\caption{Coderivative-based Newton forward-backward method (CNFB)}\label{alg_CNFB}
{\bf Input:} \(x^0 \in \mathbb{R}^n\), $\gamma>0$ such that $R:=I-\gamma A\succ 0$, \(0 < \sigma_1 < \sigma_2 <1\), \(c>0\), \(\rho \in (0,1]\).
\begin{algorithmic}[1]
\For{\(k=0,1,\ldots\)}
\State If \(\nabla \varphi_\gamma (x^k)=0\), stop; otherwise set $u^k:=x^k-\gamma(Ax^k+b)$, $v^k:=\Prox_{\gamma g}(u^k)$, \(\mu_k = c\|\nabla \varphi_\gamma(x^k)\|^{\rho}=c\gamma^{-1}\|R(x^k-v^k)\|^\rho\) and go to next step.
\State Choose $B_k \succ 0$. Find \(d^k \in \mathbb{R}^n\) such that \(-\nabla \varphi_\gamma (x^k) \in \partial^2 \varphi_\gamma (x^k){(d^k)} + \mu_k B_k d^k \), i.e.,
$$
\gamma^{-1}R(v^k-x^k) \in \gamma^{-1}R\big(d^k+D^*(-\Prox_{\gamma g})(u^k)(Rd^k)\big) + \mu_k B_k d^k.
$$
\State Set \(\tau_k=1\) and check the Wolfe conditions:
\[\varphi_\gamma(x^k+\tau_k d^k) \le \varphi_\gamma(x^k)+\sigma_1 \tau_k \braket{\nabla\varphi_\gamma(x^k),d^k},\]
\[\braket{\nabla \varphi_\gamma(x^k+\tau_k d^k),d^k} \ge \sigma_2 \braket{\nabla \varphi_\gamma(x^k),d^k}.\]
If the latter conditions do not hold, adjust \(\tau_k\) (using any specific implementation of the Wolfe linesearch) until it satisfies the Wolfe conditions. We assume that in the implementation there exists an upper bound \(\tau_{\mathrm{max}}\) on the maximum stepsize allowed.
\State Set \(x^{k+1}=x^k+\tau_kd^k\).
\EndFor
\end{algorithmic}
\end{algorithm}

To proceed with establishing the convergence properties of Algorithm~\ref{alg_CNFB}, we need to recall the two results obtained in \cite{khanh2023globally}. The first one is taken from \cite[Proposition~5]{khanh2023globally}.

\begin{proposition}[\bf metric regularity of FBEs]\label{proposition: metric} Let \(\vp=f+g\) be as in {\rm(\ref{cco_quadratic})}, and let \(\gamma>0\) be such that \(R:=I-\gamma A\succ 0\). For any \(\bar{x}\in\mathbb{R}^n\) with \(0\in\partial\vp(\bar{x})\), the following assertions hold:

{\bf(i)} \(\partial\vp\) is metrically regular around \((\bar{x},0)\) if and only if \(\nabla\vp_\gamma\) is metrically regular around \(\bar{x}\).

{\bf(ii)} \(\bar{x}\) is a tilt-stable local minimizer of \(\vp\) if and only if \(\bar{x}\) is a tilt-stable local minimizer of \(\vp_\gamma\).

{\bf(iii)} \(\|\partial^2\vp_\gamma(\bar{x})^{-1}\|\le\|\partial^2\vp(\bar{x},0)^{-1}\|+\gamma\|R^{-1}\|\).
\end{proposition}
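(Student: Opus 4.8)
The plan is to reduce the entire proposition to one \emph{exact correspondence} between the graphs of the generalized Hessians $\partial^2\vp_\gamma(\bar x)$ and $\partial^2\vp(\bar x,0)$, after which assertions (i)--(iii) become short consequences. Concretely, I would aim to establish the equivalence
\[
\zeta\in\partial^2\vp_\gamma(\bar x)(w)\iff\zeta\in\partial^2\vp(\bar x,0)\big(w-\gamma R^{-1}\zeta\big),
\]
which already displays the $R^{-1}$ and the factor $\gamma$ that surface in assertion (iii). To prove it I would start from the explicit formula \eqref{2cal} of Proposition~\ref{hes}, apply the negation rule $D^*(-\Prox_{\gamma g})(\bar u)(y)=D^*\Prox_{\gamma g}(\bar u)(-y)$, and then convert $D^*\Prox_{\gamma g}$ into $\partial^2 g$ via the identity $\Prox_{\gamma g}=(I+\gamma\partial g)^{-1}$ and the coderivative inverse rule. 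The stationarity $0\in\partial\vp(\bar x)$ gives $\bar x=\Prox_{\gamma g}(\bar u)$ and the paired subgradient $\bar w:=\gamma^{-1}(\bar u-\bar x)=-(A\bar x+b)\in\partial g(\bar x)$; combining this with the exact sum rule $\partial^2\vp(\bar x,0)=A+\partial^2 g(\bar x,\bar w)$ (an equality because $f$ is $C^2$) and the algebraic identity $\gamma^{-1}R=\gamma^{-1}I-A$ collapses the $R$-conjugations into the displayed correspondence.

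Assertion (i) then follows from the Mordukhovich coderivative criterion for metric regularity \cite{mordukhovich2006variational,rockafellar2009variational}: since $\nabla\vp_\gamma$ is single-valued and locally Lipschitzian, it is metrically regular around $\bar x$ exactly when $0\in\partial^2\vp_\gamma(\bar x)(w)$ forces $w=0$, and $\partial\vp$ is metrically regular around $(\bar x,0)$ exactly when $0\in\partial^2\vp(\bar x,0)(w)$ forces $w=0$. Setting $\zeta=0$ in the correspondence shows that these two kernels \emph{coincide}, so the nonsingularity conditions hold or fail simultaneously. For assertion (ii) I would invoke the second-order characterization of tilt-stable minimizers \cite{drusvyatskiy2014second,mordukhovich2015second,chieu2017second}, by which tilt stability is equivalent to uniform positive-definiteness of the generalized Hessian, i.e.\ $\braket{\eta,s}\ge c\|s\|^2$ over the whole graph for some $c>0$. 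Writing $s=w-\gamma R^{-1}\zeta$ and expanding $\braket{\zeta,w}=\braket{\zeta,s}+\gamma\braket{\zeta,R^{-1}\zeta}$, the two nonnegative contributions (positivity on the $\vp$-side and $R^{-1}\succ0$) yield the analogous uniform positivity on the $\vp_\gamma$-side with a constant depending only on $\gamma$, $\lambda_{\min}(R)$, and $\lambda_{\max}(R)$; the reverse implication runs identically through the inverse correspondence.

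For assertion (iii), reading $\|\partial^2\vp(\bar x,0)^{-1}\|$ and $\|\partial^2\vp_\gamma(\bar x)^{-1}\|$ as the outer norms of the set-valued inverses (allowed to be $+\infty$), I would take any $\zeta$ with $\|\zeta\|\le1$ and any $w$ with $\zeta\in\partial^2\vp_\gamma(\bar x)(w)$; the correspondence gives $s:=w-\gamma R^{-1}\zeta\in\partial^2\vp(\bar x,0)^{-1}(\zeta)$, whence
\[
\|w\|\le\|s\|+\gamma\|R^{-1}\|\,\|\zeta\|\le\big\|\partial^2\vp(\bar x,0)^{-1}\big\|+\gamma\|R^{-1}\|,
\]
and taking the supremum over admissible $\zeta$ and $w$ gives the stated bound. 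The main obstacle, and the step deserving the most care, is the \emph{sign-correct} derivation of the correspondence itself: the negation rule for $D^*(-\Prox_{\gamma g})$, the inverse rule turning $D^*\Prox_{\gamma g}$ into $\partial^2 g$, and the equality (not merely inclusion) in the sum rule for $\partial^2\vp$ must all be handled precisely, while the set-valued inverse-norm manipulations in (iii) must be justified rather than treated formally. Once the displayed equivalence is secured, every remaining step is elementary linear algebra governed by the invertible matrix $R$.
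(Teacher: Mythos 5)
First, note that the paper contains no proof of this proposition: it is imported verbatim from \cite[Proposition~5]{khanh2023globally} (``The first one is taken from [Proposition 5]''), so there is no internal argument to compare against; your attempt must stand on its own. Much of it does. Your key graph correspondence
\(\zeta\in\partial^2\vp_\gamma(\bar x)(w)\iff\zeta\in\partial^2\vp(\bar x,0)\big(w-\gamma R^{-1}\zeta\big)\)
is correct: at a stationary point one has \(\Prox_{\gamma g}(\bar u)=\bar x\) and \(\gamma^{-1}(\bar u-\bar x)=-(A\bar x+b)\in\partial g(\bar x)\), the exact sum rule gives \(\partial^2\vp(\bar x,0)=A+\partial^2 g(\bar x,-(A\bar x+b))\), and the matrix identity \(\zeta-A(w-\gamma R^{-1}\zeta)=(I+\gamma AR^{-1})\zeta-Aw=R^{-1}\zeta-Aw\) collapses Proposition~\ref{hes} into your display. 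Equivalently, \(\big(\partial^2\vp_\gamma(\bar x)\big)^{-1}=\big(\partial^2\vp(\bar x,0)\big)^{-1}+\gamma R^{-1}\) as set-valued maps. From this, your proofs of (i) (coincidence of coderivative kernels plus the Mordukhovich criterion) and of (iii) (triangle inequality for the outer norms) are sound.

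The genuine gap is in the reverse implication of (ii), which you dismiss with ``runs identically through the inverse correspondence.'' It does not: going from \(\vp_\gamma\) back to \(\vp\) you must write \(\langle\zeta,s\rangle=\langle\zeta,w\rangle-\gamma\langle\zeta,R^{-1}\zeta\rangle\), so the \(R^{-1}\)-term is now \emph{subtracted}, and uniform positivity on the \(\vp_\gamma\)-side no longer combines with anything nonnegative. Indeed, as a statement about abstract graphs the transfer is false: with \(\gamma=1\), \(R=I\), the single graph point \((s,\zeta)=(e_2,e_1)\in\R^2\times\R^2\) satisfies \(\langle\zeta,s+\gamma R^{-1}\zeta\rangle=1\ge\tfrac12\|s+\gamma R^{-1}\zeta\|^2\) yet \(\langle\zeta,s\rangle=0\), so uniform positivity of the shifted graph does not force uniform positivity of the original one. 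The implication is true only because these graphs are generalized Hessians of \emph{convex} functions, and your argument never uses that structure. The clean repair is to make (ii) a corollary of (i): since \(\vp\) is convex and \(\vp_\gamma\) is convex with Lipschitz gradient (Proposition~\ref{prop:fbe}(ii)), tilt stability of a stationary point is equivalent to metric regularity of the subgradient/gradient mapping for each of them (this is the convex-case equivalence from \cite{drusvyatskiy2014second}, which the paper itself invokes in the proof of Theorem~\ref{thm: convergence of GRNM for cco}(ii)); chaining these two equivalences through (i) yields (ii) in both directions. Your forward direction via the sum \(\langle\zeta,w\rangle=\langle\zeta,s\rangle+\gamma\langle\zeta,R^{-1}\zeta\rangle\) is fine, but it is not needed once (ii) is routed through (i).
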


In the next proposition taken from \cite[Propositions~6]{khanh2023globally}, we use the notion of {\em twice epi-differentiability} of extended-real-valued functions, which is studied  and applied to optimization in \cite{rockafellar2009variational} and  more recent papers; see, e.g., \cite{mms1,mms,ms}, where the reader can find more details and references.

\begin{proposition}[\bf semismoothness$^*$ and directional differentiability for FBEs]\label{proposition: semismooth} In the setting of Proposition~{\rm\ref{proposition: metric}}, the following assertions hold:

{\bf(i)} \(\nabla\vp_{\gamma}\) is semismooth\({}^*\) at \(\bar{x}\) if \(\partial g\) is semismooth\({}^*\) at \((\bar{x},\bar{v})\), where \(\bar{v}:=-A\bar{x}-b\);

{\bf(ii)} \(\nabla\vp_{\gamma}\) is directionally differentiable at \(\bar{x}\) if \(g\) is twice epi-differentiable at \(\bar{x}\) for \(\bar{v}\).
\end{proposition}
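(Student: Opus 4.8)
The plan is to reduce both assertions to the corresponding regularity of the proximal mapping $\Prox_{\gamma g}$ and then transport that regularity through the explicit gradient formula of Proposition~\ref{prop:fbe}(i). Since $f$ in \eqref{cco_quadratic} is quadratic with $\nabla f(x)=Ax+b$ and $\nabla^2 f(x)=A$, the gradient of the FBE becomes
\begin{equation*}
\nabla\vp_\gamma(x)=\gamma^{-1}R\big(x-\Prox_{\gamma g}(Rx-\gamma b)\big)=\gamma^{-1}Rx-\gamma^{-1}R\,\Prox_{\gamma g}(u(x)),
\end{equation*}
where $R=I-\gamma A\succ0$ and the affine map $u(x):=Rx-\gamma b$ has invertible linear part $R$. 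Under the standing assumption $0\in\partial\vp(\bar x)$ we have $\bar v:=-A\bar x-b\in\partial g(\bar x)$, so that $u(\bar x)=\bar x+\gamma\bar v=\bar u$ and $\Prox_{\gamma g}(\bar u)=\bar x$, i.e.\ $(\bar u,\bar x)\in\gph\Prox_{\gamma g}$. Because $x\mapsto\gamma^{-1}Rx$ is linear and $u$ is affine, both semismoothness$^*$ and directional differentiability of $\nabla\vp_\gamma$ at $\bar x$ will follow once the matching property of $\Prox_{\gamma g}$ at $\bar u$ is in hand, since each property is stable under pre-composition with the affine $u$ (invertible $R$), under post-multiplication by $\gamma^{-1}R$, and under adding the linear term.

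For assertion (i), the task is to transfer semismoothness$^*$ from the set-valued $\partial g$ to its resolvent $\Prox_{\gamma g}=(I+\gamma\partial g)^{-1}$. I would use that $\gph\Prox_{\gamma g}$ is the image of $\gph\partial g$ under the linear isomorphism $\Phi\colon(y,w)\mapsto(y+\gamma w,y)$ of $\R^n\times\R^n$, which sends $(\bar x,\bar v)$ to $(\bar u,\bar x)$. Writing the semismooth$^*$ property at a graph point as the geometric condition that every directional limiting normal be orthogonal to its generating direction, one sees that this condition is invariant under \emph{any} linear isomorphism of the product space: such a map sends directions by $\Phi$ and directional normals by $\Phi^{-T}$, and $\langle\Phi^{-T}n,\Phi\delta\rangle=\langle n,\delta\rangle$ preserves the relevant pairing. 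Hence semismoothness$^*$ of $\partial g$ at $(\bar x,\bar v)$ yields semismoothness$^*$ of $\Prox_{\gamma g}$ at $(\bar u,\bar x)$ (these preservation rules are also collected in \cite{gfrerer2021semismooth} and \cite[Section~9.1.2]{m24}). As $\Prox_{\gamma g}$ is single-valued and Lipschitzian, the single-valued characterization recorded after the definition of semismoothness$^*$ shows it satisfies \eqref{SAP} at $\bar u$; pre-composing with $u$ via the Clarke chain rule $\partial_C(\Prox_{\gamma g}\circ u)(\bar x)=\partial_C\Prox_{\gamma g}(\bar u)\,R$ and forming the affine combination preserve \eqref{SAP}, whence $\nabla\vp_\gamma$ is semismooth$^*$ at $\bar x$.

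For assertion (ii), I would run the analogous transfer with proto-differentiability in place of semismoothness$^*$. By the Poliquin--Rockafellar second-order theory (\cite[Theorem~13.40]{rockafellar2009variational}), twice epi-differentiability of the convex function $g$ at $\bar x$ for $\bar v$ is equivalent to proto-differentiability of $\partial g$ at $\bar x$ for $\bar v$. Proto-differentiability, being the existence of a graphical (tangent-cone) limit, is again invariant under the linear isomorphism $\Phi$ above, so $\Prox_{\gamma g}=(I+\gamma\partial g)^{-1}$ is proto-differentiable at $\bar u$ for $\bar x$. Since $\Prox_{\gamma g}$ is single-valued and Lipschitzian, its proto-differentiability coincides with semidifferentiability; in particular the one-sided directional derivative $\Prox_{\gamma g}'(\bar u;\cdot)$ exists in every direction. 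Directional differentiability is then stable under composition with the affine $u$ and under the affine combination, so $\nabla\vp_\gamma$ is directionally differentiable at $\bar x$.

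The affine bookkeeping around $u$ and $\gamma^{-1}R$ is routine; the genuine content is the passage through the resolvent $(I+\gamma\partial g)^{-1}$. For (i) this is made clean by the invariance of the semismooth$^*$ geometric condition under linear isomorphisms of the graph space, so the main care is in the single-valued reduction to \eqref{SAP}. For (ii) the crux — and the place I expect the real work — is the Poliquin--Rockafellar equivalence between twice epi-differentiability of $g$ and proto-differentiability of $\partial g$, together with the fact that for the Lipschitzian single-valued resolvent proto-differentiability upgrades to genuine semidifferentiability rather than a merely graphical outer limit; these are the second-order variational-analysis inputs that must be invoked carefully.
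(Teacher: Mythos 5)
Your proof is correct. Note that the paper itself contains no argument for this proposition --- it is imported verbatim from \cite[Proposition~6]{khanh2023globally} --- so the only comparison available is with that cited proof, and your route is in substance the same resolvent-transfer argument: the graph identity $\gph\Prox_{\gamma g}=\Phi(\gph\partial g)$ for the linear isomorphism $\Phi(y,w)=(y+\gamma w,y)$, the invariance of the semismooth$^*$ condition and of proto-differentiability under such isomorphisms of the graph space, the single-valued Lipschitz reduction of semismoothness$^*$ to condition \eqref{SAP}, the Poliquin--Rockafellar equivalence \cite[Theorem~13.40]{rockafellar2009variational} between twice epi-differentiability of $g$ and proto-differentiability of $\partial g$, and the upgrade of proto-differentiability to semidifferentiability for the nonexpansive single-valued $\Prox_{\gamma g}$. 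All the individual steps you invoke check out, including the affine bookkeeping through $u(x)=Rx-\gamma b$ and the exact Clarke chain rule for pre-composition with an affine map whose linear part $R$ is invertible.
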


Now we are ready to establish comprehensive convergence results for CNFB.

\begin{theorem}[\bf performance of CNFB]\label{thm: convergence of GRNM for cco} Consider the convex composite minimization problem \eqref{cco_quadratic}, where the symmetric matrix \(A\) is
positive-semidefinite. Then we have the assertions:

{\bf(i)} CNFB $($Algorithm~{\rm\ref{alg_CNFB})} either stops after finitely many iterations at a minimizer of \(\vp\), or generates a sequence \(\{x^k\}\) whose accumulation points are optimal solutions to problem {\rm(\ref{cco_quadratic})}.

{\bf (ii)} If the subgradient mapping \(\partial\vp\) is metrically regular around \((\bar{x},0)\) with modulus \(\kappa>0\) $($which holds when \(A\) is positive-definite$)$, where \(\bar{x}\) is an accumulation point of \(\{x^k\}\),
then the sequence \(\{x^k\}\) converges with local R-linear rate to \(\bar{x}\), and \(\bar{x}\) is a tilt-stable local minimizer of \(\vp\).

{\bf(iii)} The local convergence rate of \(\{x^k\}\) is Q-superlinear if the subgradient mapping \(\partial g\) is semismooth\({}^*\) at \((\bar{x},\bar{v})\), where \(\bar{v}:=-A\bar{x}-b\), and if either one of the following conditions holds:

{\bf(a)} \(g\) is twice epi-differentiable at \(\bar{x}\) for \(\bar{v}\).

{\bf(b)} The linesearch constants satisfy the conditions \(\sigma_1\in(0,\frac{1}{2LK})\) and \(\sigma_2\in(1-\frac{K}{L},1)\), where \(L:=2(1-\gamma\lambda_{\min(A)})/\gamma\) and \(K:=\kappa+\gamma\|B^{-1}\|\).
\end{theorem}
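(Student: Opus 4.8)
The plan is to recognize that CNFB (Algorithm~\ref{alg_CNFB}) is nothing but GRNM-W (Algorithm~\ref{alg_regularizedNewton_Wolfe}) run on the forward-backward envelope $\vp_\gamma$, and then to transfer every conclusion of Theorems~\ref{thm:wellposedness}--\ref{thm:convergence_wolfe} back to the composite problem~\eqref{cco_quadratic} through the equivalences in Propositions~\ref{proposition: metric} and~\ref{proposition: semismooth}. The preparatory step is to verify that $\vp_\gamma$ meets the standing hypotheses of the GRNM-W theory. Since $f$ is quadratic with $A\succeq 0$ and $R=I-\gamma A\succ 0$ (which forces $\gamma\in(0,l^{-1})$), Proposition~\ref{prop:fbe}(ii) shows that $\vp_\gamma$ is convex and that $\nabla\vp_\gamma$ is globally Lipschitz with the explicit modulus $L=2(1-\gamma\lambda_{\min}(A))/\gamma$; hence $\vp_\gamma$ is of class $C^{1,1}$. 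Convexity of $\vp_\gamma$ forces its generalized Hessian $\partial^2\vp_\gamma(x)$ to be positive-semidefinite at every $x$ (the footnote to Theorem~\ref{thm:wellposedness}), and, upon substituting the explicit formula~\eqref{2cal} from Proposition~\ref{hes}, the inclusion in line~3 of CNFB is exactly the Newton-Wolfe inclusion of GRNM-W applied to $\vp_\gamma$. Thus Algorithm~\ref{alg_regularizedNewton_Wolfe} and its entire convergence analysis apply verbatim with $\vp$ replaced by $\vp_\gamma$.

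With this reduction in hand, assertion~(i) follows by applying Theorem~\ref{thm:wellposedness} (well-posedness and monotone decrease) and Theorem~\ref{thm:weakconvergence_wolfe} (stationarity of accumulation points, under the same uniform spectral bounds on $B_k$) to $\vp_\gamma$, which is bounded below since $\inf\vp_\gamma=\inf\vp$ for $\gamma\in(0,l^{-1})$. Every accumulation point $\bar x$ of $\{x^k\}$ is therefore stationary for $\vp_\gamma$, and likewise a finite stop at $\nabla\vp_\gamma(x^k)=0$ lands at such a stationary point. By Proposition~\ref{prop:fbe}(i) the stationary points of $\vp_\gamma$ coincide with the optimal solutions of~\eqref{cco_quadratic}, which yields~(i).

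For assertion~(ii), metric regularity of $\partial\vp$ around $(\bar x,0)$ is equivalent by Proposition~\ref{proposition: metric}(i) to metric regularity of $\nabla\vp_\gamma$ around $\bar x$, so Theorem~\ref{thm:convergence_wolfe}(ii) delivers the R-linear convergence of $\{x^k\}$ to $\bar x$; the tilt-stability argument of Theorem~\ref{thm:convergence_wolfe} (its Claim~1) makes $\bar x$ a tilt-stable local minimizer of $\vp_\gamma$, and Proposition~\ref{proposition: metric}(ii) transfers tilt stability to $\vp$ itself. For assertion~(iii), semismoothness$^*$ of $\partial g$ at $(\bar x,\bar v)$ yields semismoothness$^*$ of $\nabla\vp_\gamma$ at $\bar x$ by Proposition~\ref{proposition: semismooth}(i); under~(a), twice epi-differentiability of $g$ additionally gives directional differentiability of $\nabla\vp_\gamma$ by Proposition~\ref{proposition: semismooth}(ii), so group~(a) of Theorem~\ref{thm:convergence_wolfe}(iii) is met and Q-superlinear convergence follows. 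Under~(b), I would invoke group~(b) of Theorem~\ref{thm:convergence_wolfe}(iii) with the Lipschitz modulus $L$ of $\nabla\vp_\gamma$ and a metric-regularity modulus of $\nabla\vp_\gamma$ controlled by the constant $K$.

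The main obstacle I anticipate is precisely the moduli bookkeeping in case~(iii)(b): one must express the admissible ranges of $\sigma_1,\sigma_2$ intrinsically in terms of the data of~\eqref{cco_quadratic} rather than the exact (unknown) moduli of $\nabla\vp_\gamma$. This requires combining the exact Lipschitz constant $L$ from Proposition~\ref{prop:fbe}(ii) with the bound $\|\partial^2\vp_\gamma(\bar x)^{-1}\|\le\|\partial^2\vp(\bar x,0)^{-1}\|+\gamma\|R^{-1}\|$ from Proposition~\ref{proposition: metric}(iii), whose right-hand side is dominated by $K$, and then checking that the stated intervals for $\sigma_1,\sigma_2$ (calibrated to the upper bound $K$) are contained in the intervals required by Theorem~\ref{thm:convergence_wolfe}(iii)(b) for the true metric-regularity modulus. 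Since a larger modulus only enlarges the admissible range, using $K$ as an overestimate keeps the conditions sufficient; everything else is a direct transcription of the GRNM-W results through the FBE equivalences.
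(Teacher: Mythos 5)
Your overall strategy --- viewing CNFB as GRNM-W run on the envelope \(\vp_\gamma\) and transferring every conclusion through Propositions~\ref{prop:fbe}, \ref{hes}, \ref{proposition: metric}, and \ref{proposition: semismooth} --- is exactly the paper's route, and your treatment of (i), the main part of (ii), and (iii)(a) matches the paper's proof; your transfer of tilt stability via Proposition~\ref{proposition: metric}(ii), instead of the paper's direct appeal to metric regularity of \(\partial\vp\) plus convexity of \(\vp\), is an immaterial variation. However, two genuine gaps remain. The first is that you never address the parenthetical claim in (ii) that metric regularity of \(\partial\vp\) around \((\bar x,0)\) holds automatically when \(A\) is positive-definite. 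This is part of the statement and needs an argument: the paper derives it by noting that \(A\succ 0\) makes \(\vp_\gamma\) strongly convex (Proposition~\ref{prop:fbe}(ii)), hence \(\partial^2\vp_\gamma\) is positive-definite everywhere, which yields metric regularity of \(\nabla\vp_\gamma\) and then of \(\partial\vp\) by Proposition~\ref{proposition: metric}(i).

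The second gap is in your resolution of the moduli bookkeeping in (iii)(b), which you yourself flag as the main obstacle. The principle ``a larger modulus only enlarges the admissible range'' is backwards for \(\sigma_1\): enlarging \(\kappa\) \emph{shrinks} the interval \((0,\tfrac{1}{2l\kappa})\) (which is precisely why the \(K\)-calibrated condition on \(\sigma_1\) is sufficient, i.e., by the opposite of your stated reasoning), while for \(\sigma_2\) the interval \((1-\kappa/l,1)\) in Theorem~\ref{thm:convergence_wolfe}(iii,b) does enlarge as \(\kappa\) grows, so the containment you need --- the \(K\)-calibrated interval inside the interval for the true modulus of \(\nabla\vp_\gamma\) --- fails for \(\sigma_2\), and your argument stalls. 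The fix, which is the paper's actual argument, is simpler and avoids any interval comparison: since every number dominating a modulus of metric regularity is itself a modulus, it suffices to show that \(K\) is a modulus of metric regularity of \(\nabla\vp_\gamma\) around \(\bar x\), whereupon Theorem~\ref{thm:convergence_wolfe}(iii,b) applies verbatim with \(\kappa=K\) and \(l=L\). Establishing this requires one tool absent from your proposal: the Mordukhovich criterion \cite[Theorem~9.40]{rockafellar2009variational}, which converts the coderivative bound \(\|\partial^2\vp_\gamma(\bar x)^{-1}\|\le\|\partial^2\vp(\bar x,0)^{-1}\|+\gamma\|R^{-1}\|\le K\) of Proposition~\ref{proposition: metric}(iii) into the statement that \(\nabla\vp_\gamma\) is metrically regular around \(\bar x\) with modulus \(K\); without that bridge, Proposition~\ref{proposition: metric}(iii) by itself says nothing about metric regularity moduli.
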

\begin{proof}
By Proposition~\ref{prop:fbe}(i), minimizing \(\vp\) reduces to minimizing the FBE function \(\vp_\gamma\) of class $C^{1,1}$ when the parameter $\gamma>0$ is sufficiently small. We now verify each claim of the theorem.

(i) Proposition~\ref{prop:fbe}(ii) tells us that the FBE $\vp_\gamma$ is convex and its gradient \(\nabla\vp_\gamma\) is Lipschitz continuous with modulus \(L:=2(1-\gamma\lambda_{\min(A)})/\gamma\). By \cite[Theorem~3.2]{chieu2011characterizing}, the generalized Hessian \(\partial^2\vp_\gamma(x)\) is positive-semidefinite for all \(x\in\mathbb{R}^n\). Then GRNM-W is well-defined, and the claimed assertion (i) follows from
Theorems~\ref{thm:wellposedness} and \ref{thm:weakconvergence_wolfe}.

(ii) By \cite[Proposition 4.5]{drusvyatskiy2014second}, the tilt-stability of \(\vp\) at \(\bar{x}\) with modulus \(\kappa\) follows from the metric regularity of \(\partial \vp\) and the convexity of \(\vp\). By Proposition~\ref{proposition: metric} (i), the gradient mapping $\nabla\vp_\gamma$ is metrically regular around $\bar x$. Then the
R-linear convergence of \(\{x^k\}\) follows from
Theorem~\ref{thm:convergence_wolfe}(ii). We also need to show that if $A$ is positive-definite, then $\partial\vp$ is metrically regular around $(\bar x,0)$. Indeed, the positive-definiteness of \(A\) ensures
by Proposition~\ref{prop:fbe}(ii) that \(\vp_\gamma\) is strongly convex with the Lipschitz continuous gradient. Using \cite[Theorem~5.1]{chieu2011characterizing} tells us that \(\partial^2\vp_\gamma(x)\) is positive-definite for all \(x\in\mathbb{R}^n\). This implies by \cite[Proposition~4.5 and Theorem~4.6]{drusvyatskiy2014second} that \(\nabla\vp_\gamma\) is metrically regular around \(\bar{x}\). Therefore, we deduce from Proposition~\ref{proposition: metric}(i) that $\partial\vp$ is metrically regular around $(\bar x,0)$.

(iii) Proposition~\ref{proposition: semismooth}(i) verifies that \(\nabla\vp_\gamma\) is semismooth\({}^*\) at \(\bar{x}\). In case (a), \(\nabla\vp_\gamma\) is directionally differentiable at \(\bar{x}\) by Proposition~\ref{proposition: semismooth}(ii). Then
Theorem~\ref{thm:convergence_wolfe}(iii,a) yields assertion (iii) in this case.

To complete the proof of the theorem, it remains to consider case (b) in (iii). It follows from Proposition~\ref{prop:fbe}(ii) that the number \(L\) therein is a Lipschitz constant of \(\nabla\vp_\gamma\) around \(\bar{x}\). Then Proposition~\ref{proposition: metric}(i) and the Mordukhovich criterion from \cite[Theorem~9.40]{rockafellar2009variational} (see \cite{m93}) ensure that the FBE $\vp_\gamma$ is metrically regular around \(\bar{x}\) with the modulus \(K\) defined above, and thus the claimed assertion (iii) holds by Theorem~\ref{thm:convergence_wolfe}(iii,b).
\end{proof}\vspace*{-0.23in}

\section{Coderivative-Based Newton Augmented Lagrangian Method}\label{sec_CNAL}\vspace*{-0.05in}

In this section, we develop another algorithm for a class of convex composite minimization problems, first-order nonsmooth and constrained, by embedding GRNM-W into the augmented Lagrangian method. Inspired by the semismooth Newton augmented Lagrangian method (SSNAL) \cite{li2018highly}, an augmented Lagrangian method employing the semismooth Newton method as inner problem solver, we propose here the new {\em coderivative-based Newton augmented Lagrangian method} (abbr.\ CNAL).

Consider the following {\em linear-convex composite minimization problem} written in the unconstrained extended-real-valued format:
\begin{equation}\label{problem:primal}
({\bf P})\;\mbox{ minimize }\;f(x):=h(\mathcal A x)-\braket{c,x}+p(x)\;\mbox{ over all }\;x\in\mathbb{R}^n,
\end{equation}
where $\mathcal A:\mathbb{R}^n\to\mathbb{R}^m$ is an $m\times n$ matrix,  $h:\mathbb{R}^m\to\R$ is a l.s.c.\ convex function, $p:\mathbb{R}^n
\to{\Rb}$ is a proper l.s.c.\ convex function, and $c\in\mathbb{R}^n$. The {\em dual problem} of \eqref{problem:primal} is given by
\begin{equation}\label{problem:dual}
\begin{aligned}
({\bf D})\;\mbox{ minimize }\; h^*(y)+p^*(z)\\
\mbox{ subject to }\quad\,\, \mathcal A^* y + z = c,
\end{aligned}
\end{equation}
where $h^*$ and $p^*$ are the Fenchel conjugates  of $h$ and $p$, respectively, and where $\mathcal A^*={\cal A}^{T}$ for the adjoint/transpose matrix of $\mathcal{A}$; see, e.g., \cite[Example~11.41]{rockafellar2009variational}.

Given $\sigma>0$, the {\em augmented Lagrangian} associated with \eqref{problem:dual} is
\begin{equation*}
 \mathcal L_{\sigma}(y,z;x):=h^*(y)+p^*(z)-\braket{x,\mathcal A^*y+z-c}+\frac{\sigma}{2}\|\mathcal A^*y+z-c\|^2
\end{equation*}
whenever $(y,z,x)\in\mathbb{R}^m\times\mathbb{R}^n\times\mathbb{R}^n$. From now on, we impose the additional assumption as follows.

\begin{assumption}\label{assumption:CNAL}
The function $h$ in \eqref{problem:primal} is strongly convex and
$C^2$-smooth.
\end{assumption}

We propose the following coderivative-based Newton augmented Lagrangian method (CNAL) to solve the dual problem \eqref{problem:dual} and thus the primal problem \eqref{problem:primal} by strong duality.

\begin{algorithm}
\caption{Coderivative-based Newton augmented Lagrangian method (CNAL)}\label{alg_CNAL}
{\bf Input:} $\sigma_0 >0$, $(y^0,z^0,x^0)\in\mathbb{R}^m\times\textup{dom}\, p^*\times\mathbb{R}^n$.
\begin{algorithmic}[1]
\For{\(k=0,1,\ldots\)}
\State Compute
\begin{equation}\label{p2:alm-sub}
(y^{k+1},z^{k+1}) \approx \arg\min\big\{\Psi_k (y,z):= \mathcal L_{\sigma_k}(y,z;x^k)\big\}
\end{equation}
via the coderivative-based Newton method GRNM-W. The stopping criterion is:
$\Psi_k(y^{k+1},z^{k+1})-\inf\Psi_k\le\epsilon^2_k/2\sigma_k$ with $\sum_{k=0}^\infty\epsilon_k<\infty$.
\State Compute $x^{k+1} = x^k - \sigma_k(\mathcal A^*y^{k+1} + z^{k+1} - c)$ and update $\sigma_{k+1} \uparrow \sigma_\infty\leq \infty$ .
\EndFor
\end{algorithmic}
\end{algorithm}

The next theorem establishes convergence properties of iterates in
Algorithm~\ref{alg_CNAL}.

\begin{theorem}[\bf convergence of CNAL]\label{thm:CNAL} Suppose that the primal problem \eqref{problem:primal} admits an optimal solution and that all the assumptions imposed above are satisfied. Then for any infinite sequence of iterates $\{(y^k,z^k,x^k)\}$ generated by Algorithm~{\rm\ref{alg_CNAL}}, we have that $\{x^k\}$ converges to an optimal solution of problem \eqref{problem:primal} while $\{(y^k,z^k)\}$ converges to an optimal solution of the dual problem \eqref{problem:dual}.
\end{theorem}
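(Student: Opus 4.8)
The plan is to recognize CNAL's outer loop as the (inexact) \emph{method of multipliers} applied to the convex program $(\mathbf{D})$ and then to invoke Rockafellar's equivalence between the method of multipliers and the proximal point algorithm (PPA) from \cite{rockafellar1976augmented}. Let $T_\ell$ denote the maximal monotone operator associated with the KKT system of the primal-dual pair $(\mathbf{P})$--$(\mathbf{D})$ (equivalently, the subdifferential of the Lagrangian of $(\mathbf{D})$ in the multiplier variable), whose zeros are exactly the optimal solutions $\bar x$ of $(\mathbf{P})$. Rockafellar's fundamental identity shows that the update $x^{k+1}=x^k-\sigma_k(\mathcal A^* y^{k+1}+z^{k+1}-c)$ coincides with the proximal iteration $x^{k+1}\approx(I+\sigma_k T_\ell)^{-1}(x^k)$ driven by the inner minimization in \eqref{p2:alm-sub}. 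First I would check that this identification is legitimate: since $(\mathbf{P})$ admits an optimal solution and $h,p$ are convex, convex duality yields zero duality gap and a nonempty solution set of $(\mathbf{D})$, so $T_\ell$ has a nonempty zero set.

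The second step is to translate the stopping criterion in line~2 of Algorithm~\ref{alg_CNAL}. I would show that the inexactness condition $\Psi_k(y^{k+1},z^{k+1})-\inf\Psi_k\le\epsilon_k^2/2\sigma_k$ with $\sum_k\epsilon_k<\infty$ implies Rockafellar's criterion (A), namely $\|x^{k+1}-(I+\sigma_k T_\ell)^{-1}(x^k)\|\le\epsilon_k$ with summable error. This follows from the standard estimate relating the suboptimality of the augmented Lagrangian subproblem to the squared distance between the approximate and exact proximal points, where the factor $2\sigma_k$ is precisely what converts a function-value gap into a distance bound. For this I must also confirm the subproblem is solvable: Assumption~\ref{assumption:CNAL} makes $h$ strongly convex and $C^2$, so $h^*$ is $C^{1,1}$; after partial minimization of $\Psi_k$ in $z$ (a Moreau-envelope operation on $p^*$) the subproblem reduces to minimizing a convex $C^{1,1}$ function of $y$ with positive-semidefinite generalized Hessian, to which GRNM-W applies and, by the results of Section~\ref{sec_reg_Wolfe}, drives the gap below the prescribed tolerance.

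With these two ingredients, the convergence of $\{x^k\}$ to a zero $\bar x$ of $T_\ell$, i.e.\ to an optimal solution of $(\mathbf{P})$, follows directly from the inexact PPA convergence theorem in \cite{rockafellar1976augmented}. It remains to prove convergence of the inner iterates $\{(y^k,z^k)\}$ to a dual optimal solution, and this is the step I expect to be the main obstacle, since the PPA machinery controls only the multiplier sequence $\{x^k\}$. Here I would use the multiplier update to write $\mathcal A^* y^{k+1}+z^{k+1}-c=\sigma_k^{-1}(x^k-x^{k+1})\to 0$, so every accumulation point of $\{(y^k,z^k)\}$ is dual-feasible; combining this with the vanishing subproblem gap $\Psi_k(y^{k+1},z^{k+1})-\inf\Psi_k\to 0$ and lower semicontinuity shows that every such accumulation point is dual-optimal. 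Finally, \emph{strong convexity of $h$} (hence strict convexity of $h^*$ in $y$) forces the $y$-component of the dual solution to be unique and, together with boundedness of $\{(y^k,z^k)\}$ afforded by Assumption~\ref{assumption:CNAL}, upgrades subsequential convergence to full convergence of $\{(y^k,z^k)\}$ to the dual optimal solution, in line with the SSNAL analysis of \cite{li2018efficiently,li2018highly}.
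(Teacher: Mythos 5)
Your strategy is the same as the paper's: its proof also consists of (i) conjugate-duality facts about $h^*$ drawn from Assumption~\ref{assumption:CNAL} via \cite[Proposition~12.60]{rockafellar2009variational}, (ii) strong duality between \eqref{problem:primal} and \eqref{problem:dual} via Fenchel's duality theorem \cite[Corollary~31.2.1]{rockafellar1970convex}, and (iii) an appeal to \cite[Theorem~4]{rockafellar1976augmented} for the convergence of the ALM/PPA iterates. The difference is granularity: the paper delegates the criterion-(A) translation, the convergence of the multiplier sequence $\{x^k\}$, \emph{and} the convergence of the subproblem solutions $\{(y^k,z^k)\}$ entirely to that one citation, whereas you reconstruct the PPA identification and stopping-rule translation explicitly and, most usefully, supply a separate argument for the dual iterates (asymptotic feasibility from the multiplier update, vanishing gap plus lower semicontinuity for optimality of accumulation points, then uniqueness and boundedness to get full convergence). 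That supplementary block is genuinely worthwhile, since Rockafellar's criterion-(A) result by itself controls the multiplier sequence and gives only asymptotic optimality of the subproblem solutions; your argument is essentially the one carried out in the SSNAL analysis \cite{li2018highly}, and it makes the proof self-contained precisely where the paper is terse.

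One implication in your final paragraph is stated backwards and would fail as written: strong convexity of $h$ does \emph{not} imply strict convexity of $h^*$. Conjugacy pairs strong convexity of $h$ with Lipschitz continuity of $\nabla h^*$ \cite[Proposition~12.60]{rockafellar2009variational}, and it pairs \emph{differentiability} (essential smoothness) of $h$ with essential \emph{strict} convexity of $h^*$ \cite[Theorem~26.3]{rockafellar1970convex}; for instance, $h(y)=|y|+y^2/2$ is strongly convex while $h^*$ vanishes identically on $[-1,1]$, so it is not strictly convex. Your conclusion survives because Assumption~\ref{assumption:CNAL} also makes $h$ $C^2$-smooth, hence $h^*$ is (essentially) strictly convex, the $y$-component of any dual solution is unique, and $z=c-\mathcal A^*y$ is then determined by feasibility; but the justification must invoke the smoothness of $h$, not its strong convexity. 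Relatedly, the boundedness of $\{(y^k,z^k)\}$ should not be attributed to Assumption~\ref{assumption:CNAL} alone: it follows from the finiteness of $h$ on all of $\mathbb{R}^m$ (as posited in \eqref{problem:primal}), which makes $h^*$ supercoercive, combined with your vanishing-gap bound and a linear minorant for $p^*$; this pins down $\{y^k\}$, and then $\{z^k\}$ via the multiplier update.
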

\begin{proof} Observe first that Assumption~\ref{assumption:CNAL} ensures by \cite[Proposition~12.60]{rockafellar2009variational} that $h^*$ is also a strongly convex and $C^2$-smooth function. It follows from Fenchel’s duality theorem \cite[Corollary~31.2.1]{rockafellar1970convex} that strong duality holds in the setting of \eqref{problem:primal} and \eqref{problem:dual}. Employing finally \cite[Theorem~4]{rockafellar1976augmented} verifies the claimed convergence results.
\end{proof}

Next we consider the subproblem in \eqref{p2:alm-sub} formulated as
\begin{equation*}
\mbox{minimize }\;\Psi(y,z):=\mathcal L_{\sigma}(y,z;\tilde{x})\;\mbox{ over }\;(y,z)\in\mathbb{R}^m\times\mathbb{R}^n
\end{equation*}
for which the optimal solution is given by
\begin{equation*}
\bar y = \argmin\psi(y), \quad \bar z = \Prox_{p^*/\sigma}(\tilde{x}/\sigma-\mathcal A^*\bar y+c)
\end{equation*}
via the proximal mapping \eqref{prox}, where the function $\psi$ is defined by
\begin{equation}\label{psi}
\begin{aligned}
\psi(y):=\inf_z \Psi(y,z)=h^*(y)+ e_{\frac{1}{\sigma}}p^*\big(\tilde{x}/\sigma-(\mathcal A^*y-c)\big)-\frac{1}{2\sigma}\|\tilde{x}\|^2
\end{aligned}
\end{equation}
via the Moreau envelope \eqref{moreau}. To solve subproblem \eqref{p2:alm-sub}, we apply GRNM-W to the minimization of $\psi$. The gradient of $\psi$ is computed by
\begin{equation*}
\begin{aligned}
\nabla\psi(y)&=\nabla h^*(y) - \mathcal A \nabla e_{\frac{1}{\sigma}}p^*\big(\tilde{x}/\sigma-(\mathcal A^*y-c)\big)\\
&= \nabla h^*(y) - \mathcal A\big(\sigma (u'-\Prox_{\frac{1}{\sigma}p^*}(u'))\big)\\
&= \nabla h^*(y) - \mathcal{A}\big(\sigma (\sigma^{-1}\Prox_{\sigma p}(\sigma u'))\big)\\
&= \nabla h^*(y) - \mathcal A \Prox_{\sigma p}\big(\tilde{x}-\sigma(\mathcal A^*y-c)\big),\\
\end{aligned}
\end{equation*}
where \(u':=\tilde{x}/\sigma-(\mathcal A^*y-c)\). Note that $\nabla\psi$ is locally Lipschitz continuous while being nonsmooth due to the presence of the proximal mapping.

To implement Algorithm~\ref{alg_CNAL}, we need to constructively evaluate the generalized Hessian of $\psi$ (i.e., the limiting coderivative of $\nabla\psi$) in Step~2. The following two lemmas of their own interest are useful to estimate the required second-order construction.

\begin{lemma}[\bf limiting coderivatives of special compositions]\label{prop:AS} Consider the composition \(G: = \mathcal A \circ S\), where \(S:\mathbb{R}^d\to\mathbb{R}^n\) is Lipschitz continuous around $\bar x\in\mathbb{R}^d$, and where $\mathcal A:\mathbb{R}^n\to\mathbb{R}^m$ is linear. Then we have the equality
\begin{equation}\label{eq:DAS}
 D^* G(\bar x)(v) = D^* S(\bar x)(\mathcal A^*v)\;\mbox{ for all }\;v\in\mathbb{R}^m.
\end{equation}
\end{lemma}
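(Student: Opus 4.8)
The plan is to avoid computing the limiting normal cone to $\gph G$ directly and instead reduce both coderivatives to ordinary limiting subdifferentials of scalar functions via the scalarization formula for Lipschitzian mappings. First I would record that $G=\mathcal A\circ S$ is itself locally Lipschitz around $\bar x$: if $S$ has Lipschitz modulus $\ell$ near $\bar x$, then $\|G(x)-G(x')\|=\|\mathcal A(S(x)-S(x'))\|\le\|\mathcal A\|\,\ell\,\|x-x'\|$. Consequently the scalarization formula \cite[Theorem~1.90]{mordukhovich2006variational} applies to both $S$ and $G$, yielding $D^*G(\bar x)(v)=\partial\langle v,G\rangle(\bar x)$ for all $v\in\mathbb R^m$ and $D^*S(\bar x)(q)=\partial\langle q,S\rangle(\bar x)$ for all $q\in\mathbb R^n$, where $\langle v,G\rangle$ denotes the real-valued function $x\mapsto\langle v,G(x)\rangle$.

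The crux is then the elementary identity of scalar functions
\begin{equation*}
\langle v,G(x)\rangle=\langle v,\mathcal A S(x)\rangle=\langle \mathcal A^*v,S(x)\rangle\quad\text{for all }x,
\end{equation*}
which is just the definition of the adjoint $\mathcal A^*=\mathcal A^T$. Since $\langle v,G\rangle$ and $\langle\mathcal A^*v,S\rangle$ are literally the same function on $\mathbb R^d$, their limiting subdifferentials at $\bar x$ coincide, and combining this with the two scalarizations gives
\begin{equation*}
D^*G(\bar x)(v)=\partial\langle v,G\rangle(\bar x)=\partial\langle\mathcal A^*v,S\rangle(\bar x)=D^*S(\bar x)(\mathcal A^*v),
\end{equation*}
which is exactly \eqref{eq:DAS}.

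I expect the only delicate point to be the use of scalarization as an \emph{equality} rather than a mere inclusion; this is where single-valuedness and local Lipschitz continuity of $S$ (and hence of $G$) are essential, and these are precisely the hypotheses of the lemma. For comparison, a direct argument from definitions \eqref{lnc} and \eqref{lim-cod} would first establish the pointwise equivalence $(u,-v)\in\widehat N_{\gph G}(x,G(x))\iff(u,-\mathcal A^*v)\in\widehat N_{\gph S}(x,S(x))$, valid because the two defining difference quotients share the same numerator $\langle u,x'-x\rangle-\langle\mathcal A^*v,S(x')-S(x)\rangle$ and have denominators that are comparable up to constants depending only on $\|\mathcal A\|$ and $\ell$. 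The forward inclusion $\subseteq$ then passes to the limit without difficulty. The genuine obstacle is the reverse inclusion: the regular normals $(u_k,\eta_k)\to(u,-\mathcal A^*v)$ to $\gph S$ supplied by the limiting construction need not have their $w$-components $\eta_k$ lying in $\operatorname{range}(\mathcal A^*)$, so one cannot immediately write $\eta_k=-\mathcal A^*v_k$ and invoke the pointwise equivalence. Because the scalarization route bypasses this subspace issue entirely, I would adopt it as the proof of the lemma.
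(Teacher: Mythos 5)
Your proof is correct, and it takes a genuinely different route from the paper's. The paper argues from the definitions \eqref{rnc}--\eqref{lim-cod}: it establishes the pointwise equality of \emph{regular} coderivatives $\widehat D^*G(x)(v)=\widehat D^*S(x)(\mathcal A^*v)$ by noting that the two Fr\'echet quotients share the numerator $\braket{u,x'-x}-\braket{\mathcal A^*v,S(x')-S(x)}$ while their denominators are comparable up to the constants $\max\{1,\|\mathcal A\|\}$ and $1+L$ ($L$ a Lipschitz modulus of $S$), and then passes to the limit to get \eqref{eq:DAS} --- exactly the ``direct argument'' you sketch and then discard. Your route instead scalarizes: $G$ inherits local Lipschitz continuity from $S$, the equality form of the scalarization formula \cite[Theorem~1.90]{mordukhovich2006variational} applies to both $G$ and $S$, and \eqref{eq:DAS} is immediate because $\braket{v,G(\cdot)}$ and $\braket{\mathcal A^*v,S(\cdot)}$ are literally the same scalar function. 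The comparison favors you on rigor: the obstruction you flag for the direct route is in fact present in the paper's own proof, since the pointwise regular equality together with the limiting construction \eqref{lnc} yields the inclusion $D^*G(\bar x)(v)\subseteq D^*S(\bar x)(\mathcal A^*v)$ without difficulty (a sequence $v_k\to v$ transforms into $\mathcal A^*v_k\to\mathcal A^*v$), whereas the reverse inclusion does not follow ``by definition \eqref{lim-cod}'' alone: the approximating regular normals $(u_k,\eta_k)\to(u,-\mathcal A^*v)$ to $\gph S$ need not have $\eta_k\in\operatorname{range}(\mathcal A^*)$, so they cannot be fed back through the pointwise equivalence (unless $\mathcal A^*$ is surjective, which fails for the general data matrix in the intended application) --- and that reverse inclusion is precisely the one Theorem~\ref{2lower} consumes to obtain its \emph{lower} estimate of $\partial^2\psi$. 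What the paper's computation buys is self-containedness (only the definitions are used) plus a regular-coderivative identity valid at every point; what yours buys is a complete and shorter proof of the limiting statement, at the modest price of invoking the scalarization theorem. Your version is the one I would adopt.
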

\begin{proof} We aim at checking the equality
\begin{equation}\label{reg-equ}
\widehat{D}^* G(\bar x)(v) = \widehat D^* S(\bar x)(\mathcal A^* v),\quad v\in\mathbb{R}^m,
\end{equation}
for the regular coderivative \eqref{reg-cod}, which yields \eqref{eq:DAS} by definition \eqref{lim-cod} and the arbitrary choice of $\bar x$. To proceed with verifying \eqref{reg-equ}, observe that $u\in\widehat{D}^* G(\bar x)(v)=\widehat{D}^*(\mathcal A\circ S)(\bar x)(v)$ gives us $(u,-v)\in\widehat N_{\gph \mathcal A\circ S}(\bar x, \mathcal A(S(\bar x)))$, which means by \eqref{rnc} that
\begin{equation}\label{eq:G}
\limsup_{(x,\mathcal A(S(x)))\to (\bar x, \mathcal A(S(\bar x)))}\frac{\braket{u,x-\bar x}-\braket{v,\mathcal A(S(x))-\mathcal A(S(\bar x))}}{\|x-\bar x\|+\|\mathcal A(S(x))-\mathcal A(S(\bar x))\|} \le 0.
\end{equation}
On the other hand, we have by  $u\in\widehat D^* S(\bar x)(\mathcal A^* v)$ that \((u,-\mathcal A^*v)\in \widehat N_{\gph\,S}(\bar x,S(\bar x))\), i.e.,
\begin{equation}\label{eq:AS}
\limsup_{(x,\mathcal S(x))\to (\bar x, \mathcal S(\bar x))}\frac{\braket{u,x-\bar x}-\braket{\mathcal A^*v,S(x)-S(\bar x)}}{\|x-\bar x\|+\|S(x)-S(\bar x)\|} \le 0.
\end{equation}
Let us show that \eqref{eq:G} and \eqref{eq:AS} are equivalent. Indeed, by
$$
\|x-\bar x\|+\|\mathcal AS(x)-\mathcal AS(\bar x)\| \le \|x-\bar x\|+\|\mathcal A\|\cdot\|S(x)-S(\bar x)\|\le C (\|x-\bar x\|+\|S(x)-S(\bar x)\|)
$$
with $C:=\max\{1,\|\mathcal A\|\}$, it follows from  \eqref{eq:G} that
\begin{equation*}
\begin{aligned}
&\limsup_{(x,\mathcal S(x))\to (\bar x, \mathcal S(\bar x))}\frac{\braket{u,x-\bar x}-\braket{\mathcal A^*v,S(x)-S(\bar x)}}{\|x-\bar x\|+\|S(x)-S(\bar x)\|}\\
=&\limsup_{(x,\mathcal S(x))\to (\bar x, \mathcal S(\bar x))}\frac{\braket{u,x-\bar x}-\braket{v,\mathcal A S(x)-\mathcal AS(\bar x)}}{\|x-\bar x\|+\|S(x)-S(\bar x)\|}\\
\le&\limsup_{(x,\mathcal AS(x))\to (\bar x, \mathcal AS(\bar x))} C \frac{\braket{u,x-\bar x}-\braket{v,\mathcal AS(x)-\mathcal AS(\bar x)}}{\|x-\bar x\|+\|\mathcal AS(x)-\mathcal AS(\bar x)\|}\le 0,
\end{aligned}
\end{equation*}
which tells us that \eqref{eq:AS} is satisfied. Conversely, if \eqref{eq:AS} holds, then the Lipschitz continuity of $S$ around $\bar x$with some modulus $L\ge 0$ yields
$$
\|x-\bar x\|+\|S(x)-S(\bar x)\|\le (1+L)\|x-\bar x\|\le (1+L)(\|x-\bar x\|+\|\mathcal AS(x)-\mathcal AS(\bar x)\|),
$$
which ensures in turn the relationships
\begin{equation*}
\begin{aligned}
&\limsup_{(x,\mathcal AS(x))\to (\bar x, \mathcal AS(\bar x))}
\frac{\braket{u,x-\bar x}-\braket{v,\mathcal AS(x)-\mathcal AS(\bar x)}}{\|x-\bar x\|+\|\mathcal AS(x)-\mathcal AS(\bar x)\|}\\
=&\limsup_{(x,\mathcal AS(x))\to (\bar x, \mathcal AS(\bar x))}
\frac{\braket{u,x-\bar x}-\braket{\mathcal A^*v,S(x)-S(\bar x)}}{\|x-\bar x\|+\|\mathcal AS(x)-\mathcal AS(\bar x)\|}\\
\le&\limsup_{(x,\mathcal S(x))\to (\bar x, \mathcal S(\bar x))}
(1+L)\frac{\braket{u,x-\bar x}-\braket{\mathcal A^*v,S(x)-S(\bar x)}}{\|x-\bar x\|+\|S(x)-S(\bar x)\|}\le 0.
\end{aligned}
\end{equation*}
This tells us that \eqref{eq:G} is satisfied and thus verifies the claim.
\end{proof}

\begin{lemma}[\bf regular coderivatives of special compositions]\label{prop:SA}
Let $F=S\circ\mathcal B$, where $S:\mathcal\mathbb{R}^m\to\mathbb{R}^q$ is a continuous mapping, and where $\mathcal B:\mathbb{R}^n\to\mathbb{R}^m$ is an affine mapping given by $\mathcal Bx:=\mathcal Ax+b$ with a linear operator$\mathcal A:\mathbb{R}^n\to\mathbb{R}^m$ and $b\in\R^m$. Then we have the inclusion
\begin{equation}\label{reg-inc}
 \mathcal A^* \widehat{D}^*S(\mathcal A\bar x + b)(v) \subset \widehat{D}^*F(\bar x)(v)\;\mbox{ for all }\;v\in\mathbb{R}^q.
\end{equation}
\end{lemma}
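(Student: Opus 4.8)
The plan is to unwind both regular coderivatives through the limsup description \eqref{rnc} of the regular normal cone and to transport the defining inequality for $S$ to $F$ via the change of variables $y=\mathcal Bx$. Set $\bar y:=\mathcal A\bar x+b=\mathcal B\bar x$, so that $F(\bar x)=S(\bar y)$, fix $w\in\widehat D^*S(\bar y)(v)$, and aim to prove $\mathcal A^*w\in\widehat D^*F(\bar x)(v)$, i.e., $(\mathcal A^*w,-v)\in\widehat N_{\gph F}(\bar x,F(\bar x))$. First I would record the algebraic identity that drives the argument: for any $x$, writing $y=\mathcal Bx$, the constant $b$ cancels in $\braket{\mathcal A^*w,x-\bar x}=\braket{w,\mathcal A(x-\bar x)}=\braket{w,y-\bar y}$, while $F(x)-F(\bar x)=S(y)-S(\bar y)$. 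Hence the numerator of the difference quotient defining $\widehat N_{\gph F}$ coincides with the numerator of the quotient defining $\widehat N_{\gph S}$, namely the common quantity $N:=\braket{w,y-\bar y}-\braket{v,S(y)-S(\bar y)}$.

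Next I would compare the denominators. From $\|y-\bar y\|=\|\mathcal A(x-\bar x)\|\le\|\mathcal A\|\,\|x-\bar x\|$ it follows that the $S$-denominator $D_S:=\|y-\bar y\|+\|S(y)-S(\bar y)\|$ is bounded by $C\,D_F$, where $D_F:=\|x-\bar x\|+\|F(x)-F(\bar x)\|$ is the $F$-denominator and $C:=\max\{1,\|\mathcal A\|\}$. Moreover, as $(x,F(x))\to(\bar x,F(\bar x))$ in $\gph F$ (equivalently $x\to\bar x$, since $F$ is continuous), the continuity of $\mathcal B$ and $S$ forces $(y,S(y))\to(\bar y,S(\bar y))$ in $\gph S$, so the defining inequality $\limsup_{(y,S(y))\to(\bar y,S(\bar y))}N/D_S\le0$ associated with $w\in\widehat D^*S(\bar y)(v)$ is available along exactly these points.

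Finally I would pass to the limit through the chain $\limsup N/D_F\le C\,\limsup N/D_S\le0$, taken over $(x,F(x))\to(\bar x,F(\bar x))$, which yields $(\mathcal A^*w,-v)\in\widehat N_{\gph F}(\bar x,F(\bar x))$ and hence the claim. I expect the delicate point—and the only place needing genuine care—to be the first inequality of that chain: since $D_S\le C\,D_F$ runs in a single direction, $N/D_F\le C\,N/D_S$ is valid only where $N\ge0$, but this is precisely the region on which a positive limsup value can be attained, so the estimate is enough to force the $F$-limsup to be nonpositive. The broader obstacle is conceptual rather than computational: because $\mathcal B$ need not be surjective and $S$ is assumed merely continuous (not Lipschitz), the reverse denominator bound is unavailable, so only the stated inclusion can be expected here, in contrast with the equality obtained for the outer composition in Lemma~\ref{prop:AS}.
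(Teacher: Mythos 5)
Your proposal is correct and follows essentially the same route as the paper's proof: both unwind the two regular coderivatives through the limsup definition \eqref{rnc}, exploit the numerator identity $\braket{\mathcal A^*w,x-\bar x}=\braket{w,\mathcal B x-\mathcal B\bar x}$, bound the $S$-denominator by $C=\max\{1,\|\mathcal A\|\}$ times the $F$-denominator, and pass from the limsup over $x\to\bar x$ to the larger limsup over $y=\mathcal Ax+b\to\mathcal A\bar x+b$. Your explicit handling of the sign caveat (that the pointwise comparison $N/D_F\le C\,N/D_S$ is only valid where $N\ge 0$, which suffices because a positive limsup must be attained there) is in fact slightly more careful than the paper's chain of inequalities, which writes the comparison as if it held everywhere.
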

\begin{proof} To verify \eqref{reg-inc}, pick $u\in\mathcal A^* \widehat{D}^*S(\mathcal A\bar x+b)(v)$ and get $u=\mathcal A^*w$ with $(w,-v)\in \widehat N_{\gph S}(\mathcal A\bar x+b, S(\mathcal A\bar x+b))$. It follows from definition \eqref{rnc} that
\begin{equation*}
\limsup_{(y,S(y))\to(\mathcal A\bar x+b, S(\mathcal A\bar x+b))}
\frac{\braket{w,y-\mathcal A\bar x-b}-\braket{v,S(y)-S(\mathcal A \bar x+b)}}{\|y-\mathcal A\bar x-b\|+\|S(y)-S(\mathcal A \bar x+b)\|}\le 0.
\end{equation*}
Taking now  $y=Ax+b$ tells us that
\begin{equation*}
\limsup_{(\mathcal Ax+b,S(\mathcal Ax+b))\to(\mathcal A\bar x+b, S(\mathcal A\bar x+b))}
 \frac{\braket{\mathcal A^*w,x-\bar x}-\braket{v,S(\mathcal Ax+b)-S(\mathcal A \bar x+b)}}{\|\mathcal Ax-\mathcal A\bar x\|+\|S(\mathcal Ax+b)-S(\mathcal A \bar x+b)\|}\le 0.
\end{equation*}
Due to the obvious estimates
$$
\|\mathcal Ax-\mathcal A\bar x\|+\|S(\mathcal Ax+b)-S(\mathcal A\bar x+b)\| \le \|\mathcal A\|\cdot\|x-\bar x\|+\|S(\mathcal Ax+b)-S(\mathcal A\bar x+b)\|\le C\big(\|x-\bar x\|+\|S(x)-S(\bar x)\|\big)
$$
with $C=\max\{1,\|\mathcal A\|\}$, we deduce that
\begin{equation*}
\begin{aligned}
&\limsup_{(x,S(\mathcal Ax+b))\to(\bar x, S(\mathcal A\bar x+b))}
\frac{\braket{u,x-\bar x}-\braket{v,S(\mathcal Ax+b)-S(\mathcal A \bar x+b)}}{\|x-\bar x\|+\|S(\mathcal Ax+b)-S(\mathcal A \bar x+b)\|}\\
\le&\limsup_{(x,S(\mathcal Ax+b))\to(\bar x, S(\mathcal A\bar x+b))}
C\frac{\braket{u,x-\bar x}-\braket{v,S(\mathcal Ax+b)-S(\mathcal A \bar x+b)}}{\|\mathcal A x-\mathcal A \bar x\|+\|S(\mathcal Ax+b)-S(\mathcal A \bar x+b)\|}\\
\le&\limsup_{(\mathcal Ax+b,S(\mathcal Ax+b))\to(\mathcal A\bar x+b, S(\mathcal A\bar x+b))}
C\frac{\braket{u,x-\bar x}-\braket{v,S(\mathcal Ax+b)-S(\mathcal A \bar x+b)}}{\|\mathcal Ax-\mathcal A\bar x\|+\|S(\mathcal Ax+b)-S(\mathcal A \bar x+b)\|}\\
=&\limsup_{(\mathcal Ax+b,S(\mathcal Ax+b))\to(\mathcal A\bar x+b, S(\mathcal A\bar x+b))}
C\frac{\braket{\mathcal A^*w,x-\bar x}-\braket{v,S(\mathcal Ax+b)-S(\mathcal A \bar x+b)}}{\|\mathcal Ax-\mathcal A\bar x\|+\|S(\mathcal Ax+b)-S(\mathcal A \bar x+b)\|}\le 0,
\end{aligned}
\end{equation*}
which shows that $u\in \widehat{D}^*F(\bar x)(v)$ and thus completes the proof.
\end{proof}

Finally in this section, we obtain a lower estimate of the generalized Hessian $\partial^2\psi$, which is sufficient for computational purposes in what follows.

\begin{theorem}[\bf lower estimates for generalized Hessians]\label{2lower}
The generalized Hessian of $\psi$ from \eqref{psi} admits the following lower estimate, where $u=\tilde{x}-\sigma(\mathcal A^*y-c)$:
\begin{equation*}
\begin{aligned}
\partial^2\psi(y)(w)\,\supset\,\nabla^2 h^*(y)(w) - \sigma\mathcal A\big(\widehat D^*\Prox_{\sigma p}\big)(u)(-\mathcal A^* w),\quad w\in\mathbb{R}^m.
\end{aligned}
\end{equation*}
\end{theorem}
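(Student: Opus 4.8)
The plan is to combine the coderivative sum rule with the two composition lemmas just established. First I would record, from the gradient formula preceding the theorem, that $\nabla\psi=\nabla h^*+G$, where $G(y):=-\mathcal A\,\Prox_{\sigma p}(\mathcal B y)$ and $\mathcal B y:=-\sigma\mathcal A^* y+(\tilde x+\sigma c)$ is affine with linear part $-\sigma\mathcal A^*$ and satisfies $\mathcal B y=u$. Since $h$ is strongly convex and $C^2$-smooth, so is $h^*$ (by \cite[Proposition~12.60]{rockafellar2009variational}), whence $\nabla h^*$ is continuously differentiable. Applying the coderivative sum rule with a strictly differentiable summand (\cite[Theorem~1.62]{mordukhovich2006variational}) then gives, for every $w\in\R^m$,
\[
\partial^2\psi(y)(w)=D^*(\nabla\psi)(y)(w)=\nabla^2 h^*(y)w+D^*G(y)(w),
\]
so it remains to produce the lower estimate $D^*G(y)(w)\supset-\sigma\mathcal A\,\widehat D^*\Prox_{\sigma p}(u)(-\mathcal A^* w)$.

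Next I would peel off the outer linear map. Writing $G=(-\mathcal A)\circ F$ with $F:=\Prox_{\sigma p}\circ\mathcal B$, and noting that $F$ is Lipschitz (the proximal mapping of a convex function is nonexpansive and $\mathcal B$ is affine), Lemma~\ref{prop:AS} applies with outer operator $-\mathcal A$ and yields the exact identity
\[
D^*G(y)(w)=D^*F(y)\big((-\mathcal A)^* w\big)=D^*F(y)(-\mathcal A^* w).
\]
To estimate $D^*F(y)$ from below I would invoke Lemma~\ref{prop:SA} for the inner affine composition $F=\Prox_{\sigma p}\circ\mathcal B$: with the linear part $-\sigma\mathcal A^*$ playing the role of the operator in that lemma (so that its adjoint is $-\sigma\mathcal A$) and $\mathcal B y=u$, the lemma delivers
\[
-\sigma\mathcal A\,\widehat D^*\Prox_{\sigma p}(u)(v)\subset\widehat D^* F(y)(v)\quad\text{for all }v\in\R^n.
\]

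The one genuine subtlety — and the reason the conclusion is only a one-sided estimate — is the mismatch between the two lemmas: Lemma~\ref{prop:AS} produces the \emph{limiting} coderivative of the inner composition, whereas Lemma~\ref{prop:SA} furnishes only a chain-rule inclusion for the \emph{regular} coderivative. I would bridge this gap by the universal inclusion $\widehat D^* F(y)(v)\subset D^* F(y)(v)$, valid because the Fr\'echet normal cone is contained in the limiting one. Setting $v=-\mathcal A^* w$ and chaining the three relations then gives
\[
D^*G(y)(w)=D^*F(y)(-\mathcal A^* w)\supset\widehat D^* F(y)(-\mathcal A^* w)\supset-\sigma\mathcal A\,\widehat D^*\Prox_{\sigma p}(u)(-\mathcal A^* w),
\]
and substituting this into the sum-rule identity yields exactly the asserted lower estimate for $\partial^2\psi(y)(w)$. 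The remaining work is routine bookkeeping: tracking the sign and scaling factor $-\sigma\mathcal A^*$ coming from $\mathcal B$, and reading $\nabla^2 h^*(y)w$ as $\nabla^2 h^*(y)(w)$ in the paper's notation.
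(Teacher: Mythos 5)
Your proof is correct and follows essentially the same route as the paper: the same splitting $\nabla\psi=\nabla h^*-\mathcal A\,(\Prox_{\sigma p}\circ\mathcal B)$, the coderivative sum rule with the smooth term $\nabla h^*$, Lemma~\ref{prop:AS} to peel off the outer linear map exactly, Lemma~\ref{prop:SA} for the inner affine composition, and the inclusion $\widehat D^*\subset D^*$ to bridge the regular and limiting coderivatives. The only differences are cosmetic (a different but equally valid citation for the sum rule, and your explicit verification that the proximal mapping is Lipschitz, which the paper leaves implicit).
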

\begin{proof}
Note that the standing Assumption~\ref{assumption:CNAL} implies that $h^*$ is $C^2$-smooth. It follows from the coderivative sum rule in \cite[Theorem~3.9]{mordukhovich2018variational} that
$$
\big(D^*\nabla\psi\big)(y)(w)=\nabla^2h^*(y)(w)+\big(D^*(-\mathcal{A}S)\big)(y)(w),
$$
where $S(y):=\Prox_{\sigma p}(\tilde{x}-\sigma(\mathcal A^*y-c))$.
By Lemma \ref{prop:AS}, we have $\big(D^*(-\mathcal A S)\big)(y)(w)=D^*S(y)(-\mathcal A^*w)$.
Finally, we deduce from Lemma~\ref{prop:SA} that
$$
D^*S(y)(-\mathcal A^*w)\supset \widehat D^*S(y)(-\mathcal A^*w)\supset -\sigma\mathcal{A}\big(\widehat D^*\Prox_{\sigma p}\big)(u)(-\mathcal A^*w),
$$
which therefore justifies the claimed formula.
\end{proof}\vspace*{-0.2in}

\section{Applications of CNFB to Support Vector Machines}\label{sec_svm}\vspace*{-0.05in}

In this section, we apply the proposed CNFB to optimization problems in support vector machines, which can be formulated as a quadratic programming problem with some special structure. The constructive implementations of CNFB to solve such problems with conducting numerical experiments require {\em explicit calculations} of the {\em generalized Hessians} for the functions in question.\vspace*{-0.1in}

\subsection{Support Vector Machines}\label{subsec:svm}

A {\em support vector machine} (SVM) is a machine learning model for binary classification. Given a training set \(\{\mathbf{x}_i, y_i\}_{i=1}^n\) with \(\mathbf{x}_i\in\mathbb{R}^d\) and \(y_i\in\{1,-1\}\), the aim of SVMs is to select an appropriate class of {\em classifiers} for the training data and to optimize its characteristics under the imposed requirements; see \cite[Chapter~6]{svm} with the references therein.

For the class of linear classifiers, the optimization problem in SVMs is formulated as follows:
\begin{equation}\label{svm-primal}
\begin{aligned}
\mbox{minimize }\;\quad&\frac{1}{2}\|\mathbf{w}\|^2 + C\sum_{i=1}^n \xi_i\;\mbox{ over }\;\mathbf{w},b,\mathbf{\xi}\\
\mbox{subject to }\quad &y_i(\mathbf{w}^T\mathbf{x}_i+b)\ge 1-\xi_i,\quad \xi_i\ge 0\;\mbox{ for }\;i=1,\ldots,n.
\end{aligned}
\end{equation}
The dual problem of \eqref{svm-primal} is defined by
\begin{equation}\label{svm-dual}
\begin{aligned}
\mbox{minimize }\;\quad &\frac{1}{2}\mathbf{\lambda}^T D\mathbf\lambda-\mathbf{\lambda}^T\mathbf{e}\;\mbox{ over }\;{\mathbf\lambda}\in\mathbb{R}^n\\
\mbox{subject to }\;\quad &\mathbf{\lambda}^T\mathbf{y}=0,\quad\mathbf 0 \le \mathbf{\lambda}\le C\mathbf{e},
\end{aligned}
\end{equation}
where $\mathbf e\in\R^n$ is the all-one vector, and where $D\in\R^{n\times n}$ is a positive-semidefinite symmetric matrix with \(D_{ij}=y_iy_j\mathbf{x}_i^T\mathbf{x}_j\). We can see that \eqref{svm-dual} is a convex quadratic programming problem with a single linear constraint and bound constraints on the variables (i.e., the feasible region is the intersection of a hyperplane and a box). Note that for nonlinear classifiers in SVMs, the dual optimization problem can also be written in such a form by using a kernel function.

Having this in mind, we consider below the following general class of {\em convex quadratic programs} with a {\em single linear constraint} and {\em bound constraints} on the variables (abbr.\ SLBQP):
\begin{equation}\label{eq:SLBQP}
\begin{aligned}
\mbox{minimize }\;\quad &\frac{1}{2}x^TQx+c^Tx\;\mbox{ over }\;x\in\mathbb{R}^n\\
\mbox{subject to }\;\quad& a^Tx=b,\quad l\le x \le u,
\end{aligned}
\end{equation}
where $Q\in\R^{n\times n}$ is a positive-semidefinite symmetric matrix, $c\in\R^n$, $a\in\R^n$, $b\in\R$, $l\in (\R\cup\{-\infty\})^n$, and $u\in (\R\cup\{\infty\})^n$. This class includes support vector machine and bound-constrained quadratic programming problems as special cases.

Observe that the quadratic program \eqref{eq:SLBQP} is a special case of the convex composite minimization problems of type \eqref{cco_quadratic} with $\vp(x):=f(x)+g(x)$, $f(x):=\frac{1}{2}x^TQx+c^Tx$, $g(x):=\delta_\Gamma(x)$, where $\delta_\Gamma$ is the indicator function of the feasible set $\Gamma:=\{x\in\R^n\;|\;a^Tx=b,\;l\le x \le u\}$. To apply the proposed CNFB to solving problem \eqref{eq:SLBQP}, we need to explicitly calculate the generalized Hessian of FBE, which reduces to calculating the coderivative of the (minus) proximal mapping (see Step~3 of Algorithm~\ref{alg_CNFB}) and eventually to calculating the coderivative of the (minus) projection operator $\Proj_\Gamma$. This goal is achieved  in the next subsection.\vspace*{-0.1in}

\subsection{Coderivatives of Polyhedral Projectors}

Let $\Gamma$ be a nonempty convex polyhedral set given by
\begin{equation}\label{Gamma}
\begin{aligned}
\Gamma&=\big\{x\in \R^n\;\big|\;\braket{a_i,x}=b_i\;\mbox{ for }\;i\in R,\quad\braket{c_i,x}\le d_i\;\mbox{ for }\;i\in S\big\}\\
&=\big\{x\in \R^n\;\big|\; A^Tx=b,\;C^Tx\le d\big\},
 \end{aligned}
\end{equation}
where $b\in\R^{|R|\times 1}$, $d\in\R^{\mid S \mid\times 1}$, and where the matrices $A^T\in\R^{|R|\times n}$ and $C^T\in\R^{|S|\times n}$ are formed by the row vectors $a_i^T$ as $i\in R$ and $c_i^T$ as $i\in S$,
respectively.\vspace*{0.03in}

The following result is taken from \cite[Corollary~4.3]{Henrion2010}.

\begin{lemma}[\bf coderivative calculations for normals to polyhedra]\label{thm:coderivative-normal-cone} Given a convex polyhedron $\Gamma$ in \eqref{Gamma}, define the family of index sets
\begin{equation*}
\mathcal I_\Gamma:=\big\{S'\subset S\;\big|\;\exists\,x\in\Gamma\;\text{ such that }\;\braket{c_i,x}=d_i,\,i\in S'\;\text{ and }\;\braket{c_i,x}<d_i,\,i\in S\backslash S'\big\}
\end{equation*}
and pick any $\bar x^*\in N_\Gamma(\bar x)$\;\mbox{ with }\;$\bar x\in\Gamma$.  Then we have $z\in D^* N_\Gamma(\bar x;\bar x^*)(w)$ if and only if
\begin{equation}\label{cod-cal}
\left\{\begin{aligned}
&-w \in B_{J,K}:=\big\{x\;\big|\;\braket{a_i,x}=0,\,i\in R;\,\,\braket{c_i,x}=0,\,i\in J;\,\,\braket{c_i,x}\le 0,\,i\in K\big\},\\
&z \in A_{J,K}:=B_{J,K}^*=\mathrm{span}\big\{a_i,\,i\in R\big\}+\mathrm{span}\big\{c_i,\,i\in J\big\}+\mathrm{cone}\big\{c_i,\, i\in K\backslash J\big\},
\end{aligned}\right.
\end{equation}
where $K\in\mathcal I_\Gamma$, $J\subset K \subset I(\bar x):=\big\{i\in S\;\big|\;\braket{c_i,\bar x}=d_i\big\}$, $\bar{x}^*\in \mathrm{span}\big\{a_i,\,i\in R\big\}+\mathrm{cone}\big\{c_i,\,i\in J\big\}$.
\end{lemma}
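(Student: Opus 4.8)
Since this result is quoted verbatim from \cite{Henrion2010}, the natural route is to reconstruct its derivation through the standard machinery for computing limiting normal cones to graphs of polyhedral normal-cone mappings. The plan is to reduce the coderivative of $N_\Gamma$ to that of a single polyhedral cone, and then to evaluate the limiting normal cone to the graph of the latter by a face-by-face analysis at its apex.

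First I would invoke the reduction lemma for polyhedral normal cones (Robinson; Dontchev--Rockafellar): since $\Gamma$ is polyhedral and $\bar x^*\in N_\Gamma(\bar x)$, there is a neighborhood of $(\bar x,\bar x^*)$ on which $\gph N_\Gamma$, translated by $(\bar x,\bar x^*)$, coincides with $\gph N_{\mathcal K}$, where $\mathcal K:=T_\Gamma(\bar x)\cap\{\bar x^*\}^\perp$ is the critical cone. This $\mathcal K$ is again polyhedral, cut out by $\braket{a_i,v}=0$ for $i\in R$, by $\braket{c_i,v}=0$ on the support of $\bar x^*$, and by $\braket{c_i,v}\le 0$ on the remaining active indices in $I(\bar x)$. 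Because the limiting normal cone is a local object invariant under translation, definition \eqref{lim-cod} yields
\[
z\in D^* N_\Gamma(\bar x;\bar x^*)(w)\iff (z,-w)\in N_{\gph N_{\mathcal K}}(0,0),
\]
which reduces the whole computation to the apex of the graph of one polyhedral normal-cone map.

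Next I would decompose $\gph N_{\mathcal K}$ as the finite union $\bigcup_{F}\big(F\times N_{\mathcal K}(F)\big)$ over the faces $F$ of $\mathcal K$, each face being labeled by its active set $K\subset I(\bar x)$; the tangent piece $F$ and the normal piece $N_{\mathcal K}(F)$ are exactly of the form of $B_{J,K}$ and $A_{J,K}=B_{J,K}^*$ appearing in \eqref{cod-cal}. The membership $K\in\mathcal I_\Gamma$ records that $F$ is a genuine relatively open face of $\mathcal K$, while the condition $\bar x^*\in\mathrm{span}\{a_i\}+\mathrm{cone}\{c_i,\,i\in J\}$ selects the faces compatible with the chosen base point. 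Computing $N_{\gph N_{\mathcal K}}(0,0)$ then amounts to taking limits of regular normals at points of $\gph N_{\mathcal K}$ approaching the apex through the relative interiors of the various face-products: the regular normal cone to each product factorizes into polar cones in the two coordinates, and passing to the limit couples a face indexed by $K$ with a subface indexed by $J\subset K$. Unwinding this pair of polar/orthogonality conditions and reading off the coordinate carrying $-w$ against the one carrying $z$ produces precisely the system in \eqref{cod-cal}.

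The main obstacle is this last step. The limiting normal cone to a union of polyhedral cones is in general strictly larger than the union of the individual normal cones, and capturing the extra limiting directions forces the combinatorial bookkeeping of all admissible pairs $(J,K)$ with $J\subset K\subset I(\bar x)$, together with the feasibility constraint $K\in\mathcal I_\Gamma$ and the support constraint on $\bar x^*$. Verifying that exactly these pairs, and no others, contribute is a careful face-adjacency enumeration in the face lattice of $\mathcal K$, which is the genuine technical content of \cite[Corollary~4.3]{Henrion2010}; accordingly, in the present paper it is legitimate to cite that result directly rather than reproduce the full enumeration.
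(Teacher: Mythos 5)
There is nothing in the paper to compare your argument against step by step: the paper gives no proof of this lemma at all, introducing it with the single sentence ``The following result is taken from \cite[Corollary~4.3]{Henrion2010}.'' So your bottom line---that it is legitimate to cite Henrion--Mordukhovich--Nam rather than reproduce the enumeration---is precisely what the authors do. Your sketch of what lies behind the citation is also the standard and correct architecture: the Robinson/Dontchev--Rockafellar reduction lemma replaces $\gph N_\Gamma$ near $(\bar x,\bar x^*)$ by a translate of $\gph N_{\mathcal K}$ for the critical cone $\mathcal K=T_\Gamma(\bar x)\cap\{\bar x^*\}^\perp$, so that $z\in D^*N_\Gamma(\bar x;\bar x^*)(w)$ iff $(z,-w)\in N_{\gph N_{\mathcal K}}(0,0)$; the graph of $N_{\mathcal K}$ is a finite union of polyhedral products indexed by faces; and the limiting normals at the apex are enumerated by pairs of faces, which is exactly where the index pairs $J\subset K\subset I(\bar x)$, the realizability condition $K\in\mathcal I_\Gamma$, and the compatibility condition on $\bar x^*$ in \eqref{cod-cal} come from (the cones $B_{J,K}$ are the face differences and $A_{J,K}$ their polars).

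One assertion in your final paragraph---which you present as the genuine technical content---is backwards. For a finite union $\Omega=\bigcup_F P_F$ of closed sets all containing the reference point, the limiting normal cone of the union is always \emph{contained} in the union of the limiting normal cones of the pieces: if $v_k\in\widehat N_\Omega(x_k)$ with $x_k\to\bar x$ in $\Omega$, then along a subsequence all $x_k$ lie in one piece $P_F$, and $\widehat N_\Omega(x_k)\subset\widehat N_{P_F}(x_k)$ because $P_F\subset\Omega$; hence $N_\Omega(\bar x)\subset\bigcup_F N_{P_F}(\bar x)$, and in the present setting this inclusion is typically strict. So the difficulty is not that the limiting cone of the union carries ``extra'' directions beyond the union of the pieces' normal cones; it is the opposite: that union is an overestimate, and the work consists in determining exactly which normals survive. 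What the limiting cone does strictly exceed is, on one side, the regular normal cone of the graph at the apex (which is only the intersection $\bigcap_F\widehat N_{P_F}(0,0)$) and, on the other, the limits obtained from relative-interior points of the maximal strata (the pairs with $J=K$); the remaining pairs with $J\subsetneq K$ are generated by base points on intermediate strata, where the regular normal cone is an intersection over all pieces through the point, and pinning down that precisely the pairs listed in \eqref{cod-cal} occur is the content of the cited enumeration. A small related slip: since larger index sets cut out smaller faces, the face determined by $K$ is contained in the face determined by $J\subset K$, so $J$ indexes a superface, not a subface---harmless here, but it is exactly the kind of order reversal the bookkeeping has to get right.
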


The next lemma follows from the fact that for any convex set $\Gamma$, we have $\Proj_\Gamma = (I+N_\Gamma)^{-1}$.

\begin{lemma}[\bf coderivatives of projections and normal cone mappings]\label{cod-proj} Let $u\in\R^n$. Then the inclusion
$w\in D^*(-\Proj_\Gamma)(u)(d)$ is equivalent to
$$
w+d\in D^*N_\Gamma\big(\Proj_\Gamma(u),u-\Proj_\Gamma(u)\big)(-w).
$$
\end{lemma}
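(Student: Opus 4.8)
The plan is to reduce everything to a single linear change of variables on the product space $\R^n\times\R^n$, exploiting the resolvent identity $\Proj_\Gamma=(I+N_\Gamma)^{-1}$ supplied in the statement. Write $p:=\Proj_\Gamma(u)$ and $v:=u-p$; the resolvent identity says precisely that $p=\Proj_\Gamma(u)\iff v\in N_\Gamma(p)$, so the pair $(p,v)$ ranges over $\gph N_\Gamma$ exactly as $(u,p)$ ranges over $\gph\Proj_\Gamma$. I would introduce the linear map $\Phi:\R^n\times\R^n\to\R^n\times\R^n$ defined by $\Phi(p,v):=(p+v,-p)$. First I would check that $\Phi$ is a linear isomorphism (with inverse $\Phi^{-1}(u,s)=(-s,u+s)$) and that it carries $\gph N_\Gamma$ \emph{onto} $\gph(-\Proj_\Gamma)$: if $v\in N_\Gamma(p)$ and $u:=p+v$, then $p=\Proj_\Gamma(u)$, whence $\Phi(p,v)=(u,-\Proj_\Gamma(u))\in\gph(-\Proj_\Gamma)$, and conversely every point of $\gph(-\Proj_\Gamma)$ arises this way.

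Second, I would invoke the transformation rule for the limiting normal cone \eqref{lnc} under an invertible linear map: for a linear isomorphism $\Phi$ and any $z$ in a closed set $\Omega$, one has $q\in N_{\Phi(\Omega)}(\Phi z)\iff \Phi^* q\in N_\Omega(z)$. A direct computation gives $\Phi^*(\alpha,\beta)=(\alpha-\beta,\alpha)$. Now I would unwind the coderivative definition \eqref{lim-cod}: the membership $w\in D^*(-\Proj_\Gamma)(u)(d)$ means exactly $(w,-d)\in N_{\gph(-\Proj_\Gamma)}\big(u,-\Proj_\Gamma(u)\big)=N_{\Phi(\gph N_\Gamma)}\big(\Phi(p,v)\big)$. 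Applying the transformation rule and computing $\Phi^*(w,-d)=(w+d,w)$, this is equivalent to $(w+d,w)\in N_{\gph N_\Gamma}(p,v)$. Reading the last inclusion back through \eqref{lim-cod} with the identification $z=w+d$ and $-q=w$ yields $w+d\in D^*N_\Gamma(p,v)(-w)$, which is the asserted equivalence since $p=\Proj_\Gamma(u)$ and $v=u-\Proj_\Gamma(u)$.

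The main obstacle is the second step: justifying that the transformation rule holds for the \emph{limiting} (and generally nonconvex) normal cone, not merely for its regular/convex counterpart. Here I would first verify it for the regular normal cone \eqref{rnc} by a difference-quotient computation—writing $y=\Phi z'$ and using $\braket{q,\Phi z'-\Phi z}=\braket{\Phi^*q,z'-z}$ together with the two-sided norm estimate $c\|z'-z\|\le\|\Phi z'-\Phi z\|\le C\|z'-z\|$ valid because $\Phi$ is an isomorphism—to obtain $\widehat N_{\Phi(\Omega)}(\Phi z)=(\Phi^*)^{-1}\widehat N_\Omega(z)$. The limiting version then follows by passing to the limit in \eqref{lnc} along sequences, since $\Phi$ and $\Phi^{-1}$ map convergent sequences to convergent sequences and preserve graph membership. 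Because $\Phi$ is a genuine isomorphism, both inclusions are exact, which is what produces the two-sided ``if and only if'' rather than a one-sided inclusion; the remaining work is purely the bookkeeping of signs in the adjoint $\Phi^*$ and in the coderivative definition, which is routine.
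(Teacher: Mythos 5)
Your proposal is correct and follows essentially the same route as the paper, which simply asserts that the lemma follows from the resolvent identity $\Proj_\Gamma=(I+N_\Gamma)^{-1}$: you make this precise by observing that the graphs $\gph N_\Gamma$ and $\gph(-\Proj_\Gamma)$ are related by a linear isomorphism and then applying the (exact) change-of-variables rule for limiting normal cones, with all signs in $\Phi^*(w,-d)=(w+d,w)$ checking out. Your proof supplies the details the paper omits, but it is the same underlying argument, not a different one.
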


Combining the two lemmas above, we arrive at the explicit formula to calculate coderivatives of (minus) projection operators.

\begin{theorem}[\bf calculating coderivatives of polyhedral projections]\label{thm:cod-proj} We have
$w\in D^*(-\Proj_\Gamma)(u)(d)$ if and only if $w\in B_{J,K}$ and $w+d\in A_{J,K}$, where all the data are taken from Lemma~{\rm\ref{thm:coderivative-normal-cone}}.
\end{theorem}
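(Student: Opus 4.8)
The plan is to chain the two preceding lemmas, with the only subtlety being the careful tracking of sign conventions in the arguments of the coderivatives.

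First I would invoke Lemma~\ref{cod-proj} to rewrite the membership $w\in D^*(-\Proj_\Gamma)(u)(d)$ as the equivalent condition
\[
w+d\in D^*N_\Gamma\big(\bar x,\bar x^*\big)(-w),
\]
where I set $\bar x:=\Proj_\Gamma(u)$ and $\bar x^*:=u-\Proj_\Gamma(u)$. Before applying the second lemma, I would record that the pair $(\bar x,\bar x^*)$ indeed lies in $\gph N_\Gamma$: by the classical variational characterization of the Euclidean projection onto a nonempty closed convex set, one has $u-\Proj_\Gamma(u)\in N_\Gamma(\Proj_\Gamma(u))$, so that $\bar x^*\in N_\Gamma(\bar x)$ and Lemma~\ref{thm:coderivative-normal-cone} is applicable at this base point.

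Next I would apply Lemma~\ref{thm:coderivative-normal-cone} at $(\bar x,\bar x^*)$, with $z:=w+d$ playing the role of the output vector and with $-w$ playing the role of the input direction (the argument written $w$ in the statement of that lemma). The lemma asserts that the membership holds exactly when $-(-w)\in B_{J,K}$ and $w+d\in A_{J,K}$ for some admissible index pair $(J,K)$ satisfying the stated constraints. Since $-(-w)=w$, the two conditions collapse to $w\in B_{J,K}$ and $w+d\in A_{J,K}$, which is precisely the claimed equivalence. All the index-set data $(J,K)$, the subspaces and cones $A_{J,K},B_{J,K}$, and the compatibility requirement relating $\bar x^*$ to the active cones are inherited verbatim from Lemma~\ref{thm:coderivative-normal-cone} evaluated at $(\bar x,\bar x^*)=\big(\Proj_\Gamma(u),\,u-\Proj_\Gamma(u)\big)$, so no further computation is needed beyond this substitution.

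The single point requiring attention — and the only place I would expect a slip — is the sign bookkeeping: the direction $-w$ fed into the normal-cone coderivative by Lemma~\ref{cod-proj} must be matched against the condition $-w\in B_{J,K}$ of Lemma~\ref{thm:coderivative-normal-cone}, and it is exactly this double negation that yields the clean final form $w\in B_{J,K}$. I would therefore write out the substitution of arguments explicitly to make the cancellation of signs transparent, after which the equivalence is immediate and the proof is complete.
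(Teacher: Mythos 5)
Your proof is correct and follows exactly the paper's route: the theorem is obtained there by simply combining Lemma~\ref{cod-proj} with Lemma~\ref{thm:coderivative-normal-cone}, which is precisely the chaining you carry out. Your explicit sign bookkeeping (the double negation $-(-w)=w$) and your verification that $\big(\Proj_\Gamma(u),\,u-\Proj_\Gamma(u)\big)\in\gph N_\Gamma$ are details the paper leaves implicit, and both are handled correctly.
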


It is computationally convenient to extract a {\em linear mapping} from the above coderivative descriptions. It can be done, e.g., by selecting the data as follows:
\[J=K=I\big(\Proj_\Gamma(u)\big)=\big\{i\in S\;\big|\;\braket{c_i,\Proj_\Gamma(u)}=d_i\big\}.\]
Then we get from \eqref{cod-cal} the description
\begin{equation*}
\left\{\begin{aligned}
&w \in B_{J,J}=\big\{x\;\big|\;\braket{a_i,x}=0,\;i\in R;\,\braket{c_i,x}=0,\;i\in J\big\},\\
&w+d \in A_{J,J}=\mathrm{span}\big\{a_i,\;i\in R\big\}+\mathrm{span}\big\{c_i,\;i\in J\big\}=B_{J,J}^\perp,
\end{aligned}\right.
\end{equation*}
which is equivalent to the simple inclusions
\begin{equation*}
\begin{aligned}&d \in -w+B_{J,J}^\perp,
&-w \in B_{J,J}.
\end{aligned}
\end{equation*}
Therefore, we arrive at the projection expression
\[-w=\Proj_{B_{J,J}}(d)\Longleftrightarrow w=-\Proj_{B_{J,J}}(d),\]
which finally brings us to the exact explicit formula
\begin{equation*}
\begin{aligned}
w=-(I-B^\dagger B)d,\;\mbox{ where }\;B:=\begin{pmatrix}
&c_i^T,\quad i\in J=I\big(\Proj_\Gamma(u)\big)\\
&a_i^T,\quad i\in R
\end{pmatrix},
\end{aligned}
\end{equation*}
where $B^\dagger$ stands for the Moore-Penrose inverse of the matrix $B$.

Now we examine computing the coderivative of the projector to a {\em specific convex polyhedron}, the intersection of a hyperplane and a box, given by
\begin{equation}\label{eq:feasible}
\begin{aligned}
\Gamma:=&\big\{x\in\R^n\;\big|\;\braket{a,x}=b,\;l\le x \le L\big\}\\
=&\big\{x\in\R^n\;\big|\;a^Tx=b;\,Cx\le d\big\},
\end{aligned}
\end{equation}
where $a\in\R^n$ and $l,L\in\Rb^n$ represent lower and upper bounds (which can be infinite), and where
\begin{equation*}
C:=\begin{pmatrix}
I_n\\
-I_n
\end{pmatrix},
\quad
d=\begin{pmatrix}
L\\
-l
\end{pmatrix}.
\end{equation*}
Without loss of generality, assume that $l_i<L_i$ for all $1\le i \le n$. According to the above analysis, we should take $B:=\begin{pmatrix}
C_{J}\\a^T\end{pmatrix}$,
where $C_J\in\R^{|J|\times n}$ is the matrix formed by the rows corresponding to the index set $J\subset [2n]$ defined by
\begin{equation*}
J:=I\big(\Proj_\Gamma(u)\big)=\big\{i\;\big|\;\Proj_\Gamma(u)_i = L_i\}\cup\big\{n+i\;\big|\;\Proj_\Gamma(u)_i = l_i\big\}.
\end{equation*}

\begin{proposition}[\bf coderivative calculations for specific polyhedra]\label{cod-spec} Let $\Gamma$ be given in \eqref{eq:feasible}, and let the matrix $P$ $($depending on $u\in\R^n)$ be defined by
\begin{equation*}
P:=I-B^\dagger B=\Sigma-\Sigma a(a^T\Sigma a)^\dagger a^T\Sigma=\left\{\begin{aligned}
&\Sigma &\text{ if}\quad \Sigma a =0,\\
&\Sigma-\|\Sigma a\|^{-2}\Sigma a a^T \Sigma &\text{ if}\quad \Sigma a\neq 0,
\end{aligned}\right.
\end{equation*}
where $\Sigma:=I-\Theta$ and where $\Theta:=\mathrm{diag}(\theta)$ with
\[\theta_i:=\left\{
\begin{aligned}
&0\;\textrm{ if}\quad l_i<\Proj_\Gamma(u)_i<L_i,\\
&1\;\textrm{ otherwise}.
\end{aligned}
\right.\]
Then we have the inclusion  $-Pd\subset D^*(-\Proj_\Gamma)(u)(d)$.
\end{proposition}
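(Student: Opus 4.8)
The plan is to build directly on the construction that immediately precedes the proposition, which already shows—via Theorem~\ref{thm:cod-proj} with the admissible choice $J=K=I(\Proj_\Gamma(u))$—that the particular selection $w=-(I-B^\dagger B)d$ lies in $D^*(-\Proj_\Gamma)(u)(d)$. Thus the whole task reduces to the purely linear-algebraic identity $I-B^\dagger B=P$ for the stated closed form of $P$. First I would recall the standard fact that $B^\dagger B$ is the orthogonal projector onto the row space $\mathrm{range}(B^T)=(\ker B)^\perp$, so that $I-B^\dagger B=\Proj_{\ker B}$. Hence it suffices to compute the orthogonal projection onto $\ker B=B_{J,J}$ and match it with $P$.

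The second step is to identify $\ker B$ in terms of $\Sigma$. With $B=\begin{pmatrix}C_J\\a^T\end{pmatrix}$ and $C,d$ as in \eqref{eq:feasible}, each active row of $C_J$ is $\pm e_i^T$, so the condition $C_J x=0$ forces $x_i=0$ precisely at those coordinates where $\Proj_\Gamma(u)_i$ attains a bound, i.e.\ exactly the indices with $\theta_i=1$. Since $l_i<L_i$ rules out a coordinate lying at both bounds simultaneously, this is unambiguous, and $\{x : x_i=0\text{ whenever }\theta_i=1\}$ coincides with $\mathrm{range}(\Sigma)$ because $\Sigma$ is the diagonal $0$--$1$ orthogonal projector onto the free coordinates. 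Consequently $\ker B=\mathrm{range}(\Sigma)\cap\{a\}^\perp$.

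The final step computes $\Proj_{\ker B}$ by a two-stage projection. Projecting onto $\mathrm{range}(\Sigma)$ is multiplication by $\Sigma$, and for $x\in\mathrm{range}(\Sigma)$ the remaining constraint $a^Tx=0$ agrees with $(\Sigma a)^Tx=0$ since $a^Tx=a^T\Sigma x$. When $\Sigma a=0$ this constraint is vacuous and $\Proj_{\ker B}=\Sigma=P$; when $\Sigma a\neq 0$, subtracting the component along $\Sigma a/\|\Sigma a\|$ and using $\Sigma^2=\Sigma$ together with $a^T\Sigma a=\|\Sigma a\|^2$ yields $\Proj_{\ker B}d=\Sigma d-\|\Sigma a\|^{-2}\Sigma a\,a^T\Sigma d=Pd$, matching both branches of the stated formula and the compact expression $P=\Sigma-\Sigma a(a^T\Sigma a)^\dagger a^T\Sigma$. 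The only delicate point—hence the step I expect to require the most care—is the bookkeeping around the Moore--Penrose inverse in the degenerate case $\Sigma a=0$, where one must check that $(a^T\Sigma a)^\dagger=0$ so that the scalar formula correctly collapses to $\Proj_{\ker B}=\Sigma$; the remainder is routine.
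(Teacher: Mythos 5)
Your proposal is correct and follows essentially the same route as the paper: the paper's proof is the one-line remark that the result ``follows from Theorem~\ref{thm:cod-proj} by the computation of the dagger matrix $B^\dagger$,'' relying on the selection $J=K=I\big(\Proj_\Gamma(u)\big)$ made in the text preceding the proposition. Your argument simply spells out that omitted computation—identifying $I-B^\dagger B$ as the orthogonal projector onto $\ker B=\mathrm{range}(\Sigma)\cap\{a\}^\perp$ and evaluating it in both the $\Sigma a=0$ and $\Sigma a\neq 0$ cases—and the details, including the Moore--Penrose bookkeeping $(a^T\Sigma a)^\dagger=0$ in the degenerate case, check out.
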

\begin{proof}
This follows from Theorem~\ref{thm:cod-proj} by the computation of the dagger matrix $B^\dagger$.
\end{proof}

By choosing the regularization matrices $B_k=R:=I-\gamma Q\succ 0$, we get the following Newton system in our algorithm CNFB:
\begin{equation*}
\big((1+\mu_k)I-PR\big)d=\Proj_\Gamma(u^k)-x^k,\;\mbox{ where }\;u^k:=x^k-\gamma(Qx^k+c).
\end{equation*}

Keeping in mind the structure of $P$, the Newton system in CNFB can be solved  as follows.\\[1ex]
$\bullet$ If $\Sigma a=0$, then we have
\begin{equation}
\left\{\begin{aligned}
&(1+\mu_k)d_\delta= (v^k-x^k)_\delta,\\
&(1+\mu_k)I_{\bar\delta}-R_{\bar\delta\times \bar\delta}d_{\bar\delta}=(v^k-x^k)_{\bar\delta}+R_{\bar\delta\times\delta}d_\delta,
\end{aligned}\right.
\end{equation}
where $v^k:=\Proj_\Gamma(u^k)$, $\delta:=\{i\;|\;v_i^k= l_i\;
\text{ or }\;v_i^k= L_i\}$, and $\bar\delta=[n]-\delta=\{i\;|\;l_i<v_i^k<L_i\}$.\\[1ex]
$\bullet$ If $\Sigma a \neq 0$, then we have
\begin{equation}
\left\{\begin{aligned}
&(1+\mu_k)d_\delta=(v^k-x^k)_\delta,\\
&(1+\mu_k)I_{\bar\delta}-P_{\bar\delta\times\bar\delta}R_{\bar\delta\times \bar\delta})d_{\bar\delta}=(v^k-x^k)_{\bar\delta}+P_{\bar\delta\times\bar\delta}R_{\bar\delta\times\delta}d_{\delta}.
\end{aligned}\right.
\end{equation}\vspace*{-0.15in}

\subsection{Numerical Results of CNFB for SLBQP}
Here we compare our algorithm CNFB for SLBQP with the following algorithms:

(1) Gurobi, a commercial software for QPs (which implements a highly optimized IPM);

(2) MATLAB's `quadprog' solver for QPs.

(3) QPPAL \cite{liang2022qppal}, a two-phase proximal augmented Lagrangian method for QPs.

(4) P2GP \cite{di2018two}, a two-phase gradient method specialized for the class of SLBQPs.

The positive semidefinite symmetric matrix \(Q\in\R^{n\times n}\) in our experiments is generated as \(Q=C^TC\), where \(C\in\R^{r\times n}\) ($r \le n$) is a random matrix with i.i.d. Gaussian entries. Thus the rank of $Q$ is expected to be $r$. The results are reported in
Table~\ref{table:SLBQP}. It can be observed that in the high rank case, P2GP is the most efficient algorithm followed by CNFB while Gurobi, MATLAB, QPPAL are less efficient. In the low-rank case, Gurobi has the best performance followed by MATLAB and CNFB, while P2GP and QPPAL are less efficient. Therefore, our proposed CNFB is the only algorithm that is competitive regardless of the rank of the matrix $Q$ in the objective function.\vspace*{-0.15in}

\begin{table}[ht]
\caption{Solving random SLBQPs}\label{table:SLBQP}
\begin{tabular}{llllllll}
\hline
rank&size&measure&CNFB&P2GP&MATLAB&Gurobi&QPPAL\\
\hline
\multirow{6}{*}{\(r=n\)}&\multirow{2}{*}{1000}&time &0.31s &0.24s &0.79s &0.72s &5.30s\\
&&residual &7.38e-13 &4.16e-09 &3.17e-08 &2.94e-06 &3.79e-06\\
&\multirow{2}{*}{2000}&time &0.91s &0.28s &4.70s &3.16s &16.06s\\
&&residual &2.25e-12 &9.86e-09 &6.12e-07 &2.39e-05 &7.11e-07\\
&\multirow{2}{*}{5000}&time &7.67s &1.55s &47.92s &39.98s &208.36s\\
&&residual &7.86e-12 &3.68e-10 &1.01e-06 &1.46e-04 &1.91e-04\\
\hline
\multirow{6}{*}{\(r=0.9n\)}&\multirow{2}{*}{1000}&time &0.26s &0.21s &0.78s &0.67s &5.64s\\
&&residual &8.39e-13 &5.43e-09 &7.40e-10 &8.54e-05 &1.27e-06\\
&\multirow{2}{*}{2000}&time &0.99s &0.35s &4.67s &3.18s &16.13s\\
&&residual &1.93e-12 &9.08e-09 &4.43e-06 &9.28e-05 &1.20e-06\\
&\multirow{2}{*}{5000}&time &9.20s &1.44s &50.11s &41.90s &424.25s\\
&&residual &7.90e-12 &3.62e-08 &1.30e-05 &8.97e-05 &1.73e-05\\
\hline
\multirow{6}{*}{\(r=0.5n\)}&\multirow{2}{*}{1000}&time &0.84s &0.55s &0.73s &0.43s &5.10s\\
&&residual &6.85e-13 &2.09e-09 &4.17e-06 &3.34e-05 &1.60e-05\\
&\multirow{2}{*}{2000}&time &4.56s &2.07s &4.18s &1.53s &24.89s\\
&&residual &1.92e-12 &8.58e-09 &1.64e-07 &7.23e-06 &3.32e-05\\
&\multirow{2}{*}{5000}&time &36.42s &12.80s &39.57s &13.40s &1010.14s\\
&&residual &6.34e-12 &2.58e-08 &1.45e-05 &1.42e-04 &1.97e-05\\
\hline
\multirow{6}{*}{\(r=0.1n\)}&\multirow{2}{*}{1000}&time &2.75s &17.10s &0.71s &0.16s &6.76s\\
&&residual &1.69e-09 &1.19e+00 &3.10e-09 &2.11e-11 &2.91e-04\\
&\multirow{2}{*}{2000}&time &8.76s &27.08s &5.20s &0.47s &70.24s\\
&&residual &4.22e-08 &2.77e+00 &2.70e-06 &2.43e-08 &1.58e-05\\
&\multirow{2}{*}{5000}&time &52.85s &120.59s &51.60s &3.07s &1710.12s\\
&&residual &1.18e-08 &1.86e-03 &1.22e-05 &2.13e-05 &3.63e-05\\
\hline
\end{tabular}
\end{table}

\section{Applications of CNAL to Lasso Problems}\label{sec_numerical}\vspace*{-0.05in}

This section presents some implementations of the proposed CNAL to the class of Lasso problems. We confine our attention to the basic Lasso problem introduced in \cite{tibshirani1996regression} as the \(\ell_1\)-{\em regularized least squares regression model} \cite{tibshirani1996regression}:
\begin{equation}\label{lasso}
\mbox{minimize }\;\vp(x):=\frac{1}{2}\|Ax-b\|^2_2+\lambda\|x\|_1\;\mbox{ over }\;x\in\R^n,
\end{equation}
where \(A\in\mathbb{R}^{m\times n}\) is the data matrix (with \(m\)  being the number of samples and \(n\)  being the number of features), \(b\in\mathbb{R}^m\), \(\lambda>0\), and \(\|\cdot\|_1\), \(\|\cdot\|_2\) are the \(\ell_1\)-norm and \(\ell_2\)-norm, respectively. We see that the Lasso problem \eqref{lasso} is a special case of \eqref{problem:primal} with $h(Ax)=\frac{1}{2}\|Ax\|^2$, $c=A^*b$, $p(x)=\lambda\|x\|_1$, $h^*(y)=\frac{1}{2}\|y\|^2$, and $p^*(z)=\delta_{B_{\infty}(\lambda)}$, where $B_{\infty}(\lambda):=\{z\in\R^n\;|\;\|z\|_{\infty}\le \lambda\}$ is the closed ball with radius $\lambda$ in the $\ell_\infty$-norm.

The proximal mapping of \(\sigma p\) is the {\em soft-thresholding operator}
\begin{equation*}
[\Prox_{\sigma p}(u)]_i=\mathrm{sign}(u_i)(|u_i|-\sigma\lambda)_{+} \,\,\,\text{for}\,\,\, i=1,\ldots,n,
\end{equation*}
where  \((|u_i|-\sigma)_{+}:=\max\{0,|u_i|-\sigma\}\).

In order to apply CNAL to the Lasso problem \eqref{lasso}, we need to compute the coderivative of $\Prox_{\sigma p}$. Since \(\Prox_{\sigma p}\) is separable and piecewise linear, its coderivative is easily calculated by
\begin{equation*}
\big(D^*\Prox_{\sigma p})\big)(u)(d)=\big\{w\in\R^n\;\big|\;w_i\in G_i(u_i,d_i)\big\},
\end{equation*}
where the set-valued mapping \(G:\R\times\R\rightrightarrows\R\) is defined by
\begin{equation*}
G_i(u_i,d_i)=\left\{\begin{aligned}
&d_i&\text{if\quad}&|u_i|>\sigma\lambda,\\
&\{0,d_i\}&\text{if\quad}&u_i=\sigma\lambda, d_i> 0,\\
&[0,d_i]&\text{if\quad}&u_i=\sigma\lambda, d_i\le0,\\
&0 &\text{if\quad}&|u_i|<\sigma\lambda,\\
&[0,d_i]&\text{if\quad}&u_i=-\sigma\lambda, d_i\ge 0,\\
&\{0,d_i\}&\text{if\quad}&u_i=-\sigma\lambda, d_i< 0.\\
\end{aligned}
\right.
\end{equation*}
We can directly extract a linear mapping from $(D^*\Prox_{\sigma p})(u)$ as follows: for all \(d\in\R^n\), take \(P d\subset D^*(\Prox_{\sigma p})(u)(d)\), where \(P\in\R^{n\times n}\) is a diagonal matrix with the entries
\begin{equation*}
P_{ii}:=\left\{\begin{aligned}
&1 &\text{\quad if \quad} &|u_i|>\sigma\lambda,\\
&0 &\text{\quad if \quad} &|u_i|\le\sigma\lambda.
\end{aligned}
\right.
\end{equation*}
Note that for the Lasso problem under consideration, the proposed CNAL is closely related to SSNAL designed to solve Lasso models in \cite{li2018efficiently}, because the Newton systems in both algorithms are identical in this case with the major difference in the linesearch strategy. CNAL uses the Wolfe
linesearch for the coderivative Newton subproblem solver GRNM-W while SSNAL employs the backtracking Armijo linesearch for the semismooth Newton subproblem solver as stated in \cite{li2018efficiently}. Let us point out that the actual MATLAB suite (called SuiteLasso) developed in \cite{li2018efficiently} uses a kind of Wolfe linesearch for SSNAL instead of the Armijo linesearch. In the numerical implementations, our codes are adapted from SuiteLasso, with the major change being a {\em different Wolfe linesearch} that appears to be more efficient, at least for random instances.\vspace*{-0.1in}

\subsection{Numerical Results of CNAL for Lasso}
Here we present the results of numerical experiments to compare our algorithm CNAL with the following well-known first-order and second-order algorithms to solve Lasso problems:

(1) The {\em Semismooth Newton Augmented Lagrangian Method} (SSNAL)\footnote{\url{https://github.com/MatOpt/SuiteLasso}} \cite{li2018highly}, one of the most efficient methods for Lasso.

(2) The {\em Matrix-Free Interior Point Method} (mfIPM)\footnote{\url{http://www.maths.ed.ac.uk/ERGO/mfipmcs/}} \cite{fountoulakis2014matrix}, an interior point method that is highly optimized for Lasso.

(3) The (Nesterov) {\em Accelerated Proximal Gradient Method} (APG) \cite{nesterov1983method} and the closely related in this case {\em Fast Iterative Shrinkage-Thresholing Algorithm} (FISTA)\footnote{We use the implementation in SLEP: \url{http://yelabs.net/software/SLEP/}} \cite{beck2009fast}, a simple and efficient first-order method.

(4) The {\em Alternating Direction Method of Multipliers} (ADMM)\footnote{We use the implementation at \url{https://web.stanford.edu/~boyd/papers/admm/lasso/lasso.html}} \cite{gabay1976dual,glowinski1975approximation}, a classical and popular primal-dual splitting method.

We tested these algorithms on the standard data set LIBSVM \cite{chang2011libsvm}. The parameter $\lambda$ in \eqref{lasso} is chosen to be $\lambda=\lambda_c\|A^Tb\|_\infty$ where $\lambda_c=10^{-3}$. The relative KKT residual
$$
\eta=\frac{\|\tilde x-\mathrm{Prox}_{\lambda\|\cdot\|_1}(\tilde x-A^T(A\tilde x-b))\|}{1+\|\tilde{x}\|+\|A^T(A\tilde{x}-b)\|}
$$
is used to measure the accuracy of an approximate minimizer $\tilde x$ of \eqref{lasso}. The results are reported in Table~\ref{table_lasso}. In the table, `NA' means `not applicable' (it appears because mfIPM is not applicable by design in the $m>n$ case). We can observe that CNAL is as efficient as SSNAL and both algorithms are highly accurate and much faster than the others.

\begin{table}[ht]
\caption{Solving Lasso problems in LIBSVM}\label{table_lasso}
\begin{tabular}{lllllll}
\hline
Problem&Measure&CNAL&SSNAL&mfIPM&APG&ADMM\\
\hline
\multirow{1}{*}{covtype}&time&0.01s&0.01s&NA&70.49s&11.95s\\
\multirow{1}{*}{581012;54}&residual&2.17e-07&2.18e-07&NA&2.75e-05&4.26e-04\\
\hline
\multirow{1}{*}{YearPredictionMSD}&time&0.01s&0.01s&NA&326.84s&55.99s\\
\multirow{1}{*}{463715;90}&residual&2.48e-07&3.74e-07&NA&5.66e-04&6.18e-04\\
\hline
E2006.test &time    &0.14s    &0.14s    &2.04s    &0.10s    &21.85s\\
3308;72812 &residual&8.25e-07 &1.60e-07 &4.62e-09 &2.97e-07 &4.68e-07\\
\hline
\multirow{1}{*}{E2006.train}&time&0.40s&0.38s&5.84s&0.36s&187.12s\\
\multirow{1}{*}{16087;150348}&residual&8.51e-07&1.65e-07&5.65e-09&2.94e-06&8.54e-08\\
\hline
news20\underline{ }tfidf\underline{ }test&time&0.34s&0.33s&14.71s&21.02s&60.41s\\
7532;49909&residual&5.02e-07&6.77e-07&1.69e-05&3.01e-06&8.03e-07\\
\hline
housing\underline{ }expanded7 &time     &1.91s    &1.93s    &249.77s    &114.20s    &104.52s\\
506;77520                     &residual &8.83e-07 &8.83e-07 &2.97e-01   &7.57e-04   &1.78e-04\\
\hline
pyrim\underline{ }expanded5   &time    &1.14s    &1.15s      &875.85   &63.90s   &156.27s\\
74;169911                     &residual&9.37e-07 &9.37e-07   &4.46e-07 &2.57e-03 &3.50e-04\\
\hline
\end{tabular}
\end{table}\vspace*{-0.1in}

\section{Conclusions}\label{sec_conclusions}\vspace*{-0.05in}

This paper proposes the globally convergent coderivative-based generalized regularized Newton method with the Wolfe linesearch (GRNM-W) to solve \(C^{1,1}\) optimization problems. The local convergence rate of GRNM-W is at least linear and becomes superlinear under the semismooth$^*$ property of the gradient mapping. We also presented a modified version of GRNM-W that is applicable to arbitrary nonconvex functions. Under the imposed KL properties of the objective function, we proved convergence and convergence rate results for the modified GRNM-W. We further combined the coderivative-based Newton method GRNM-W with the
forward-backward envelope and the augmented Lagrangian method, proposing the two algorithms CNFB and CNAL for solving convex composite minimization problems that are first-order nonsmooth and constrained. Numerical experiments on support vector machines (formulated as special quadratic programming problems) and the Lasso problem indicate that both CNFB and CNAL are efficient, which confirms the effectiveness of the main novel algorithmic scheme GRNM-W. Our future research includes further applications of the proposed nonsmooth Newton algorithms to a large variety of optimization problems arising in machine learning, data science, statistics, and other fields.\vspace*{0.05in}

{\bf Acknowledgments} Acknowledgments are not compulsory. Grant or contribution numbers may be acknowledged.
Please refer to journal-level guidance for any specific requirements.\vspace*{0.05in}

{\bf Data availability} The data that supports the findings of this study is available from the corresponding author upon request.\vspace*{0.05in}

{\bf Code availability} The code that supports the findings of this study is available from the corresponding author upon request.\vspace*{-0.2in}

\section*{Declarations}\vspace*{-0.05in}
{\bf Conflict of interest} The authors declare that they have no conflict of interest.

\end{document}